\numberwithin{equation}{section}
\newtheorem{thm}[equation]{Theorem} 
\newtheorem{claim}[equation]{Claim}
\newtheorem{prop}[equation]{Proposition}
\newtheorem{lemma}[equation]{Lemma}
\newtheorem{cor}[equation]{Corollary}
\theoremstyle{definition}
\newtheorem{question}[equation]{Question}
\theoremstyle{remark}
\newtheorem{example}[equation]{Example}
\newtheorem{rmk}[equation]{Remark}
\newtheorem{rmks}[equation]{Remarks}
\newcommand{\Q}{\mathbb Q}
\newcommand{\F}{\mathbb F}
\newcommand{\Z}{\mathbb Z}
\newcommand{\Spec}{\operatorname{Spec}}
\newcommand{\Char}{\operatorname{char}}
\newcommand{\Mat}{\operatorname{M}}
\newcommand{\GL}{\operatorname{GL}}
\newcommand{\G}{\mathbb G}
\renewcommand{\P}{\mathbb P}
\newcommand{\Sym}{\operatorname{S}}
\newcommand{\ed}{\operatorname{ed}}
\newcommand{\trdeg}{\operatorname{trdeg}}
\newcommand{\mc}[1]{\mathcal{#1}}
\newcommand{\cl}{\overline}
\newcommand{\set}[1]{\left\{#1\right\}}
\newcommand{\on}[1]{\operatorname{#1}}
\newcommand{\frp}{\mathfrak{p}}
\title{The behavior of essential dimension under specialization}
\author{Zinovy Reichstein}
\address{Department of Mathematics,
        University of British Columbia,
        Vancouver, BC V6T 1Z2, Canada}
\email{reichst@math.ubc.ca}
\author{Federico Scavia}
\address{Department of Mathematics,
        University of California,
        Los Angeles, CA 90095-1555, USA}
\email{scavia@math.ucla.edu}
\begin{document}



\maketitle

\begin{prelims}

\DisplayAbstractInEnglish

\bigskip

\DisplayKeyWords

\medskip

\DisplayMSCclass







\end{prelims}


\newpage

\setcounter{tocdepth}{1}

\tableofcontents


        \section{Introduction}
        
        Let $k$ be a field, $G$ be a linear algebraic group over $k$, $K/k$ be a field extension and $\tau \colon \mathcal{T} \to \Spec(K)$ be a $G$-torsor. We say that $\tau$ (or $\mathcal{T}$) descends to an intermediate subfield $k \subset K_0 \subset K$ if there exists a $G$-torsor $\tau_0 \colon \mathcal{T}_0 \to \Spec(K_0)$ such that $\tau_0$ is obtained from $\tau$ by a pull-back diagram
        \begin{equation} \label{e.compression}\begin{gathered}
        \xymatrix{   \mathcal{T} \ar@{->}[d]_{\tau}  \ar@{->}[r] & \mathcal{T}_0 \ar@{->}[d]^{\tau_0}   \\          
                               \Spec(K) \ar@{->}[r]    & \Spec(K_0) .} \end{gathered}
        \end{equation}
     The essential dimension $\ed_k(\tau)$ is the minimal transcendence degree $\trdeg_k(K_0)$ such that
     $\tau$ descends to $K_0$. 
    Essential dimension of
    torsors has been much studied; for an overview see~\cite{reichstein2010essential} or~\cite{merkurjev2013essential}.
    In this paper we will investigate how $\ed_k(\tau)$ behaves as we deform $\tau$. Our main theorem shows that 
    under relatively mild assumptions, $\ed_k(\tau)$ does not increase under specialization.  We will sometimes 
    write $\ed_k(\mathcal{T})$ or $\ed_k([\tau])$ in place of $\ed_k(\tau)$, where $[\tau]$ is the class of $\tau$ in $H^1(K, G)$. 
    
    Following \cite{brosnan2018essential}, we will refer to a finite group $S$ as being ``tame'' at a prime $p$ if $|S|$ 
    is not divisible by $p$ and ``weakly tame'' at $p$ if $S$ does not have a non-trivial normal $p$-subgroup. 
        By definition every finite group is tame at $p = 0$.
        
        \begin{thm}\label{puiseux}
                Let $A$ be a complete discrete valuation ring with maximal ideal $\mathfrak{m}$, fraction field $k$ and residue field $k_0$.
                Set $p := \on{char}(k_0) \geqslant 0$ and let $G$ be a smooth affine group scheme over $A$, satisfying 
                one of the conditions (i), (ii) or (iii) below. Let $R\supset A$ be a complete discrete valuation ring with fraction field $K\supset k$ and residue field $K_0\supset k_0$, and assume that $\mathfrak{m}$ is contained in the maximal ideal of $R$. Then for every $\alpha\in H^1(R,G)$ we have \[\ed_{k_0}(\alpha_{K_0})\leqslant \ed_{k}(\alpha_{K}).\]
                Furthermore, if $A=k_0[[t]]$ and $G_{A}$ is defined over $k_0$, then the above inequality is an equality.
                
                        \begin{enumerate}[label=(\roman*)]
                        \item $p = 0$, and there exist a section $\sigma:k_0\to A$ of the projection $A\to k_0$, and a $k_0$-group $H$ such that $G\simeq \sigma^*H$.
                        \item The neutral component $G^{\circ}$ is reductive, $G/G^{\circ}$ is $A$-finite, and there exists a finite subgroup $S\subset G(A)$ such that $S$ is tame at $p$ and
                        for every field $L$ containing $k$ the natural map $H^1(L,S)\to H^1(L,G)$ is surjective.
                        \item $G = S_{A}$, where $S$ is an abstract finite group which is weakly tame at $p$.
                \end{enumerate}
        \end{thm}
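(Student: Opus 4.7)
To establish the inequality $\ed_{k_0}(\alpha_{K_0})\leqslant \ed_{k}(\alpha_{K})$, set $d:=\ed_{k}(\alpha_K)$ and fix a descent of $\alpha_K$: a subfield $k\subset L\subset K$ with $\trdeg_k L=d$ together with a torsor $\beta\in H^1(L,G)$ satisfying $\beta_K=\alpha_K$. The plan is to convert $\beta$ into a descent of $\alpha_{K_0}$ over a subfield of $K_0$ of transcendence degree at most $d$ over $k_0$. Since $L\supset k=\on{Frac}(A)$ contains a uniformizer of $A$, which lies in the maximal ideal of $R$ by hypothesis, the valuation of $R$ restricts to a non-trivial rank-one valuation on $L$. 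Put $B:=R\cap L$, a valuation subring of $L$ containing $A$, with fraction field $L$ and residue field $L_0:=B/\mathfrak{m}_B$ canonically embedded in $K_0$ via the residue map of $R$. Abhyankar's inequality applied to this valuation extension yields $\trdeg_{k_0} L_0\leqslant \trdeg_k L=d$, so $L_0$ is a viable target for descent.

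The technical heart of the proof is to produce a torsor $\tilde\beta\in H^1(B,G)$ with $\tilde\beta_L=\beta$. Granting this, the residue class $\tilde\beta_{L_0}\in H^1(L_0,G)$ realises a descent of $\alpha_{K_0}$: from the commutative square
\begin{equation*}
\xymatrix{
H^1(B,G)\ar[r]\ar[d] & H^1(R,G)\ar[d]\\
H^1(L_0,G)\ar[r] & H^1(K_0,G)
}
\end{equation*}
one identifies $(\tilde\beta_{L_0})_{K_0}$ with the residue of $\tilde\beta_R$, and checks that $\tilde\beta_R$ and $\alpha$ produce the same class in $H^1(K_0,G)$ because they already agree over $K$. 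The extension problem for $\beta$ is handled case by case. In (iii), where $G=S_A$ with $S$ weakly tame, a structural result on tame extensions of $S$-torsors over henselian valuation rings applies: the absence of a non-trivial normal $p$-subgroup of $S$ forces the wild ramification to be resolvable by a tame auxiliary base change that preserves transcendence degrees. Case (ii) reduces to (iii) by lifting $\beta$ along the surjection $H^1(L,S)\twoheadrightarrow H^1(L,G)$; case (i) reduces to classical purity in characteristic zero, where the section $\sigma$ lets us treat $G$ as a constant $k_0$-group and extensions of torsors across $B$ are automatic since all covers are tame.

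For the equality when $A=k_0[[t]]$ and $G=G_{0,A}$, the opposite inequality is proved by reversing the construction. Given a descent $\beta_0\in H^1(L_0,G_0)$ of $\alpha_{K_0}$ with $\trdeg_{k_0}L_0=d$, Cohen's structure theorem lifts $L_0\subset K_0$ to a subfield of $R$ compatible with $k_0\subset R$, producing a sub-DVR $L_0[[t]]\subset R$ with fraction field $L_0((t))\subset K$ of transcendence degree $d$ over $k=k_0((t))$. Base-changing $\beta_0$ to $L_0((t))$ using $G=G_0\otimes_{k_0}A$ produces the sought descent of $\alpha_K$, once the previous direction is invoked to identify the two classes. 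The chief obstacle is the extension step in case (iii): the valuation ring $B$ need not be Noetherian, so the standard tame-cover extension results over henselian DVRs must be adapted to arbitrary rank-one valuation rings, and the most delicate part of the argument is to unify the three hypotheses through a single extension lemma for torsors of weakly tame finite groups over such rings.
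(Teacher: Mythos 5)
There is a genuine gap at what you yourself call the technical heart: the claim that one can produce $\tilde\beta\in H^1(B,G)$ with $\tilde\beta_L=\beta$. This is false in general. Although $\alpha$ is unramified over $R$, its descent $\beta$ to the field $L$ has no reason to be unramified along the valuation $v|_L$; for instance $\alpha_K$ could be split while $\beta$ is a ramified class that only splits after the (possibly ramified) base change $L\subset K$. Since $H^1(B,G)\to H^1(L,G)$ is injective but far from surjective, no torsor over $B$ restricts to such a $\beta$, and tameness does not help: a tamely ramified cover still does not extend to an \'etale cover of the valuation ring, so the assertion that in characteristic zero ``extensions of torsors across $B$ are automatic since all covers are tame'' is incorrect. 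The paper's proof of cases (i) and (ii) circumvents exactly this obstruction: it never extends $\beta$ itself, but instead passes to $\hat F(\pi^{1/\infty})$ (adjoining all roots of a uniformizer, after first arranging via Lemma \ref{e=1} that a uniformizer of $A$ uniformizes $R$), where Lemma \ref{grothendieck-serre}(d) shows $H^1(\hat O,G)\to H^1(\hat F(\pi^{1/\infty}),G)$ is \emph{bijective}. One then gets a class $\gamma\in H^1(\hat O,G)$ agreeing with $\beta$ only after this infinite ramified base change, and the identification $\gamma_R=\alpha$ (hence $\gamma_{K_0}=\alpha_{K_0}$) is recovered from the injectivity of $H^1(R,G)\to H^1(K(\pi^{1/\infty}),G)$. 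All the work in Lemmas \ref{big-diag} and \ref{grothendieck-serre} (the pro-$p$ kernel of $\on{sp}'$, its section, and the tameness of $S$) goes into this bijectivity, and none of it appears in your proposal.

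Two further points. First, your reduction of (ii) to (iii) by lifting along $H^1(L,S)\twoheadrightarrow H^1(L,G)$ cannot be right as stated: the paper explicitly records (Question \ref{q.tame}) that it does not know whether (ii) holds with $S$ only weakly tame, precisely because no such reduction is available; the lift of $\alpha_K$ to an $S$-torsor need not be defined over $R$, and its essential dimension as an $S$-torsor can exceed that of $\alpha_K$. The paper's case (iii) is instead a separate valuation-theoretic argument (Proposition \ref{prop1b}): one works directly with the \'etale algebra representing $\alpha$ over $R$, uses weak tameness to show the $G$-action on the residue field remains faithful, and descends at the level of residue fields without ever extending a torsor over a valuation ring. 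Second, for the reverse inequality when $A=k_0[[t]]$ and $G$ is defined over $k_0$, the paper needs only \Cref{lem.prel1}(a); your Cohen-theorem lifting of $L_0$ is unnecessary and itself requires identifying $(\alpha_{K_0})_K$ with $\alpha_K$, which you do not justify.
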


        The assumption that $A$ and $R$ are complete may be dropped; see \Cref{cor.not-complete}. In the case, where
        $G$ is a finite group and $p = 0$, the inequality $\ed_{k_0}(\alpha_{K_0})\leqslant \ed_{k}(\alpha_{K})$
        was noted in~\cite[Remark 6.3]{fakhruddin2021finite}.
        A version of \Cref{puiseux} for essential dimension at a prime will be proved in the Appendix; see \Cref{puiseux'}. 
        
        It is natural to ask whether or not conditions (ii) and (iii) can be replaced by a single (weaker) assumption. 

\begin{question} \label{q.tame}
Does~\Cref{puiseux} remain valid if the finite group $S$ in part (ii) is only assumed to be weakly tame, rather than tame?
\end{question}

            We have not been able to answer this question; our proof of~\Cref{puiseux} uses an entirely different argument in case (iii),
            compared to cases (i) and (ii). 
           Note however, that \Cref{puiseux}(ii) and (iii) both fail in the case where $p > 0$ and $G$ is a finite discrete $p$-group;
           see~\Cref{lem.versality3}. Some assumptions on $\on{char}(k_0)$ are thus necessary. Note also that~\Cref{q.tame} has a positive answer
           if essential dimension is replaced by essential dimension at a prime $q$, different from $p$; see~\Cref{puiseux'}.

We will give two applications of~\Cref{puiseux}. Additional applications of \Cref{puiseux} can be found in the companion paper~\cite{gabber3}.

For our first application, recall that the essential dimension $\ed_k(G)$ of an algebraic group $G$ defined 
over $k$ is the supremum of $\ed_k(\alpha)$, as $K$ ranges over field extensions of $k$ and $\alpha$ ranges over $H^1(K, G)$. It is shown in \cite{brosnan2018essential} that if
$G$ is an abstract finite group which is weakly tame at a prime $p > 0$, then 
\begin{equation} \label{e.brv}
\ed_k(G) \geqslant \ed_{k_0} (G), 
\end{equation}
where $k$ is any field of characteristic $0$ and $k_0$ is any field of characteristic $p$ containing the algebraic closure of the prime field $\F_p$. \Cref{thm.split-red} below partially 
extends this inequality to split reductive groups.~\footnote{For a recent generalization of the inequality~\eqref{e.brv} in a different (stack-theoretic) direction, see~\cite{bresciani-vistoli}.}
        
        \begin{thm}\label{thm.split-red}
                Let $G$ be a split reductive group scheme (not necessarily connected) of rank $r \geqslant 0$ defined over $\Z$. Denote the Weyl group of $G$ by $W$. 
                Then \[ \ed_k(G_k)\geqslant\ed_{k_0}(G_{k_0}) \]
                for any field $k$ of characteristic zero and any field $k_0$ of characteristic $p > 0$, as long $p$ does not divide $2^r |W|$
                and $k_0$ contains the algebraic closure of $\F_p$. 
        \end{thm}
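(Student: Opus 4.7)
The plan is to deduce Theorem~\ref{thm.split-red} from Theorem~\ref{puiseux}(ii) by taking the finite subgroup $S$ in that theorem to be an appropriate extended Weyl group (Tits group) $\widetilde{W} \subset G(\Z)$. For a split reductive $G$ of rank $r$ with split maximal torus $T$, this is the subgroup of $N_G(T)(\Z)$ generated by the canonical lifts $n_\alpha$ of the simple reflections (together with lifts of $G/G^\circ$ when $G$ is disconnected). The order $|\widetilde{W}|$ divides $2^r|W|$, so the hypothesis $p \nmid 2^r|W|$ makes $\widetilde{W}$ tame at $p$.

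The crucial input is the surjectivity of $H^1(L, \widetilde{W}) \to H^1(L, G_L)$ for every field $L$. I would factor this through the normalizer $N = N_G(T)$: the classical Borel--Serre--Steinberg reduction gives that $H^1(L, N) \to H^1(L, G_L)$ is surjective (every torsor under a connected reductive group admits a reduction to the normalizer of a maximal torus, and this extends to the disconnected case using the component group), and the short exact sequence $1 \to \widetilde{W} \to N \to T/T[2] \to 1$ together with Hilbert 90 for the split torus $T/T[2]$ gives surjectivity of $H^1(L, \widetilde{W}) \to H^1(L, N)$.

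I would then apply Theorem~\ref{puiseux}(ii) via a versal-lifting argument. By the monotonicity of essential dimension in the base field, I may reduce to $k_0 = \overline{\F}_p$. Choose a field $K_0 \supset \overline{\F}_p$ and a $G$-torsor $\beta \in H^1(K_0, G)$ with $\ed_{\overline{\F}_p}(\beta) = \ed_{\overline{\F}_p}(G_{\overline{\F}_p})$. Lift $\beta$ to $\gamma_0 \in H^1(K_0, \widetilde{W})$ using the surjection; since $|\widetilde{W}|$ is coprime to $p$, Hensel's lemma lifts $\gamma_0$ uniquely to $\gamma \in H^1(R, \widetilde{W})$, where $R$ is a complete mixed-characteristic DVR with residue field $K_0$ extending $A = W(\overline{\F}_p)$ (e.g., the Cohen ring $R = C(K_0)$). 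Pushforward along $\widetilde{W} \hookrightarrow G$ gives $\alpha \in H^1(R, G)$ with $\alpha_{K_0} = \beta$, and Theorem~\ref{puiseux}(ii) then yields
\[
\ed_{\overline{\F}_p}(G_{\overline{\F}_p}) = \ed_{\overline{\F}_p}(\alpha_{K_0}) \leq \ed_{F_0}(\alpha_K) \leq \ed_{F_0}(G_{F_0}),
\]
where $F_0 = \mathrm{Frac}(A)$ and $K = \mathrm{Frac}(R)$.

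Finally, to pass from $\ed_{F_0}(G_{F_0})$ to $\ed_k(G_k)$ for an arbitrary characteristic zero field $k$, I would choose $R$ adaptively so that its fraction field $K$ contains $k$, and then combine Theorem~\ref{puiseux}(ii) with monotonicity of essential dimension. The main obstacle I anticipate is this base-change step: for an arbitrary char~$0$ field $k$, arranging a mixed-characteristic DVR whose fraction field sees $k$ and whose residue field still contains $\overline{\F}_p$ requires some set-theoretic and valuation-theoretic care. A secondary obstacle is the cohomological surjectivity step, particularly reconciling the Tits construction with the disconnected component group $G/G^\circ$.
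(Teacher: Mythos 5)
Your overall strategy --- feed \Cref{puiseux}(ii) a tame finite subgroup of the normalizer $N$ of a split maximal torus --- is the right one and matches the paper's, but both of the steps you flag as ``obstacles'' are genuine gaps, and your proposed resolutions of them do not work. First, the surjectivity of $H^1(L,\widetilde{W})\to H^1(L,N)$ does \emph{not} follow from Hilbert~90 applied to the split torus $N/\widetilde{W}\simeq T/T[2]$. In non-abelian cohomology, surjectivity at this spot requires the vanishing of $H^1(L,{}^{c}(T/T[2]))$ for \emph{every} cocycle $c$ valued in $N$; these twists are tori twisted through the Weyl-group action, hence generally non-split, and their $H^1$ is generally nonzero (norm-one tori already give counterexamples). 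This is precisely why the reduction-of-structure theorem of Chernousov--Gille--Reichstein, invoked in \Cref{red-structure-split}, uses a finite subgroup containing all of $T[2n]$ with $n=|W|$ --- of degree dividing $(2n)^r\cdot 2^r n$, not $2^r|W|$. Whether the Tits group itself suffices is not known. Fortunately the primes dividing the order of the larger group still all divide $2|W|$, so tameness survives; but you must use that group, and its constancy only over $\Z[1/2^r,\zeta_{2^rn}]$ is what forces the root of unity into the hypotheses.

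Second, and more seriously, your argument runs in the wrong direction. Lifting a torsor from $K_0\supset\overline{\F}_p$ to characteristic zero yields $\ed_{\overline{\F}_p}(G)\leqslant\ed_{F_0}(G_{F_0})$ with $F_0=\operatorname{Frac}(W(\overline{\F}_p))$, but there is no way to compare $\ed_{F_0}(G_{F_0})$ with $\ed_k(G_k)$ for an arbitrary characteristic-zero $k$: monotonicity (\Cref{lem.prel2}(c)) gives $\ed_{F_0}(G)\geqslant\ed_{\overline{\Q}}(G)$ and $\ed_k(G)\geqslant\ed_{\overline{\Q}}(G)$, both inequalities pointing the wrong way, and ``choosing $R$ adaptively'' cannot help --- for $k=\overline{\Q}$ no fraction field of a mixed-characteristic DVR with residue field containing $\overline{\F}_p$ embeds into $k$. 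The paper avoids this by specializing rather than lifting: it first reduces to $k=\overline{\Q}$ and descends to a number field $L\ni\zeta_{2^rn}$ with $\ed_L(G)=\ed_{\overline{\Q}}(G)$ (\Cref{lem.prel2}(d),(e)), takes $A$ to be the completion of $\mathcal{O}_L$ at a prime over $p$, spreads a versal torsor $X\to B$ out over $A$ (\Cref{cor.versal}), and applies \Cref{puiseux}(ii) at the completed local ring of $B_A$ along the special fibre to get $\ed_F(G_F)\geqslant\ed_f(G_f)\geqslant\ed_{k_0}(G_{k_0})$. The descent to $L$ is the step that closes the characteristic-zero side of the comparison, and it has no counterpart in your lifting argument.
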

        
Note that in the case where $G$ is a finite constant group (and thus $r = 0$ and $W = G$), we only recover Inequality~\eqref{e.brv} 
in the case where $G$ is tame at $p$. A positive answer to \Cref{q.tame} would imply that \Cref{thm.split-red} remains valid when $W$ 
is only assumed to be weakly tame at $p = \Char(k_0)$, as long as $p$ is odd. 

For our second application of \Cref{puiseux}, we briefly recall the definition of essential dimension of a $G$-variety. 
Once again, let $G$ be an algebraic group defined over $k$. By a $G$-variety we shall mean a separated reduced $k$-scheme of finite type endowed with a $G$-action over $k$. We will say that the $G$-variety $Y$ is primitive if $Y \neq \emptyset$ and $G(\cl{k})$ transitively permutes the irreducible components of $Y_{\cl{k}}:=Y \times_k\cl{k}$. We will say that the $G$-variety $Y$ (or equivalently, the $G$-action on $Y$) is generically free
if there exists a dense open subscheme $U \subset Y$ such that for every $u\in U$ the scheme-theoretic stabilizer $G_u$ of $u$ is trivial. 

Now suppose that $Y$ is a generically free primitive $G$-variety defined over $k$. By a $G$-compression of $Y$ we will mean 
a dominant $G$-equivariant rational map $Y \dashrightarrow X$, defined over $k$,
where the $G$-action on $X$ is again generically free and primitive. The essential dimension of $Y$, denoted by $\ed_k(Y;G)$, 
is defined as the minimal value of $\dim(X) - \dim(G)$, where the minimum is taken over all $G$-compressions $Y \dashrightarrow X$.
This notion is closely related to that of essential dimension of a torsor; see~\Cref{sect.ed-variety}. If the reference to $G$ is clear from the context, we will often write $\ed_k(Y)$ in place of $\ed_k(Y;G)$.
        
        \begin{thm} \label{thm.map} Let $k$ be a field of characteristic $p\geqslant 0$,  $G$ be a linear algebraic group defined over $k$, satisfying one of the conditions (1) -- (4) below.
        Let $X$, $Y$ be primitive generically free $G$-varieties defined over $k$. Assume that $X$ is 
        smooth.         If there exists a $G$-equivariant rational map $f \colon Y \dasharrow X$, then $\ed_k(X) \geqslant \ed_k(Y)$.
                
        \begin{enumerate}
            \item $p=0$.
            \item $p > 0$, $G$ is split reductive, 
            and $k$ contains a primitive root of unity of degree $2 n^2$.
            \item $p >0$, $G$ is connected reductive, 
            and there exists a maximal torus $T$ of $G$ such that
            $T[n^2]$ is discrete.
            \item $G$ is a finite discrete group, weakly tame at $p$.
        \end{enumerate} 

\noindent
        Here in parts (2) and (3), $n = |W|$ denotes the order of the Weyl group $W$ of $G_{\overline{k}}$. 
        \end{thm}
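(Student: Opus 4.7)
The plan is to translate the conclusion of \Cref{thm.map} into an inequality of essential dimensions of generic torsors, and then apply \Cref{puiseux}. Write $Y^{\circ}\subset Y$ for the open locus where $G$ acts freely, and likewise $X^{\circ}\subset X$; the quotients $Y^{\circ}/G$ and $X^{\circ}/G$ carry generic $G$-torsors whose classes $\eta_Y\in H^1(k(Y)^G,G)$ and $\eta_X\in H^1(k(X)^G,G)$ compute $\ed_k(Y;G)$ and $\ed_k(X;G)$ respectively, as recorded in \Cref{sect.ed-variety}. It therefore suffices to show $\ed_k(\eta_Y)\leqslant \ed_k(\eta_X)$.

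If $f$ is dominant, pullback of functions induces an inclusion $k(X)^G\hookrightarrow k(Y)^G$, and $\eta_Y$ is the base change of $\eta_X$ along this inclusion; any descent of $\eta_X$ is automatically a descent of $\eta_Y$, so $\ed_k(\eta_Y)\leqslant \ed_k(\eta_X)$. In general, set $Z:=\cl{f(Y)}\subset X$, a $G$-invariant primitive closed subvariety that need not be generically free. To remedy this, first replace $(X,Y,f)$ with $(X\times V,Y\times V,f\times \id_V)$ for some generically free $G$-representation $V$: essential dimensions are unchanged by the no-name lemma, and the new image $Z\times V$ has generically free $G$-action because $V$ does. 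The dominant case applied to the now surjective rational map $Y\dasharrow Z$ gives $\ed_k(\eta_Y)\leqslant \ed_k(\eta_Z)$, reducing the problem to proving $\ed_k(\eta_Z)\leqslant \ed_k(\eta_X)$.

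For this remaining step, I would exploit the smoothness of $X$ and apply \Cref{puiseux} repeatedly. Since $G$ is smooth under each of the hypotheses (1)--(4) and acts freely on $X^{\circ}$, the quotient $X^{\circ}/G$ is smooth, and $Z^{\circ}/G$ sits inside as a closed subvariety of codimension $c$ for some $c\geqslant 1$. A regular system of parameters in $\mc O_{X^{\circ}/G,\eta}$ at the generic point $\eta$ of $Z^{\circ}/G$ yields a chain of closed subvarieties $Z^{\circ}/G=W_c\subset W_{c-1}\subset\cdots\subset W_0=X^{\circ}/G$, each smooth at $\eta$ with $W_i$ of codimension one in $W_{i-1}$. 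The local ring $R_i:=\mc O_{W_{i-1},\eta_{W_i}}$ is a DVR with fraction field $k(W_{i-1})$ and residue field $k(W_i)$, and the pull-back of the $G$-torsor $X^{\circ}\to X^{\circ}/G$ along $\Spec R_i\to X^{\circ}/G$ defines a class $\alpha_i\in H^1(R_i,G)$. Applying \Cref{puiseux} (in the non-complete form of \Cref{cor.not-complete}) to each $R_i$ and composing the $c$ resulting inequalities produces $\ed_k(\eta_Z)\leqslant \ed_k(\eta_X)$.

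The main obstacle, I expect, is verifying the hypotheses of \Cref{puiseux} in each of the four cases. Cases (1) and (4) match conditions (i) and (iii) of \Cref{puiseux} directly. For cases (2) and (3), one has to exhibit a finite subgroup $S\subset G$ that is tame at $p$ and for which $H^1(L,S)\to H^1(L,G)$ is surjective for every field $L\supset k$; this $S$ is extracted from the normalizer of a maximal torus of $G$, using the assumed presence of $2n^2$-th roots of unity in $k$ in case (2) and the assumed discreteness of the $n^2$-torsion of the torus in case (3), which together ensure that a suitable finite tame slice of the normalizer surjects cohomologically onto $G$.
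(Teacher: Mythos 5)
Your route is genuinely different from the paper's. The paper never passes to the image of $f$: it reads the existence of $f$ as saying that $X$ is weakly $\tau_Y$-versal, base-changes to the large field $K((t))$ where (by smoothness of $X$) weak versality upgrades to versality via \Cref{lem.versality1}(b), applies \Cref{lem.versality2}, and then uses \Cref{puiseux} once to descend from $k((t))$ back to $k$. Your chain-of-divisors specialization through $\cl{f(Y)}$ is instead the strategy the paper uses in Appendix~B for cohomological invariants, and while it can plausibly be pushed through for essential dimension, as written it has a genuine gap at the ``no-name'' step. You need the inequality $\ed_k(Y\times V)\geqslant \ed_k(Y)$, equivalently $\ed_k\bigl((\tau_Y)_{K(t_1,\ldots,t_N)}\bigr)\geqslant \ed_k(\tau_Y)$ with $K=k(Y)^G$; only the reverse inequality is formal (\Cref{lem.prel1}(a)). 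The direction you need is itself a specialization statement of exactly the kind the theorem is about: it is a special case of \Cref{cor.product} (with the smooth factor equal to $V$), which the paper \emph{deduces from} \Cref{thm.map}, and whose analogue fails for wild $p$-groups by \Cref{lem.versality3}(c). So it cannot be invoked as standard; it requires its own argument (for instance, the same chain of DVRs applied to $\A^N_K$, specializing the coordinates to $0$ one at a time).

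A second, more technical mismatch: \Cref{puiseux} and \Cref{cor.not-complete} compare $\ed_{k_0}$ of the special fibre with $\ed_k$ of the generic fibre, where $k$ and $k_0$ are the fraction and residue fields of a base DVR $A\subset R$. In your chain the base field $k$ is fixed on both sides and sits inside $R_i=\mc{O}_{W_{i-1},\eta_{W_i}}$ as a field of constants, not as the fraction field of a sub-DVR, so the theorem does not apply verbatim. One must embed $A=k[[t]]$ into $\hat{R}_i$ by sending $t$ to a uniformizer, deduce $\ed_k(\alpha|_{k(W_i)})\leqslant \ed_{k((t))}(\alpha_{\hat{F}_i})\leqslant \ed_k(\alpha|_{k(W_{i-1})})$ (with $\hat{F}_i$ the fraction field of $\hat{R}_i$) using \Cref{lem.prel1}(a) and (b), and verify conditions (i)--(iii) of \Cref{puiseux} for $G_{k[[t]]}$ --- this last point is exactly \Cref{red-str-conn-red}, and your sketch for cases (2) and (3) is consistent with \Cref{red-structure-split}, except that the cohomological surjectivity must be checked for fields containing $k((t))$, not just $k$. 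Both gaps are repairable with tools already in the paper, but they are doing real work and must be addressed.
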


If $f$ is dominant, then \Cref{thm.map} is obvious from the definition, since any compression of $X$ can be composed with $f$.
The key point here is that $f$ is allowed to be arbitrary. In particular, we do not assume that the $G$-action on
the image of $f$ is generically free. The idea of the proof is to use~\Cref{puiseux} to deform $f$.
The assumption that $X$ be smooth may not be dropped; see \Cref{rem.smoothness}.

\subsection*{Acknowledgements} We are grateful to the referee for constructive comments and to Angelo Vistoli for his help with Lemma~\ref{large-algebraic-space}. The second-named author thanks K{\fontencoding{T1}\selectfont\k{e}}stutis \v{C}esnavi\v{c}ius and the D\'epartement de Math\'ematiques d'Orsay (Universit\'e Paris-Saclay) for hospitality during Summer 2021, and the Institut des Hautes \'Etudes Scientifiques for hospitality in the Fall 2021.

\section{Preliminaries}\label{prelim}

\subsection{Dependence on the base field}
\label{sect.base-field}

\begin{lemma} \label{lem.prel1}
Let $G$ be a linear algebraic group over $k$, $K/k$ be a field extension, and $\tau \colon \mathcal{T} \to \Spec(K)$ be a $G$-torsor.

\begin{enumerate}[label=(\alph*)]
\item If $K'/K$ is a field extension, then 
$\ed_k(\tau) \geqslant \ed_k(\tau_{K'})$. Here $\tau_{K'} \colon \mathcal{T}_{K'} \to \Spec(K')$ is the $G$-torsor obtained from $\tau$ 
by base-change via the natural map $\Spec(K') \to \Spec(K)$.
\item If $k \subset l\subset K$, then $\ed_{k}(\tau) \geqslant \ed_{l}(\tau)$.
\end{enumerate}
\end{lemma}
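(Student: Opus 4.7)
For part (a), the plan is to observe that any intermediate field of definition for $\tau$ is automatically a field of definition for $\tau_{K'}$. More precisely, suppose $\tau$ descends to an intermediate field $k \subset K_0 \subset K$ with $\trdeg_k(K_0) = \ed_k(\tau)$, so that there is a $G$-torsor $\tau_0 \colon \mathcal{T}_0 \to \Spec(K_0)$ fitting into the pullback square~\eqref{e.compression}. Composing the right-hand arrow of that diagram with the base-change map $\Spec(K') \to \Spec(K)$ yields a pullback square exhibiting $\tau_{K'}$ as descended from the same $\tau_0$ over $K_0$, via the composite $\Spec(K') \to \Spec(K) \to \Spec(K_0)$. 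Since $K_0$ still sits inside $K'$ (because $k \subset K_0 \subset K \subset K'$), this shows $\ed_k(\tau_{K'}) \leqslant \trdeg_k(K_0) = \ed_k(\tau)$.

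For part (b), the idea is to enlarge a field of definition over $k$ by composing with $l$ to obtain a field of definition over $l$, and to control the transcendence degree along the way. Choose an intermediate field $k \subset K_0 \subset K$ of minimal transcendence degree over which $\tau$ descends; write $\tau_0$ for the descended torsor over $K_0$. Inside $K$ form the compositum $L_0 := l \cdot K_0$. Then $l \subset L_0 \subset K$, and pulling $\tau_0$ back along $\Spec(L_0) \to \Spec(K_0)$ produces a descent of $\tau$ to $L_0$, again by composing pullback squares. It remains to check the transcendence-degree bound $\trdeg_l(L_0) \leqslant \trdeg_k(K_0)$: any transcendence basis $x_1, \dots, x_n$ of $K_0$ over $k$ generates a purely transcendental subextension over which $K_0$ is algebraic, so $L_0 = l(x_1, \dots, x_n, K_0)$ is algebraic over $l(x_1, \dots, x_n)$, giving $\trdeg_l(L_0) \leqslant n = \trdeg_k(K_0) = \ed_k(\tau)$. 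Taking the infimum over such $K_0$ yields $\ed_l(\tau) \leqslant \ed_k(\tau)$.

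The whole argument is essentially bookkeeping about pullback squares together with the standard inequality $\trdeg_l(l \cdot K_0) \leqslant \trdeg_k(K_0)$; there is no real obstacle, and no hypothesis on $G$ (beyond being a linear algebraic group over $k$, so that base change of torsors is defined) enters the proof.
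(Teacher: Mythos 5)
Your proof is correct and follows essentially the same route as the paper: part (a) by composing the minimal pullback square with the base-change map $\Spec(K')\to\Spec(K)$, and part (b) by descending to the compositum $l\cdot K_0$ inside $K$ and using $\trdeg_l(l\cdot K_0)\leqslant\trdeg_k(K_0)$. The only difference is that you spell out the transcendence-degree inequality, which the paper takes as standard.
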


\begin{proof}  Consider Diagram~\eqref{e.compression} with smallest possible value of $\trdeg_k(K_0)$, that is,
$\trdeg_k(K_0) = \ed_k(\tau)$. 

(a) Composing with the natural projection $\mathcal{T}_{K'} \to \mathcal{T}$, we obtain a Cartesian diagram of $G$-torsors
\[ \xymatrix{  \mathcal{T}_{K'} \ar@{->}[d]_{\tau_{K'}}  \ar@{->}[r] &  \mathcal{T} \ar@{->}[d]^{\tau}  \ar@{->}[r] & \mathcal{T}_0 \ar@{->}[d]^{\tau_0}   \\          
                        \Spec(K') \ar@{->}[r]     &        \Spec(K) \ar@{->}[r]     & \Spec(K_0) ,} \]
which shows that $\ed_k(\tau_{K'}) \leqslant \trdeg_k(K_0) = \ed_k(\tau)$.  

(b) Choose an intermediate field $k \subset K_0 \subset K$ such that $\trdeg_{k}(K_0) = \ed_{k}(\tau)$.
Let $K_1$ be the subfield of $K$ generated by $l$ and $K_0$. Then $\tau$ also descends to $K_1$. Thus
\[ \ed_l(\tau) \leqslant \trdeg_l(K_1) \leqslant \trdeg_{k}(K_0) = \ed_{k}(\tau) .\qedhere \]
\end{proof} 

\subsection{Essential dimension of a $G$-variety}
\label{sect.ed-variety}

Let $G$ be an algebraic group defined over $k$ and let $Y$ be a generically free primitive $G$-variety. These terms 
are defined in the paragraph preceding \Cref{thm.map}, where one can also find the definition of a compression
$Y \dasharrow X$ and of $\ed_k(Y)$.

\begin{lemma} \label{lem.prel1.5} Let $Y$ be a generically free primitive $G$-variety defined over $k$, and $k'/k$ be a field extension.
Then 
\begin{enumerate}[label=(\alph*)]
\item $\ed_k(Y) \geqslant \ed_{k'} (Y_{k'})$.
\item There exists an intermediate field $k \subset l \subset k'$ such that $l$ is finitely generated over $k$ and $\ed_l(Y_l) = \ed_{k'}(Y_{k'})$.
\item If $k$ is algebraically closed, then $\ed_k(Y) = \ed_{k'} (Y_{k'})$.
\end{enumerate}
\end{lemma}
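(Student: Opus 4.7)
The plan is to prove the three parts in order, using (a) in the proofs of both (b) and (c).

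For part (a), I would start with a $G$-compression $\varphi \colon Y \dashrightarrow X$ over $k$ realizing $\ed_k(Y) = \dim(X) - \dim(G)$. Its base change $\varphi_{k'} \colon Y_{k'} \dashrightarrow X_{k'}$ is automatically dominant and $G$-equivariant, so I need only check that $X_{k'}$ remains primitive and that the $G$-action on it remains generically free. Generic freeness is immediate from base-changing an open subscheme of $X$ with trivial stabilizers. For primitivity I use that the tensor product of two field extensions of an algebraically closed field is a domain, so that the irreducible components of $X_{\cl{k'}}$ are precisely the base changes of those of $X_{\cl{k}}$; the transitive action of $G(\cl{k}) \subset G(\cl{k'})$ on the former then yields a transitive action of $G(\cl{k'})$ on the latter. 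Hence $\ed_{k'}(Y_{k'}) \leqslant \dim(X_{k'}) - \dim(G) = \ed_k(Y)$.

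For part (b), I would choose a compression $Y_{k'} \dashrightarrow X'$ over $k'$ realizing $\ed_{k'}(Y_{k'})$. The $k'$-variety $X'$, its $G$-action, the rational map, and an open subscheme witnessing generic freeness are all of finite presentation, so they descend to some finitely generated sub-extension $k \subset l \subset k'$ by standard limit arguments for finitely presented schemes. Possibly enlarging $l$ inside $k'$, the descended data give a $G$-compression $Y_l \dashrightarrow X_l$: primitivity of $X_l$ follows from primitivity of $X' = (X_l)_{k'}$ via the same bijection argument as in (a), combined with the fact that the transporter subschemes in $G$ permuting the components of $(X_l)_{\cl{l}}$ are defined over $\cl{l}$ and become non-empty after base change to $\cl{k'}$, hence are already non-empty by the Nullstellensatz. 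Thus $\ed_l(Y_l) \leqslant \ed_{k'}(Y_{k'})$, and the reverse inequality follows from (a).

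For part (c), I would combine (b) with specialization. By (b) I may assume $k'/k$ is finitely generated; since $k = \cl{k}$, write $k' = k(V)$ for an irreducible $k$-variety $V$. Take a compression $Y_{k'} \dashrightarrow X'$ over $k'$ realizing $\ed_{k'}(Y_{k'})$, and spread all data out to a family $\pi \colon \mc{X} \to U$ over a dense open $U \subset V$, with generic fiber $X'$, together with a spread-out $G$-action and rational map $Y \times_k U \dashrightarrow \mc{X}$. After shrinking $U$, I may assume that $\pi$ is flat (generic flatness), that the number of geometric irreducible components of the fibers equals the generic value (constructibility), and that every fiber is generically free (openness of the free locus). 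Then pick $v \in U(k)$, which exists since $k$ is algebraically closed. The fiber $X_v$ is a primitive, generically free $G$-variety over $k$ of dimension $\dim(X')$, producing a $G$-compression $Y \dashrightarrow X_v$ over $k$ with $\ed_k(Y) \leqslant \ed_{k'}(Y_{k'})$; combined with (a), this gives equality. The main obstacle throughout is the preservation of primitivity: in (a) and (b) it reduces to a bijection of components under extension of algebraically closed fields together with a Nullstellensatz argument for transporter subschemes; in (c) the delicate point is that the transitive $G(\cl{l})$-action on components of $X'_{\cl{l}}$ must specialize to a transitive $G(k)$-action on components of $X_v$, which I would verify by passing to the Stein factorization of $\pi$ (finite étale over $U$ after shrinking) and using that its fibers over $k$-points decompose into $k$-rational points on which the descended $G$-action must inherit the generic orbit structure.
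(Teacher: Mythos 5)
Your proposal is correct and follows essentially the same route as the paper: part (a) by base-changing an optimal compression, part (b) by a spreading-out/limit argument over a finitely generated subfield, and part (c) by spreading the compression out over a $k$-variety $U$ with function field $k'$ and specializing at a general $k$-rational point of $U$. The extra care you take with the preservation of primitivity (via transporter subschemes and the component scheme of the family) fills in details the paper leaves implicit, but does not change the argument.
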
 

Note that if $Y$ is a generically free primitive $G$-variety, then $Y_{k'}$ is a generically free primitive $G_{k'}$-variety.

\begin{proof} (a) The inequality $\ed_k(Y) \geqslant \ed_{k'} (Y_{k'})$ follows directly from the definition of essential dimension, since
every $G$-compression $Y \dasharrow X$ gives rise to a $G_{k'}$-compression $Y_{k'} \dasharrow X_{k'}$ by base-change. 

(b) By (a), it suffices to find $l$ so that $\ed_l(Y_l) \leqslant \ed_{k'}(Y_{k'})$. Let
$f' \colon Y_{k'} \dashrightarrow X'$ be a $G_{k'}$-compression over $k'$ such that $X'$ is a generically free primitive $G_{k'}$-variety and $\dim_{k'}(X')= \ed_{k'}(Y_{k'})$.  
Then there exists an intermediate field $k \subset l \subset k'$ such that $l$ is finitely generated over $k$ and $f'$ descends to a $G_l$-compression $f \colon Y_l \to X$, where $X$ is a generically free $G_l$-variety and $X$ is a generically free primitive $G_l$. Therefore
\[\ed_l(Y_l)\leqslant \dim_l(X)=\dim_{k'}(X')=\ed_{k'}(Y_{k'}),\]
as desired. 

(c) In view of part (b), we may assume without loss of generality that $k'$ is finitely generated over $k$. By part (a), it suffices to show that 
$\ed_k(Y) \leqslant \ed_{k'} (Y_{k'})$.
Let $f \colon Y_{k'}\dasharrow X$ be a $G$-compression over $l$ such that $\dim(X)=\dim (G)+\ed_{k'}(Y_{k'})$.
Then $f$ is actually defined over some $k$-variety $U$ whose function field is $k'$:
\[
                \xymatrix{Y \times_k U \; \ar@{->}[dr]_{{\rm pr}_2} \ar@{-->}[rr]^{F} &  & \mathcal{X} \ar@{->}[dl]^{\pi}    \cr
                         & U .&   } 
 \]
In other words, there exists a surjective $G$-equivariant morphism 
$\pi \colon \mathcal{X} \to U$ whose generic fiber is $X$, and a $G$-equivariant dominant rational map
$F \colon Y \times_k U \dasharrow \mathcal{X}$ over $U$ 
whose generic fiber is $f$. Since $k$ is algebraically closed, $U(k)$ is dense in $U$. 
If $u \in U(k)$ is a $k$-point in general position in $U$, then the morphism $F_u \colon Y \dasharrow \mathcal{X}_u$ is a $G$-compression of $Y$, 
and $\dim(\mathcal{X}_u)=\dim (X)$. We conclude that \[\ed_k(Y) \leqslant \dim(\mathcal{X}_u)-\dim(G)=\dim (X) -\dim G = \ed_{k'}(Y_{k'}), \] 
as desired.
\end{proof}

    Recall that in the Introduction we defined the essential dimension for both $G$-torsors and $G$-varieties.
    These two notions are closely related 
    in the following way. Let $Y$ be a generically free primitive
    $G$-variety. After passing to a $G$-invariant open subvariety of $Y$, we may assume that $Y$ is the total space 
    of a $G$-torsor $\tau \colon Y \to B$, where
        $B$ is irreducible with function field $K = k(B) = k(Y)^G$; see~\cite[Theorem 4.7]{berhuy2003essential}.
        Note that $k(Y)$ is a field if $Y$ is irreducible and a direct product of fields in general; however, $K$ is always a field,
        as long as $Y$ is primitive.
        Pulling back to the generic point $\Spec(K) \to B$ of $B$, we obtain a $G$-torsor $\tau_Y \colon \mathcal{T}_Y \to \Spec(K)$.
        Now an easy spreading out argument shows that  
        \begin{equation} \label{e.torsor-variety} \ed_k(Y) = \ed_k(\tau_Y); 
        \end{equation}
        see~\cite[Lemma~3.9]{merkurjev2013essential}.
                
        Recall from the Introduction that $\ed_k(G)$ is defined as the maximal value of $\ed_k(\tau)$, 
        where the maximum is taken over all field extensions of $K/k$ and all $G$-torsors $\tau \colon \mathcal{T} \to \Spec(K)$.
        
\begin{lemma} \label{lem.prel2} Let $G$ be a linear algebraic group defined over a field $k$ and $k'/k$ be a field extension.

\begin{enumerate}[label=(\alph*)]
\item  Let $V$ be a generically free linear representation of $G$ defined over $k$. Then $\ed_k(V) = \ed_k(G)$.
\item $\ed_k(G)$ is the maximal value of $\ed_k(Y)$, where $Y$ ranges over generically free primitive $G$-varieties. 
\item $\ed_k(G) \geqslant \ed_{k'}(G_{k'})$. 
\item {\rm (}\cite[Proposition 2.14]{brosnan2007essential}, \cite[Example 4.10]{tossici2017essential}{\rm)}
Moreover, $\ed_k(G) = \ed_{k'}(G_{k'})$ if $k$ is algebraically closed.
\item {\rm (}\cite[Lemma 4.8]{tossici2017essential}{\rm)}
There exists an intermediate field $k \subset l \subset k'$ such that $l$ is finitely generated over $k$ and
$\ed_l(G_l) = \ed_{k'}(G_{k'})$.
\end{enumerate}
\end{lemma}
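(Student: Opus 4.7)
The plan is to prove (a), (b), (c) in sequence; parts (d) and (e) are quoted from the references cited in the statement. All three arguments hinge on identity~\eqref{e.torsor-variety}.

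For part (a), note that $V$ is irreducible, hence a primitive generically free $G$-variety. Identity~\eqref{e.torsor-variety} then gives $\ed_k(V) = \ed_k(\tau_V)$, where $\tau_V$ is the generic fiber torsor of $V \to V/G$. The inequality $\ed_k(\tau_V) \leqslant \ed_k(G)$ is immediate from the definition of $\ed_k(G)$ as a supremum. For the reverse inequality, I would invoke the classical statement that $\tau_V$ is \emph{versal}, that is, $\ed_k(\tau) \leqslant \ed_k(\tau_V)$ for every $G$-torsor $\tau$ over every field extension $L/k$; see~\cite{merkurjev2013essential}. This is the only substantive input needed in the whole proof.

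Part (b) is then immediate. Every primitive generically free $G$-variety $Y$ satisfies $\ed_k(Y) = \ed_k(\tau_Y) \leqslant \ed_k(G)$ by~\eqref{e.torsor-variety} and the definition of $\ed_k(G)$. Since $G$ is linear, it admits a generically free linear representation $V$ (for instance, embed $G$ in some $\GL_n$ and take $V = \Mat_n^{\oplus m}$ with $G$ acting by left multiplication on each factor, for $m$ sufficiently large); by part (a), $\ed_k(V) = \ed_k(G)$, so the supremum is realized.

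For part (c), fix such a generically free representation $V$ of $G$ over $k$. Its base change $V_{k'}$ is a generically free representation of $G_{k'}$ over $k'$, and it is primitive because $V_{k'}$ is geometrically irreducible. Part (a), applied on both sides, gives $\ed_k(G) = \ed_k(V)$ and $\ed_{k'}(G_{k'}) = \ed_{k'}(V_{k'})$, while~\Cref{lem.prel1.5}(a) gives $\ed_k(V) \geqslant \ed_{k'}(V_{k'})$. Combining the three inequalities yields the claim. The main obstacle throughout is the versality statement in part (a); everything else is bookkeeping with~\eqref{e.torsor-variety} and~\Cref{lem.prel1.5}(a).
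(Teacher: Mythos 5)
Your proposal is correct and follows essentially the same route as the paper: versality of the generically free representation for (a), the embedding $G \hookrightarrow \GL_n$ acting on matrices for (b), and \Cref{lem.prel1.5}(a) combined with (a) for (c) (the paper uses just $V=\Mat_n$, which is already generically free, but your $\Mat_n^{\oplus m}$ works too). The only divergence is that for (d) and (e) you defer entirely to the cited references, whereas the paper derives them in one line each from \Cref{lem.prel1.5}(b),(c) together with part (a) — a self-contained derivation you could have included at no extra cost.
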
 

\begin{proof} (a) This follows from the fact that the $G$-action on $V$ is versal, which is, in turn, a consequence of Hilbert's Theorem 90. See~\cite[Propositions 3.10 and 3.11]{merkurjev2013essential} for details.

(b) For any generically free primitive $G$-variety $Y$, $\ed_k(Y) = \ed_k(\tau_Y) \leqslant \ed_k(G)$. On the other hand,
note that since $G$ is a linear algebraic group, there exists a closed embedding $G \hookrightarrow \GL_n$ for some $n \geqslant 1$.
Let $G$ act on the space $V = \Mat_n$ of $n \times n$ matrices by left multiplication via this embedding. Now part (a) tells us that
$\ed_k(V) = \ed_k(G)$, and part (b) follows.

(c--d) By~\Cref{lem.prel1.5}, $\ed(V; G) \geqslant \ed(V_{k'}; G_{k'})$; moreover, equality holds if $k$ is algebraically closed.
The desired conclusions now follows from part (a).

(e) Let $V$ be a generically free linear representation of $G$ defined over $k$. 
By~\Cref{lem.prel1.5}(b), there exists an intermediate extension $k \subset l \subset k'$
such that $l$ is finitely generated over $k$ and $\ed_{l}(V_l) = \ed_{k'}(V_{k'})$.
By part (a), the left hand side of this equality is $\ed_l(G_l)$ and the right hand side is
$\ed_{k'}(G_{k'})$.  
\end{proof}

\subsection{Essential dimension at a prime}
\label{sect.ed-at-p} 

       Let $G$ be a linear algebraic group defined over a field $k$ and let $q$ be a prime integer. Essential 
       dimension at $q$ for $G$-torsors, $G$-varieties, and $G$ itself, is defined in a way that parallels 
       the definitions of essential dimension for these objects. We recall these definitions below.

                A field $K$ is called $q$-closed if every finite field extension $L/K$ is of degree $[L:K] = q^r$ for some integer $r \geqslant 0$. For every field $K$,  there exists a unique algebraic extension $K^{(q)}/K$ such that $K^{(q)}$ is $q$-closed and the degree $[L: K]$ of every every finite subextension $K \subset L \subset K^{(q)}$ is prime to $q$. The field $K^{(q)}$ is called a $q$-closure of $K$. It is the fixed field of a $q$-Sylow subgroup of the absolute Galois group of $K$.

        Let $k$ be a field, and let $G$ be an algebraic $k$-group. If $K/k$ is a field extension and $\tau \colon \mathcal{T} \to \Spec(K)$ is a $G$-torsor, 
        then $\ed_{k,q}(\tau)$ is the minimal value of $\ed_k(\tau_L)$, where $L$ ranges over 
        the finite field extensions of $K$ whose degree $[L:K]$ is prime to $q$. Equivalently, 
        \begin{equation} \label{e.ed-at-p2} \ed_{k,q}(\tau) = \ed_k(\tau_{K^{(q)}}),
        \end{equation}
        where $K^{(q)}$ is the $q$-closure of $K$. 
        
        Recall that a correspondence $X \rightsquigarrow Z$ between $G$-varieties $X$ and $Z$ of 
        degree $d$ is a diagram of $G$-equivariant rational maps of $G$-varieties the form
        \begin{equation} \label{e.correspondence}
                \xymatrix{X' \; \ar@{-->}[d]_{\text{\tiny degree $d$}} \ar@{-->}[dr] & \cr
                        X  & Z  \, ,  } 
        \end{equation}
        where the vertical map is dominant of degree $d$. We say that $X \rightsquigarrow Z$
        is dominant if the rational map $X' \dasharrow Z$ in the above
        diagram is so.  Dominant correspondences may be composed in an evident way.
        Note that sometimes the term ``rational correspondence'' is used in place of correspondence, to indicate that
        the maps in Diagram~\eqref{e.correspondence} are rational maps. All correspondences used in this paper will be rational in this sense.
        For notational simplicity we will use the term ``correspondence'' throughout.
        
        Now let $X$ be a generically free primitive $G$-variety over $k$. The essential dimension of $\ed_{k,q}(X)$ of $X$ at $q$ is 
        the minimal value of $\dim(Z) - \dim(G)$, where the minimum is taken over all $G$-equivariant dominant correspondences $X \rightsquigarrow Z$ of degree prime to $q$. Equivalently,
        \begin{equation} \label{e.ed-at-p1}
        \begin{split}
                \ed_{k,q}(X) = \ &\text{maximal value of } \ed_{k,q}(X'), \ \text{where the maximum is taken over all $G$-equivariant} \\
               &\text{dominant rational covers $X' \dasharrow X$ of degree prime to $q$.}
                \end{split}
        \end{equation}
        If $\tau_X$ is the $G$-torsor over $k(X)^G$ associated to $X$, as in the previous section, then one readily checks that
        \begin{equation} \label{e.torsor-variety-at-p}
        \ed_{k, q}(X) = \ed_{k, q}(\tau_X).
        \end{equation}
        
        The essential dimension $\ed_{k,q}(G)$ of $G$ at $q$ is the maximal value of $\ed_{k,q}(\tau)$, as $K$ ranges over 
        fields containing $k$ and $\mathcal{T} \to \Spec(K)$ ranges over $G$-torsors over $\Spec(K)$. Equivalently,
        $\ed_{k, q}(G)$ is the maximal value of $\ed_{k,q}(X)$, as $X$ ranges over the generically free primitive $G$-varieties.
        
        \section{Torsors over complete discrete valuation rings} 
        \label{sect.laurent}
        
        Let $R$ be a complete discrete valuation ring, $F$ its fraction field, $F_0$ its residue field, and $\cl{s}$ and $\cl{\eta}$ geometric points lying over the closed and generic point of $\Spec R$, respectively. We have a group homomorphism
        \[\on{sp}:\on{Gal}(F)\to \on{Gal}(F_0),\] called specialization homomorphism. Under the identifications $\on{Gal}(F)=\pi_1(\Spec F,\cl{\eta})$ and $\on{Gal}(F_0)=\pi_1(\Spec F_0,\cl{s})$, the homomorphism $\on{sp}$ is the composition 
        \begin{equation}\label{specialization}\pi_1(\Spec F,\cl{\eta})\to \pi_1(\Spec R,\cl{\eta})\xrightarrow{\sim} \pi_1(\Spec F_0,\cl{s}),\end{equation}
        where the map on the left is induced by the inclusion $\Spec F\hookrightarrow \Spec R$, and the isomorphism on the right by specialization of finite \'etale covers; see {\it e.g.} \cite[Tag 0BUP]{stacks-project}. The homomorphism $\on{sp}$ may also be defined Galois-theoretically; see \cite[Section 7.1]{serre-ci}.

        \begin{lemma}\label{system} Let $K$ be a field, $\pi\in K^{\times}$, and fix an algebraic closure $\cl{K}$ of $K$. Then 
        
        (a) there exists a system $\set{\pi^{1/m}}_{m\geqslant1}$ of roots of $\pi$ in $\cl{K}$ such that $(\pi^{1/m_1 m_2})^{m_2}=\pi^{1/m_1}$ for any $m_1, m_2 \geqslant1$.
        
        (b) Moreover, suppose $\alpha \in \cl{K}$ satisfies $\alpha^n=\pi$ for some $n \geqslant 1$. Then the system $\set{\pi^{1/m}}_{m\geqslant 1}$ in part (a) 
        can be chosen so that $\pi^{1/n} = \alpha$.
        \end{lemma}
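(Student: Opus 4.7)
The plan is to realize both systems as points of an appropriate cofiltered inverse limit of finite nonempty sets, indexed by $(\Z_{\geqslant 1},\mid)$, and then invoke the standard fact that such a limit is nonempty (a compactness argument in $\prod S_m$ with the discrete topology on each factor, or equivalently the finite intersection property in a profinite space).

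For part (a), I would take $S_m:=\set{x\in \cl{K}:x^m=\pi}$, which is nonempty (since $\cl{K}$ is algebraically closed) and finite with at most $m$ elements. For $m_1\mid m_2$, the assignment $x\mapsto x^{m_2/m_1}$ defines transition maps $S_{m_2}\to S_{m_1}$, compatible under composition since $(x^{m_3/m_2})^{m_2/m_1}=x^{m_3/m_1}$. Any point of $\varprojlim_m S_m$ yields a family $\set{\pi^{1/m}}_{m\geqslant 1}$ with the required compatibility $(\pi^{1/m_1m_2})^{m_2}=\pi^{1/m_1}$.

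For part (b), the idea is to run the same argument on a sub-system $T_\ell\subseteq S_\ell$ tailored so that $\pi^{1/n}=\alpha$ is forced. Writing $d_\ell:=\gcd(\ell,n)$, I define
\[T_\ell:=\set{x\in S_\ell: x^{\ell/d_\ell}=\alpha^{n/d_\ell}}.\]
This set is nonempty: any $(\ell/d_\ell)$-th root $y\in \cl{K}$ of $\alpha^{n/d_\ell}$ satisfies $y^\ell=(y^{\ell/d_\ell})^{d_\ell}=\alpha^n=\pi$, so $y\in T_\ell$. It is finite, and when $\ell_1\mid \ell_2$ one has $d_{\ell_1}\mid d_{\ell_2}$, so the computation
\[(x^{\ell_2/\ell_1})^{\ell_1/d_{\ell_1}}=x^{\ell_2/d_{\ell_1}}=(x^{\ell_2/d_{\ell_2}})^{d_{\ell_2}/d_{\ell_1}}=(\alpha^{n/d_{\ell_2}})^{d_{\ell_2}/d_{\ell_1}}=\alpha^{n/d_{\ell_1}}\]
shows that $x\mapsto x^{\ell_2/\ell_1}$ restricts to a map $T_{\ell_2}\to T_{\ell_1}$. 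Since $d_n=n$, we have $T_n=\set{\alpha}$, so any point of $\varprojlim_\ell T_\ell$ is a compatible system that automatically satisfies $\pi^{1/n}=\alpha$.

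The only mildly delicate point is locating the correct refinement $T_\ell$ for (b) and checking that the transition maps restrict to it; this rests on the elementary divisibility $\ell_1\mid \ell_2\Rightarrow \gcd(\ell_1,n)\mid \gcd(\ell_2,n)$. Once this is in place, both parts follow uniformly from the classical nonemptiness of a cofiltered limit of nonempty finite sets.
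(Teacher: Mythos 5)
Your proposal is correct, and it takes a genuinely different route from the paper. The paper reduces (a) to (b) and then argues constructively: it picks $\beta\in\cl{K}$ with $\beta^{(n-1)!}=\alpha$, recursively defines $\pi^{1/e!}$ along the cofinal chain $n!\mid (n+1)!\mid (n+2)!\mid\cdots$ by taking successive $e$-th roots, and then fills in $\pi^{1/m}$ for arbitrary $m$ as a suitable power of some $\pi^{1/e!}$, checking well-definedness by hand. You instead package the whole problem as the nonemptiness of a cofiltered limit of nonempty finite sets over $(\Z_{\geqslant 1},\mid)$, which is a standard compactness fact (and unproblematic here since the index set is countable and directed). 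Your treatment of (b) via the subsets $T_\ell=\set{x\in S_\ell: x^{\ell/\gcd(\ell,n)}=\alpha^{n/\gcd(\ell,n)}}$ is the one genuinely nontrivial ingredient, and your verifications -- nonemptiness of $T_\ell$, stability under the transition maps using $\gcd(\ell_1,n)\mid\gcd(\ell_2,n)$, and $T_n=\set{\alpha}$ -- are all correct. What the paper's approach buys is a fully explicit construction with no appeal to an abstract existence principle; what yours buys is the elimination of the factorial bookkeeping and the independence-of-$e$ check, at the cost of invoking the inverse-limit lemma. Either is a complete proof.
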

        
        \begin{proof} It suffices to prove part (b). Once (b) is established, we deduce (a) by setting $n = 1$ and $\alpha = \pi$.
        
        To prove (b), choose $\beta \in \cl{K}$ such that $\beta^{(n-1)!}= \alpha$. In particular, $\beta^{n!}= \pi$. 
        For every integer $e\geqslant d$, we define $\pi^{1/e!}\in \cl{K}$ recursively as follows: 
        $\pi^{1/n!}:= \beta$, and if $e> n$ then $\pi^{1/e!}$ is defined as an arbitrary $e^{\mathrm{th}}$ root of $\pi^{1/(e-1)!}$. 
        Now we define $\pi^{1/m}$ for an arbitrary integer $m\geqslant 1$ as follows. 
        Choose $e \geqslant n$ such that $m$ divides $e!$, and set $\pi^{1/m}:=(\pi^{1/e!})^{e!/m}$. 
        It is immediate to check that $\pi^{1/m}$ does not depend on the choice of $e$, that 
        $(\pi^{1/m_1 m_2})^{m_2}=\pi^{1/m_1}$ for any $m_1, m_2 \geqslant 1$ and that 
        $\pi^{1/n}=(\pi^{1/n!})^{(n-1)!}= \beta^{(n-1)!}= \alpha$.
        \end{proof}

        \begin{lemma}\label{big-diag}
                Let $R$ be a complete discrete valuation ring with fraction field $F$ and residue field $F_0$. 
                Set $p = \Char(F_0) \geqslant 0$. Choose a uniformizing parameter $\pi$ for $R$ and
                a system $\set{\pi^{1/n}}_{n\geqslant 1}$ of roots of $\pi$ in $\cl{F}$ such that $(\pi^{1/mn})^m=\pi^{1/n}$ for all $m,n\geqslant 1$.
                Let $F(\pi^{1/\infty})$ be the subfield of $\cl{F}$ obtained by adjoining all the $\pi^{1/n}$ to $F$. Then we have a commutative diagram
                \[
                \begin{tikzcd}
                        & 1 \arrow[d] & \\
                        1 \arrow[r]  & P \arrow[r] \arrow[d] & \on{Gal}\left(F(\pi^{1/\infty})\right)\arrow[r,"\on{sp}'"] \arrow[d,hook] & \on{Gal}(F_0) \arrow[r] \arrow[d,equal] & 1 \\
                        1 \arrow[r] & \on{Gal}(F_{\on{nr}}) \arrow[r] \arrow[d] & \on{Gal}(F)\arrow[r,"\on{sp}"] \ & \on{Gal}(F_0) \arrow[r] & 1, \\
                        & \Q/\Z[1/p] \arrow[d]  \\
                        & 1
                \end{tikzcd}
                \]
                where the rows and columns are exact and $P$ is a pro-$p$-group. $($If $p=0$, then $P$ is trivial and $\Q/\Z[1/p]$ should be interpreted as $\Q/\Z$.$)$ Moreover, the surjective homomorphism $\on{sp}'$ admits a section.
        \end{lemma}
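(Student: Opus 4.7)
First I would verify the top row. Since $F$ is complete (hence henselian), $F' := F(\pi^{1/\infty})$, being algebraic over $F$, is also henselian; its value group is $\bigcup_{n\geqslant 1}(1/n)\Z = \Q$ and its residue field is $F_0$ (each $F(\pi^{1/n})/F$ being totally ramified). This yields the standard inertia exact sequence $1 \to I_{F'} \to \on{Gal}(F') \to \on{Gal}(F_0) \to 1$. Because $\Q$ is divisible and admits no nontrivial finite-index subgroup, any finite extension of $F'$ has ramification index $1$; hence the tame quotient of $I_{F'}$ is trivial and $P := I_{F'}$ equals the wild inertia, which is pro-$p$.

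Next I would assemble the rest of the diagram. The bottom row is the usual inertia sequence for the complete DVR $F$. Because $F_{\on{nr}}/F$ is unramified while $F'/F$ preserves residue fields, $F_{\on{nr}} \cap F' = F$, so $L := F_{\on{nr}} \cdot F'$ satisfies $\on{Gal}(L/F') \cong \on{Gal}(F_{\on{nr}}/F) = \on{Gal}(F_0)$. Moreover $L$ has residue field $F_0^{\on{sep}}$ and is therefore the maximal unramified extension of $F'$, so $P = \on{Gal}(\cl F/L)$ sits inside $\on{Gal}(F_{\on{nr}}) = \on{Gal}(\cl F/F_{\on{nr}})$, giving the middle arrow of the left column. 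The quotient $\on{Gal}(F_{\on{nr}})/P = \on{Gal}(L/F_{\on{nr}})$ is identified with $\Q/\Z[1/p]$ via the Kummer characters $\sigma \mapsto (\sigma(\pi^{1/n})/\pi^{1/n})_{p\nmid n}$, assembled into one profinite character using the compatibility relations of \Cref{system} and the fact that $\mu_n \subset F_{\on{nr}}$ for $p\nmid n$. Commutativity and the remaining exactness statements follow by diagram chasing.

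Finally, for the section of $\on{sp}'$: in equal characteristic, a Puiseux-type description of (a large part of) $\cl F$ lets $\on{Gal}(F_0)$ act coefficient-wise on series in $t^{1/\infty}$, manifestly fixing each $\pi^{1/n} = t^{1/n}$, and this action extends to $\cl F$ to yield a continuous section $\on{Gal}(F_0) \to \on{Gal}(\cl F/F')$. In the general case, I would start from a classical section $\tilde H \subset \on{Gal}(F)$ of $\on{sp}$ (obtained by lifting a Frobenius-type element for the complete DVR $F$), and then modify each of its elements by an appropriate element of the tame inertia so as to fix every $\pi^{1/n}$, thereby placing the image inside $\on{Gal}(F') \subset \on{Gal}(F)$. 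I expect the main technical obstacle to be performing this modification \emph{coherently} as $\sigma$ varies over $\on{Gal}(F_0)$, so that the resulting map is a group homomorphism; here the compatibility $(\pi^{1/m_1 m_2})^{m_2} = \pi^{1/m_1}$ from \Cref{system} is essential, as it glues the individual tame Kummer characters into a single projective system that can be canceled uniformly by one element of the tame inertia for each $\sigma$.
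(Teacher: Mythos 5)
Your handling of the diagram itself is a legitimate alternative route to the paper's. Where the paper works explicitly with $F_{\on{mod}}=\bigcup_{(n,p)=1}F_{\on{nr}}(\pi^{1/n})$, defines $P:=\on{Gal}(F_{\on{mod}})$, and proves surjectivity of $\on{sp}'$ by hand (lifting a finite Galois $L_0/F_0$ to an unramified $L/F$ and showing $L\otimes_F F(\pi^{1/\infty})\cong L(\pi^{1/\infty})$ via a valuation-theoretic linear-independence computation), you invoke the general ramification theory of the henselian valued field $F(\pi^{1/\infty})$: value group $\Q$, residue field $F_0$, tame quotient of inertia trivial because $\Q$ is divisible, hence $\ker(\on{sp}')$ is the wild ramification subgroup and is pro-$p$. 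This is correct and arguably slicker, provided you cite the relevant general facts (surjectivity of $\on{Gal}(K)\to\on{Gal}(k)$ for henselian $K$, the description of inertia modulo wild ramification for arbitrary value groups, and pro-$p$-ness of the ramification subgroup even in the presence of defect). One shared caveat: in mixed characteristic your $L=F_{\on{nr}}\cdot F(\pi^{1/\infty})$ strictly contains $F_{\on{mod}}$ and need not be Galois over $F_{\on{nr}}$, so identifying $\on{Gal}(L/F_{\on{nr}})$ with the group cut out by the prime-to-$p$ Kummer characters requires care; the paper's own proof glosses over the same point (its claim that $F(\pi^{1/\infty})/F(\pi^{1/\infty})'$ is purely inseparable is only accurate in equal characteristic), so I do not count this against you.

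The genuine gap is the ``Moreover'' clause. You do not actually produce the section of $\on{sp}'$, and this is precisely the part of the lemma the paper needs later (it is what makes $\on{Gal}(F(\pi^{1/\infty}))$ a semidirect product $P\rtimes\Gamma$ in the proof of \Cref{claim1}). You explicitly leave the coherence of your tame-twisting construction as an unresolved obstacle; moreover, your general-case recipe starts from ``a classical section of $\on{sp}$ obtained by lifting a Frobenius-type element,'' which presupposes $\on{Gal}(F_0)$ procyclic and is not available for an arbitrary residue field, and your equal-characteristic recipe founders at ``this action extends to $\cl{F}$'' --- in characteristic $p$ the Puiseux field is far from algebraically closed, and extending the coefficientwise action to $\cl{F}$ as a group homomorphism is exactly the splitting problem you set out to solve. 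The paper disposes of all of this in one line: $P=\ker(\on{sp}')$ is pro-$p$ and $\on{cd}_p(\on{Gal}(F_0))\leqslant 1$ because $\on{char}(F_0)=p$, and every extension of a profinite group of $p$-cohomological dimension $\leqslant 1$ by a pro-$p$-group splits (Serre, \emph{Galois cohomology}); when $p=0$ the kernel is trivial and there is nothing to prove. Without this cohomological input, or a completed explicit construction, your proof of the splitting is incomplete.
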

        
        \begin{proof}
                The bottom horizontal exact sequence comes from the definition of the maximal unramified extension $F_{\on{nr}}$; see  \cite[\S~II.4.3, Exercise~2(c)]{serre1997galois}.
                
                Define $P:=\on{Gal}(F_{\on{mod}})$, where $F_{\on{mod}}=\cup_{(n,p)=1}F_{\on{nr}}(\pi^{1/n})$ is the maximal tamely ramified extension of $F$; see \cite[\S~II.4.3, Exercise~2(a)]{serre1997galois}, where it is asserted that $F_{\on{mod}}/F_{\on{nr}}$ is Galois with Galois group isomorphic to $\Q/\Z[1/p]$. The profinite group $P$ is a pro-$p$-group by \cite[\S~II.4.3, Exercise~2(b)]{serre1997galois}. This yields the vertical short exact sequence.  
                
                Let $F(\pi^{1/\infty})':=\bigcup_{(n,p)=1}F(\pi^{1/n})$. Then $F(\pi^{1/\infty})/F(\pi^{1/\infty})'$ is purely inseparable (in particular, $F(\pi^{1/\infty})=F(\pi^{1/\infty})'$ if $p=0$) and $F(\pi^{1/\infty})'/F$ is Galois with Galois group $\Q/\Z[1/p]$ (if $p>0$) or $\Q/\Z$ (if $p=0$).
                
                It remains to construct the top horizontal exact sequence. The map $\on{sp}'$ is defined as the restriction of $\on{sp}$. We claim that $\on{sp}'$ is surjective. Let $\gamma\in \on{Gal}(F_0)$: we will show that $\gamma$ belongs to the image of $\on{sp}'$. Let $L_0/F_0$ be a finite Galois extension such that $L_0$ is invariant under $\gamma$, and $\gamma$ restricts to an element of $\on{Gal}(L_0/F_0)$. Let $F\subset L\subset \cl{F}$, where $L/F$ is the unramified extension corresponding to $L_0/F_0$. By construction $L/F$ induces $L_0/F_0$ by passing to residue fields, and the homomorphism $\on{sp}_{L/F}:\on{Gal}(L/F)\to \on{Gal}(L_0/F_0)$ induced by $\on{sp}$ is an isomorphism. Let $L(\pi^{1/\infty})$ be the subfield of $\cl{F}$ generated by $L$ and $\set{\pi^{1/n}}_{n\geqslant1}$. We have an $F(\pi^{1/\infty})$-algebra homomorphism $f \colon L\otimes_FF(\pi^{1/\infty})\to L(\pi^{1/\infty})$ given by $\lambda\otimes z\mapsto \lambda z$.
                
                We would like to show that $f$ is an $F(\pi^{1/\infty})$-algebra isomorphism.         
                First we note that the image of $f$ contains both $L$ and $\pi^{1/n}$ for every $n \geqslant 1$. Hence, $f$ is surjective.
            
                It remains to show that $f$ is injective, i.e., that 
                $f \colon L\otimes_FF(\pi^{1/n})\to L(\pi^{1/n})$ is injective for every $n$.
                To prove this, it suffices to show that 
                $1, \pi^{1/n},  \ldots, \pi^{(n-1)/n}$ are linearly 
                independent over $L$. Indeed, suppose
                \begin{equation} \label{e.lin-independent}
                 l_0 + l_1 \pi^{1/n} +  \ldots + l_{n-1} \pi^{(n-1)/n} = 0 
                \end{equation}
                for some $l_0, l_1, \ldots, l_{n-1} \in L$.
                Since $L/F$ is unramified,
                the given valuation $\nu \colon F^* \to \mathbb Z$ lifts to valuations
                $L^* \to \mathbb{Z}$ and $L(\pi^{1/n})^* \to \dfrac{1}{n} \mathbb Z$
                which, by abuse of notation, we will also denote by $\nu$. Now observe that
                the terms on the left hand side of~\eqref{e.lin-independent} all have different valuations, as $\nu(l_i \pi^{i/n})$ is of the form $i/n$ plus an integer. 
                Thus the only way the sum in~\eqref{e.lin-independent}
                can be $0$ is if each term is $0$. In other words, $l_0 = l_1 = \ldots l_{n-1} = 0$,
                as desired.
                This shows that $f$ is injective and hence, an isomorphism.  
                
                As a consequence, we have a group isomorphism $\on{Gal}(L/F)\to \on{Gal}\left(L(\pi^{1/\infty})/F(\pi^{1/\infty})\right)$ such that the composition
                \[\on{Gal}(L/F)\longrightarrow \on{Gal}\left(L(\pi^{1/\infty})/F(\pi^{1/\infty})\right) \xrightarrow{\,\on{sp}'\,} \on{Gal}(L_0/F_0)\]
                is $\on{sp}_{L/F}$. This implies that $\gamma$ belongs to the image of $\on{sp}'$. We conclude that $\on{sp}'$ is surjective, as claimed.

                Let $Q$ be the kernel of $\on{sp}'$. Then $Q$ is the intersection of $\on{Gal}(F_{\on{nr}})$ with $\on{Gal}(F(\pi^{1/\infty}))=\on{Gal}(F(\pi^{1/\infty})')$. In other words, $Q$ is the absolute Galois group of the composite field $F_{\on{comp}}$ of $F_{\on{nr}}$ and $F(\pi^{1/\infty})'$. The field $F_{\on{comp}}$ is obtained from $F_{\on{nr}}$ by adjoining all the $\pi^{1/n}$ for $n$ not divisible by $p$, and so $F_{\on{comp}} = F_{\on{mod}}$. Therefore $Q=\on{Gal}(F_{\on{mod}})=P$. This proves the exactness of the top horizontal row, and thus completes the construction of the diagram. 
                
                The existence of a section of $\on{sp}'$ follows from the fact that $P$ is a pro-$p$-group and that $\on{Gal}(F_0)$ has $p$-cohomological dimension equal to $1$; see \cite[\S~II.4.3, Exercise~2(c)]{serre1997galois}. 
        \end{proof}

        \begin{lemma}\label{grothendieck-serre}
                Let $R$ be a complete discrete valuation ring, $F$ and $F_0$ be the fraction field and residue field of $R$, respectively, and $\pi\in R$ be a uniformizer. Fix an algebraic closure $\cl{F}$ of $F$, choose a system $\set{\pi^{1/n}}_{n\geqslant 1}$ of roots of $\pi$ inside $\cl{F}$ such that $(\pi^{1/mn})^m=\pi^{1/n}$ for all $m,n\geqslant 1$, and let $F(\pi^{1/\infty}):= \cup_{n\geqslant 1}F(\pi^{1/n})$. Let $G$ be a smooth affine group scheme over $R$ such that  $G/G^{\circ}$ is $R$-finite. In parts (b) - (d), assume that $G^{\circ}$ is reductive. Then

                \smallskip
                (a) The map $H^1(R,G)\to H^1(F_0,G)$ is bijective.
                
                \smallskip
                (b) The map $H^1(R,G) \to H^1(F,G)$ is injective. 
                
                \smallskip
                (c) The map $H^1(R,G)\to H^1(F(\pi^{1/\infty}),G)$ is injective.
                
                \smallskip
                (d) 
                Assume further that at least one of the following conditions holds.
                \begin{enumerate}[label=(\roman*)]
                        \item $\on{char}(F_0)=0$, $R=F_0[[\pi]]$ and $G$ is defined over $F_0$.
                        \item There exists a finite abstract group $S$ of order invertible in $F_0$ and an $R$-subgroup embedding $S_R\hookrightarrow G$ such that the induced map 
                        \[H^1(F(\pi^{1/\infty}),S)\to H^1(F(\pi^{1/\infty}),G)\]
                        is surjective.
                \end{enumerate} 
                Then the map $H^1(R,G)\to H^1(F(\pi^{1/\infty}),G)$ is bijective. 
        \end{lemma}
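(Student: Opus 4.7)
The plan is to prove the four parts in order, with each building on the previous. Part (a) is a standard Henselian lifting statement: since $R$ is complete (hence Henselian) and $G$ is smooth affine, the specialization map $H^1_{\mathrm{\acute{e}t}}(R, G) \to H^1(F_0, G_{F_0})$ is bijective, by the classical fact that smooth $R$-schemes lift $F_0$-points and that isomorphisms between them also lift. For part (b), with $G^\circ$ reductive and $G/G^\circ$ finite \'etale over $R$, this is a form of the Grothendieck--Serre conjecture over a DVR: it is known for connected reductive groups (Nisnevich), and extends to the present setting by d\'evissage on $1 \to G^\circ \to G \to G/G^\circ \to 1$ together with the standard twisting trick that reduces pointed-set injectivity to triviality of every twist.

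For part (c), the key observation is that for each $n \geqslant 1$, $R_n := R[\pi^{1/n}]$ is again a complete DVR, with uniformizer $\pi^{1/n}$ and the same residue field $F_0$. Applying (a) to both $R$ and $R_n$ shows that $H^1(R, G) \to H^1(R_n, G)$ is a bijection (both sides canonically identify with $H^1(F_0, G)$), while (b) applied to $R_n$ gives that $H^1(R_n, G) \to H^1(F(\pi^{1/n}), G)$ is injective; hence the composition $H^1(R, G) \to H^1(F(\pi^{1/n}), G)$ is injective for every $n$. Since $F(\pi^{1/\infty}) = \varinjlim_n F(\pi^{1/n})$ and \'etale cohomology of affine group schemes of finite type commutes with filtered colimits of rings, injectivity is preserved in the limit. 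For part (d) case (i), when $\on{char}(F_0) = 0$ the pro-$p$ kernel $P$ of Lemma~\ref{big-diag} is trivial, so $\on{sp}'$ is an isomorphism $\on{Gal}(F(\pi^{1/\infty})) \xrightarrow{\sim} \on{Gal}(F_0)$; since $G$ is defined over $F_0$, this identifies $H^1(F(\pi^{1/\infty}), G) = H^1(F_0, G)$, and combining with (a) yields the asserted bijection with $H^1(R, G)$.

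For part (d) case (ii), given $\beta \in H^1(F(\pi^{1/\infty}), G)$, I would first lift to $\gamma \in H^1(F(\pi^{1/\infty}), S)$ using the surjectivity hypothesis. Because $S$ is a constant finite group whose order is prime to $p = \on{char}(F_0)$, the class $\gamma$ corresponds, up to $S$-conjugation, to a continuous homomorphism $\phi \colon \on{Gal}(F(\pi^{1/\infty})) \to S$; any such $\phi$ must kill the pro-$p$ subgroup $P = \ker(\on{sp}')$ for order reasons, so factors as $\phi_0 \circ \on{sp}'$ for some $\phi_0 \colon \on{Gal}(F_0) \to S$, producing $\gamma_0 \in H^1(F_0, S)$. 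Since $|S|$ is invertible in $R$, the constant group scheme $S_R$ is finite \'etale, so part (a) applied to $S_R$ lifts $\gamma_0$ to $\delta \in H^1(R, S)$, and pushforward along $S_R \hookrightarrow G$ produces the desired class $\alpha \in H^1(R, G)$. Naturality together with the identification $\pi_1(\Spec R) = \on{Gal}(F_0)$ verifies that $\alpha_{F(\pi^{1/\infty})} = \beta$. The main subtle step is (d)(ii): the factoring through $\on{sp}'$ and the verification that the lifted class truly restricts to $\beta$ require careful bookkeeping with non-abelian cocycles and the precise tower of Galois groups in Lemma~\ref{big-diag}, while part (b) relies on nontrivial input from the Grothendieck--Serre program.
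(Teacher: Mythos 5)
Your parts (a), (b) and (c) follow the paper's argument. Part (a) is the SGA~3 Henselian lifting statement; part (b) is the d\'evissage along $1 \to G^\circ \to G \to G/G^\circ \to 1$ combined with Nisnevich's theorem and the twisting trick (the paper additionally spells out why $H^1(R, G/G^\circ) \to H^1(F, G/G^\circ)$ has trivial kernel, via the valuative criterion of properness applied to torsors under the finite quotient --- you should make this explicit, since it is the one ingredient of the d\'evissage that is not supplied by Grothendieck--Serre); part (c) is exactly the paper's reduction to the complete DVRs $R_n$ followed by passage to the filtered colimit. Your treatment of (d)(ii) is correct and in fact slightly more direct than the paper's: instead of choosing a section $\Gamma$ of $\on{sp}'$ and proving injectivity and bijectivity claims for restriction to $\Gamma$, you observe that any continuous homomorphism $\on{Gal}(F(\pi^{1/\infty})) \to S$ must kill the normal pro-$p$ kernel $P$ of $\on{sp}'$ for order reasons and hence factors through $\on{Gal}(F_0) \cong \pi_1(\Spec R, \cl{\eta})$; this gives surjectivity, and injectivity is already part (c).

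The genuine gap is in (d)(i). You argue that since $\on{sp}'$ is an isomorphism $\on{Gal}(F(\pi^{1/\infty})) \xrightarrow{\sim} \on{Gal}(F_0)$ when $\on{char}(F_0)=0$, and $G$ is defined over $F_0$, one gets an identification $H^1(F(\pi^{1/\infty}), G) = H^1(F_0, G)$. This is only formal when $G$ is a finite constant group, because then $H^1(L, G) = \on{Hom}_{\mathrm{cont}}(\on{Gal}(L), G)/\!\sim$ depends only on the profinite group $\on{Gal}(L)$. In case (i), $G$ is an arbitrary smooth affine group scheme with $G^\circ$ reductive, so typically positive-dimensional, and then $H^1(L, G) = H^1(\on{Gal}(L), G(L^{\mathrm{sep}}))$ depends on the coefficient module $G(L^{\mathrm{sep}})$, which strictly contains $G(F_0^{\mathrm{sep}})$; an isomorphism of absolute Galois groups does not by itself yield a bijection on $H^1$. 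What is actually needed is surjectivity of the restriction map $H^1(F_0, G) \to H^1(F(\pi^{1/\infty}), G)$ along the field inclusion $F_0 \subset F(\pi^{1/\infty}) = \bigcup_n F_0((\pi^{1/n}))$. This is a genuine theorem --- for $G = \PGL_n$ it asserts that every central simple algebra over $\bigcup_n F_0((\pi^{1/n}))$ is extended from $F_0$ --- and the paper obtains it by citing Proposition~5.4 of \cite{florence2006points}. You would need to invoke that result or supply a substitute (for instance, spreading a given torsor out over $F_0[[\pi^{1/n}]]$ for some $n$ and then using part (a)); the Galois-group isomorphism alone does not suffice.
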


        
        \begin{proof}
                (a) See \cite[Chapter~XXIV, Proposition~8.1]{SGA3I}. Only smoothness of $G$ is needed here.
                
                (b) First we will show that the map $H^1(R, {G})\to H^1(F, {G})$ has trivial kernel. Consider the short exact sequence  of group $R$-schemes 
                \[0\to {G}^{\circ} \to G \to {G}_0\to 0\]
                where ${G}^{\circ}$ is a smooth, connected and reductive groups scheme and ${G}_0$ is a finite group scheme over $R$. 
                Passing to non-Abelian cohomology, we obtain a commutative diagram
                \begin{equation}\label{groth-serre-disc}
                        \begin{tikzcd}
                                {G}_0(R) \arrow[r] \arrow[d,equal] & H^1(R,{G}^{\circ})  \arrow[d] \arrow[r]   & H^1(R, {G}) \arrow[d] \arrow[r]   & H^1(R,{G}_0) \arrow[d]  \\
                                {G}_0(F) \arrow[r] & H^1(F, {G}^{\circ}) \arrow[r]  & H^1(F, {G})  \arrow[r] & H^1(F, {G}_0).
                        \end{tikzcd}
                \end{equation}
                        Since ${G}_0$ is finite over $R$, by the valuative criterion for properness ${G}_0(F)= {G}_0(R)$. If ${T}\to \Spec R$ is a ${G}_0$-torsor over $R$, then $\mathcal{T}$ is also finite over $R$, hence ${T}(F)= {T}(R)$. This shows that the pullback $H^1(R, {G}_0)\to H^1(F,{G}_0)$ has trivial kernel. By the Grothendieck--Serre Conjecture over Henselian discrete valuation rings, due to Nisnevich \cite[Th\'eor\`eme 4.5]{nisnevich1984espaces}, the pullback map 
                        $H^1(R, {G}^{\circ})\to H^1(F, {G}^{\circ})$ also has trivial kernel.     An easy diagram chase now shows that the vertical map $H^1(R, {G})\to H^1(F,{G})$ 
                        has trivial kernel as well.
                
                We will now show that the map $H^1(R, {G})\to H^1(F, {G})$ is actually injective.
                Let $\alpha,\beta \in H^1(R,G)$ be such that $\alpha_F=\beta_F$. We want to show that $\alpha = \beta$. Let $a$ be a cocycle representing $\alpha$, and let 
                $\prescript{a}{}{G}$ be the $R$-group scheme obtained by twisting $G$ by $a$. The fiber of the map $H^1(R, G)\to H^1(F,G)$ over $\alpha_F$ is naturally identified with the kernel of the map $H^1(R,\prescript{a}{}{G}) \to H^1(F,\prescript{a}{}{G})$. Now the same argument as above,
                with $G$ replaced by its twist $\prescript{a}{}{G}$, shows that the map $H^1(R,\prescript{a}{}{G}) \to H^1(F,\prescript{a}{}{G})$ has trivial kernel. Consequently, the fiber of the map $H^1(R, {G})\to H^1(F, {G})$ over $\alpha_F$ is trivial.
        We conclude that $\alpha = \beta$, as desired.

                (c) Let $\alpha,\beta\in H^1(R,G)$ be such that $\alpha_{F(\pi^{1/\infty})}=\beta_{F(\pi^{1/\infty})}$. Our goal is to show that
                $\alpha = \beta$. We have $\alpha_{F(\pi^{1/n})}=\beta_{F(\pi^{1/n})}$ for some $n\geqslant1$. Let $R_n\subset F(\pi^{1/n})$ be the integral closure of $R$ in $F(\pi^{1/n})$. Then $R_n$ is a discrete valuation ring with fraction field $F(\pi^{1/n})$, residue field $F_0$, and uniformizer $\pi^{1/n}\in R_n$. By (b) the map $H^1(R_n,G)\to H^1(F(\pi^{1/n}),G)$ is injective, hence $\alpha_{R_n}=\beta_{R_n}$. We have a commutative diagram
                \[
                \begin{tikzcd}
                        H^1(R,G) \arrow[r] \arrow[d,"\wr"] & H^1(R_n,G) \arrow[d,"\wr"] \\
                        H^1(F_0,G) \arrow[r,equal] & H^1(R_n/(\pi^{1/n}),G),
                \end{tikzcd}
                \]
                where the left and right vertical arrows are induced by reduction modulo $\pi$ and $\pi^{1/n}$, respectively. 
                Both are isomorphisms by part (a). We conclude that $\alpha_{R_n}=\beta_{R_n}$ and consequently, $\alpha=\beta$, as desired.
                
                (d)  In view of part (c), it suffices to show that      the map $H^1(R, G) \to H^1(F(\pi^{1/\infty}), G)$ is surjective. If (i) holds, then the surjectivity of this map follows from \cite[Proposition 5.4]{florence2006points}. 
                
                From now on we will assume that condition (ii) of part (d) holds. We will break up the proof into several steps given by the claims below.
            Recall that the homomorphism  $\on{sp}' \colon \on{Gal}(F(\pi^{1/\infty})) \to \on{Gal}(F_0)$ has a section by~\Cref{big-diag}. Denote a section
            of $\on{sp}'$ by $\sigma:\on{Gal}(F_0)\to \on{Gal}(F(\pi^{1/\infty}))$, and let $\Gamma$ be its image.
                
                \begin{claim}\label{claim1}
                        The pullback map
                        $\sigma^*\colon H^1(F(\pi^{1/\infty}),S)\to H^1(\cl{F}^\Gamma,S)$
                        is injective for every finite discrete group $S$ of order invertible in $F_0$.
                \end{claim}     
                
                \begin{proof}[Proof of \Cref{claim1}] Given that $S$ is a finite discrete group, we may identify
                $H^1(F(\pi^{1/\infty}),S)$ with $\on{Hom}\left(\on{Gal}(F(\pi^{1/\infty})),S\right)/\sim \,$, $H^1\left(\cl{F}^\Gamma,S\right)$
                with $\on{Hom}(\Gamma,S)/\sim \, $, and $\sigma^*$ with the map
                \[\raisebox{.3em}{$\on{Hom}(\on{Gal}\left(F(\pi^{1/\infty})),S\right)$}\left / \raisebox{-.3em}{$\sim$}\right. \longrightarrow  \raisebox{.3em}{$\on{Hom}(\Gamma,S)$}\left/\raisebox{-.3em}{$\sim$} \right. \]
                induced by the restriction $\Gamma=\on{Gal}\left(\cl{F}^\Gamma\right)\to \on{Gal}\left(F(\pi^{1/\infty})\right)$. Here the symbol $\sim$ stands for
                the equivalence relation given by conjugation by an element of $S$.
        
                Let \[a,b \colon \on{Gal}\left(F(\pi^{1/\infty})\right)\longrightarrow S\] be group homomorphisms such that $a|_{\Gamma}\sim b|_{\Gamma}$. 
                After replacing $b$ by an $S$-conjugate, we may assume that $a|_{\Gamma}=b|_{\Gamma}$. Our goal is to prove that $a=b$. Let 
                \[H:=\set{\gamma\in\on{Gal}\left(F(\pi^{1/\infty})\right) \; \big| \; a(\gamma)=b(\gamma)}\] be the equalizer of $a$ and $b$. Since $a|_{\Gamma}=b|_{\Gamma}$, we know that $\Gamma\subset H$. By \Cref{big-diag}, $\on{Gal}(F(\pi^{1/\infty}))$ is a semi-direct product of $P$ and $\Gamma$, where $P$ is a 
                pro-$p$ group. Hence, $\on{Gal}(F(\pi^{1/\infty}))=P\cdot H$ and consequently,
                \begin{equation}\label{h1}[\on{Gal}(F(\pi^{1/\infty})):H]=[P \, :  \, (H\cap P)]\text{ is a power of $p$.}\end{equation}
                On the other hand, consider the group homomorphism \[\phi:\on{Gal}(F(\pi^{1/\infty}))\to S\times S\ \text{given by}\ \phi(\gamma)=(a(\gamma),b(\gamma)),\] and let \[H_{a,b}:=\on{ker}\phi=\on{ker}a\cap \on{ker}b.\] Then $\phi$ factors through an injective homomorphism of finite groups \[\on{Gal}\left(F(\pi^{1/\infty})\right)/H_{a,b}\lhook\joinrel\longrightarrow S\times S.\]
                Since $S$ has order prime to $p$, the index $[\on{Gal}(F(\pi^{1/\infty}):H_{a,b}]$ is not divisible by $p$. Since $H_{a,b}\subset H$, we deduce that
                \begin{equation}\label{h2}
                        [\on{Gal}(F(\pi^{1/\infty})):H] \text{ is not divisible by $p$}.\end{equation} 
                It follows from (\ref{h1}) and (\ref{h2}) that $H=\on{Gal}(F(\pi^{1/\infty}))$, that is, $a=b$. 
                \end{proof}
                
                \begin{claim}\label{claim2}
                        For every finite discrete group $S$, the pullback map  $H^1(R,S)\to H^1\left(\cl{F}^{\Gamma},S\right)$ is bijective. 
                \end{claim}
                
                \begin{proof}[Proof of~\Cref{claim2}] Consider the composition \[\phi:\Gamma\lhook\joinrel\longrightarrow \on{Gal}(F)\longrightarrow \pi_1(\Spec R,\cl{\eta}),\] where $\cl{\eta}$ is a geometric point lying above the generic point of $\Spec R$. The pullback map $H^1(R,S)\to H^1(\cl{F}^{\Gamma},S)$ may be identified with the map
\[\raisebox{.3em}{$\on{Hom}(\pi_1(\Spec R,\cl{\eta}),S)$}\left/\raisebox{-.3em}{$\sim$}\right.  \longrightarrow  \raisebox{.3em}{$\on{Hom}(\Gamma,S)$}\left/\raisebox{-.3em}{$\sim$}\right.\] induced by precomposition with $\phi$. Here once again, $\sim$ denotes the equivalence relation given by conjugation by an element of $S$. 
                By construction, the composition \[\Gamma\xrightarrow{\ \phi\ } \pi_1(\Spec R,\cl{\eta})\xrightarrow{\ \sim\ } \on{Gal}(F_0)\] is the restriction of $\on{sp}$ to $\Gamma$, and so it is an isomorphism, with inverse $\sigma$. We conclude that $\phi$ is also an isomorphism. 
                \end{proof}
                
                \begin{claim}\label{claim3}
                        The map $H^1(R, S) \to H^1\left(F(\pi^{1/\infty}),S\right)$ is bijective for every finite discrete group $S$ of order invertible in $F_0$.
                \end{claim}
                
                \begin{proof}[Proof of \Cref{claim3}] Injectivity follows from (c), and surjectivity from the commutativity of the triangle
                \[
                \begin{tikzcd}
                        H^1(R,S) \arrow[r] \arrow[dr,"\rotatebox{-25}{$\sim$}"']  & H^1\left(F(\pi^{1/\infty}),S\right) \arrow[d,hook] \\
                        & H^1\left(\cl{F}^{\Gamma},S\right).       
                \end{tikzcd}
                \]
                Here the vertical map is injective by \Cref{claim1} and the diagonal map is bijective by \Cref{claim2} applied to~$S$. 
                \end{proof}
                                
                We are now ready to complete the proof of \Cref{grothendieck-serre}(d) under assumption (ii). Injectivity follows from~(c). Let $S$ is the finite group provided by assumption (ii), such that the natural map $H^1(F(\pi^{1/\infty}),S)\to H^1(F(\pi^{1/\infty}),G)$ is surjective. By \Cref{claim3}
                the map $H^1(R,S)\to H^1(F(\pi^{1/\infty}),S)$ is also surjective, and part (d) follows.
        \end{proof}

        \section{Proof of Theorem \ref{puiseux} (i) and (ii)}
        \label{sect.puiseux(i)-(ii)}
        
        \begin{lemma}\label{e=1}
                Let $A\subset R$ be an inclusion of discrete valuation rings. Let $\mathfrak{m}_A$ and $\mathfrak{m}_R$ be the maximal ideals of $A$ and $R$, respectively, and assume that $\mathfrak{m}_A\subset \mathfrak{m}_R$. Then there exists a discrete valuation ring $A\subset B\subset R$ with maximal ideal $\mathfrak{m}_B$ such that $\mathfrak{m}_BR=\mathfrak{m}_R$ and such that the induced map $A/\mathfrak{m}_A\to B/\mathfrak{m}_B$ is an isomorphism.
        \end{lemma}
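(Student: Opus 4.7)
The plan is to find $B$ which absorbs all the ramification of $A \subset R$, so that $B \subset R$ is unramified with trivial residue extension relative to $A$. If $e := v_R(\pi_A) = 1$ then $\pi_A$ is already a uniformizer of $R$ and we may take $B = A$, which satisfies $\mathfrak{m}_A R = \pi_A R = \mathfrak{m}_R$ and trivial residue map; so assume $e \geqslant 2$ below.

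I would first pass to completions $\hat{A} \subset \hat{R}$. In the equicharacteristic case, by Cohen's structure theorem, $\hat{R}$ admits a coefficient field $k_R$, which by the functorial compatibility of Cohen's theorem can be chosen to contain a coefficient field $k_A$ of $\hat{A}$. Then $\hat{R} = k_R[[\pi]]$ for any uniformizer $\pi$ of $\hat{R}$. The crux of the construction is to choose $\pi$ so that the uniformizer $\pi_A$ of $\hat{A}$ satisfies $\pi_A \in k_A[[\pi]]$; this is achieved by a successive-approximation argument, iteratively multiplying $\pi$ by units of $\hat{R}$ congruent to $1$ modulo higher and higher powers of $\pi$, killing coefficients of the $\pi$-adic expansion of $\pi_A$ lying in $k_R \setminus k_A$. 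Once such a $\pi$ is produced, $\hat{B} := k_A[[\pi]]$ is a complete DVR with residue field $k_A$, uniformizer $\pi$, containing $\hat{A}$, and satisfying $\mathfrak{m}_{\hat{B}}\hat{R} = \mathfrak{m}_{\hat{R}}$. The mixed-characteristic case is analogous, replacing the coefficient field by a Cohen ring of $k_R$ containing a Cohen ring of $k_A$.

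Finally, set $B := \hat{B} \cap R$ inside $\hat{R}$. I would verify that $B$ is a DVR with uniformizer $\pi$, contains $A$, has residue field $k_A$ via an isomorphism $A/\mathfrak{m}_A \to B/\mathfrak{m}_B$, and satisfies $\mathfrak{m}_B R = \pi R = \mathfrak{m}_R$. The main obstacle is the uniformizer-adjustment step producing $\pi$ with $\pi_A \in k_A[[\pi]]$: the successive approximation must converge in the $\pi$-adic topology and remain compatible with the Cohen structure. This is delicate in positive residue characteristic, where Hensel-type lifting is more subtle, and in mixed characteristic, where the Cohen ring is less flexible than a coefficient field; beyond this step, the descent from the completed setting back to $R$ (via intersection with $R$) should be a routine verification.
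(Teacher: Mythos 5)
Your proposal takes a completely different, and much heavier, route than the paper's: the paper simply picks a uniformizer $\pi$ of $R$ and sets $B:=A[\pi]_{(\pi)}$, with no completion and no Cohen structure theory. More importantly, the step you correctly single out as the crux --- producing a uniformizer $\pi'$ of $\hat R$ with $\pi_A\in k_A[[\pi']]$ --- is not merely ``delicate''; it is impossible in general, already in equal characteristic zero, where none of the Hensel/Cohen-ring subtleties you mention are in play. Write $\pi_A=c_e\pi^e+\cdots$ with $c_e\in k_R^\times$, and let $\pi'=u_0\pi+\cdots$ be any other uniformizer, $u_0\in k_R^\times$. Comparing leading coefficients, $\pi_A\in k_A[[\pi']]$ forces $c_e=a\,u_0^{\,e}$ for some $a\in k_A^\times$, i.e.\ $c_e\in k_A^\times\cdot(k_R^\times)^e$. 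Now take $A=\Q[[s]]\hookrightarrow R=\Q(t)[[\pi]]$ via $s\mapsto t\pi^2$: here $e=2$, $c_2=t$, and $t$ is not a rational constant times a square in $\Q(t)$ (compare degrees). So the successive approximation cannot complete even its first step, for any choice of uniformizer. (Two secondary gaps: coefficient fields are not functorial, so a coefficient field $k_R$ of $\hat R$ containing a prescribed coefficient field of $\hat A$ need not exist when the residue extension is inseparable; and in mixed characteristic $C_A[[\pi]]$ is a two-dimensional regular local ring while $\hat R$ need not have the form $C_R[[\pi]]$ at all, so ``analogous'' conceals real content.)

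The obstruction is intrinsic, not an artifact of your method: for the pair above, no ring $B$ with the stated properties exists. Indeed, suppose $B$ were such a DVR with uniformizer $\varpi$. From $\mathfrak{m}_B\subset\mathfrak{m}_BR\cap B=\mathfrak{m}_R\cap B$ and maximality one gets $\mathfrak{m}_B=\mathfrak{m}_R\cap B$; the condition $\mathfrak{m}_BR=\mathfrak{m}_R$ gives $v_R(\varpi)=1$; and $s\in\mathfrak{m}_A\subset\mathfrak{m}_B$ gives $s=\varpi^2u$ with $u\in B^\times$, hence $v_R(u)=0$. Since $A/\mathfrak{m}_A\to B/\mathfrak{m}_B$ is onto, $u=q+m$ with $q\in\Q^\times$ and $m\in\mathfrak{m}_B\subset\mathfrak{m}_R$, and comparing $\pi^2$-coefficients in $t\pi^2=\varpi^2(q+m)$ yields $t\in\Q^\times\cdot\bigl(\Q(t)^\times\bigr)^2$, a contradiction. (The paper's own construction also degenerates on this example: $s\notin\pi A[\pi]$, so $s$ is inverted in $A[\pi]_{(\pi)}$, the inclusion $A\subset B$ is no longer local, and $B/\mathfrak{m}_B=\operatorname{Frac}(A)$ rather than $A/\mathfrak{m}_A$.) So the statement as given requires an additional hypothesis relating $\pi_A$ to the residue extension $K_0/k_0$; without one, neither your argument nor any other can close this gap, and your leading-coefficient condition $c_e\in k_A^\times\cdot(k_R^\times)^e$ is precisely the first-order form of what must be assumed.
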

        
        \begin{proof}
                Let $\pi\in \mathfrak{m}_R-\mathfrak{m}_R^2$ be a uniformizer, and set $B:=A[\pi]_{(\pi)}\subset R$. Since $A$ is noetherian and $B$ is a localization of a finitely generated $A$-algebra, $B$ is also noetherian. Thus $B$ is a local noetherian domain whose maximal ideal is principal, and consequently, a discrete valuation ring. 
                
                Since $\pi$ is a generator of $\mathfrak{m}_R$, it is clear that $\mathfrak{m}_BR=\mathfrak{m}_R$. The map $A/\mathfrak{m}_A\to B/\mathfrak{m}_B$ is surjective because $\pi$ goes to zero in $B/\mathfrak{m}_B$, and is injective because $A/\mathfrak{m}_A$ is a field.
        \end{proof}

        \begin{proof}[Proof of \Cref{puiseux} assuming (i) or (ii)] 
                Choose a discrete valuation ring  $B$ between $A$ and $R$ as in~\Cref{e=1}, so that $\mathfrak{m}_BR=\mathfrak{m}_R$ and the induced map 
                $k_0 = A/\mathfrak{m}_A\to B/\mathfrak{m}_B$ is an isomorphism.
                Let $\hat{B}$ be the completion of $B$. Since $R$ is complete, 
                the universal property of the completion gives rise to an embedding
                $\hat{B} \hookrightarrow R$. Denote the fraction field of $\hat{B}$ by $l$.
                By \Cref{lem.prel1}(b), $\ed_{k}(\alpha_K) \geqslant \ed_l(\alpha_K)$. Thus in order to
                prove the inequality 
                \begin{equation} \label{e.puiseux}
                \ed_{k_0}(\alpha_{K_0}) \leqslant \ed_k(\alpha_K), 
                \end{equation}
                of~\Cref{puiseux}, it suffices to show that $\ed_{l}(\alpha_{K_0}) \leqslant \ed_l(\alpha_K)$.
                Thus for the purpose of proving Inequality~\eqref{e.puiseux}, we may replace
                $A$ by $\hat{B}$. In other words, we may assume
                we may assume that $\mathfrak{m}R=\mathfrak{m}_R$ is the maximal ideal of $R$.
            
            Once again, let $\pi\in \mathfrak{m}_R-\mathfrak{m}_R^2$ be a uniformizing parameter.
                By definition of $\ed_k(\alpha_K)$, there exists an intermediate field $k\subset F\subset K$ such that
                \begin{equation}\label{e.ed-alphat}
                        \ed_{k}(\alpha_K)=\trdeg_k(F)
                \end{equation}
                and $\alpha$ descends to $F$. Let $v \colon K^{\times}\to \Z$ be the valuation determined by $\pi$. Since $F$ contains $k$, $v|_{F^{\times}}$ is surjective. Let $O\subset F$ be the valuation ring of $v|_{F^{\times}}$, and $F_0$ be the residue field of the restriction of $v$ to $F$. 
                The inclusion $A\subset O$ induces an inclusion $k_0\subset F_0$. Moreover, by \cite[Lemma 7.1]{reichstein2020essential} we have
                \begin{equation} \label{e.residue-field} \trdeg_{k}F\geqslant \trdeg_{k_0}F_0.
                \end{equation}
                Let $\hat{O}$ be the completion of $O$, and let $\hat{F}$ be the fraction field of $\hat{O}$, that is, the completion of $F$ as a valued field. Since $K$ is complete, the universal property of the completion gives a unique field embedding $\hat{F}\hookrightarrow K$ extending the inclusion $F\subset K$. We view $\hat{F}$ as a subfield of $K$ via this embedding, so that $F\subset \hat{F}\subset K$, $O\subset \hat{O}\subset R$, and the residue field of $\hat{F}$ is $F_0$. 
                
                Since $\pi$ is a uniformizer in $R$, it is also a uniformizer in $O$ and in $\hat{O}$. 
                Let $\cl{K}$ be an algebraic closure of $K$, and fix a system $\set{\pi^{1/m}}_{m\geqslant1}$ of roots of $\pi$ in $\cl{K}$ such that $(\pi^{1/mn})^n=\pi^{1/m}$ for all $m,n\geqslant1$. Such a system of roots exists by \Cref{system}(a).  We then define the following subfields of $\cl{K}$:
                \[K(\pi^{1/\infty}):=\bigcup_{n\geqslant1} K(\pi^{1/n})\quad\text{and}\quad F(\pi^{1/\infty}):=\bigcup_{n\geqslant1} F(\pi^{1/n}).\]
                Thus we have a commutative diagram
                \[
                \begin{tikzcd}
                        F_0 \arrow[d,hook] & \arrow[l]  \hat{O} \arrow[d,hook]\arrow[r,hook] &   \arrow[r,hook] \hat{F} \arrow[d,hook] & \hat{F}(\pi^{1/\infty}) \arrow[d, hook]  \\
                        K_0 & \arrow[l] R \arrow[r,hook] & K \arrow[r,hook] & K(\pi^{1/\infty}).
                \end{tikzcd}
                \]
                Passing to Galois cohomology, we obtain the following commutative diagram
                \begin{equation*}\label{key-diag}
                        \begin{tikzcd}
                                H^1(F_0,G) \arrow[d] & \arrow[l,swap, "\sim"]  H^1(\hat{O},G) \arrow[d]\arrow[r,"\sim"] & H^1\left(\hat{F}(\pi^{1/\infty}),G\right) \arrow[d]  \\
                                H^1(K_0,G) & \arrow[l,swap,"\sim"] H^1(R,G) \arrow[r,"\sim"] & H^1\left(K(\pi^{1/\infty}),G\right)
                        \end{tikzcd}
                \end{equation*}
                where the horizontal maps on the left are isomorphisms by \Cref{grothendieck-serre}(a), and those on the right are isomorphisms by \Cref{grothendieck-serre}(d). (This is where assumptions (i) and (ii) are used.) 
                We deduce that $\alpha_{K_0}$ descends to $F_0$, and thus
                \[\trdeg_{k_0}F_0\geqslant \ed_{k_0}\alpha_{K_0}.\]
                By \eqref{e.ed-alphat} and \eqref{e.residue-field}, we conclude that 
                \[\ed_{k}\alpha_K = \trdeg_{k}F\geqslant \trdeg_{k_0}F_0\geqslant \ed_{k_0}\alpha_{K_0}.\]
                This completes the proof of the Inequality~\eqref{e.puiseux}.
                
                In the case, where $A=k_0[[t]]$ and $G_{A}$ is defined over $k_0$, \Cref{lem.prel1}(a) implies that the reverse inequality is also true
                and thus $\ed_{k}\alpha_K=\ed_{k_0}\alpha_{K_0}$. This completes the proof of \Cref{puiseux} under assumptions (i) and (ii).
                (Note that at the beginning of the proof of~\eqref{e.puiseux} we replaced $A$ by
                $\hat{B}$. For the reverse inequality we work with $A= k_0[[t]]$ directly, here we do not need $\hat{B}$.)
        \end{proof}
       
        We conclude this section with the following remarks on conditions (i) and (ii) of \Cref{puiseux}. 
        
\begin{rmks} \label{rem(i)-(ii)}
\leavevmode
\begin{enumerate}
\item By Cohen's Structure Theorem, when $\on{char}(k_0)=0$ there exists an isomorphism $A\simeq k_0[[t]]$ inducing the identity on residue fields. Such an isomorphism is not unique. One may restate (i) as follows: we may identify $A$ with $k_0[[t]]$ in such a way that $G$ is defined over the subfield of constants $k_0\subset k_0[[t]]$.
\item In (i), the fact that $G$ is constant is not automatic in general; see \cite[Chapitre~XIX, \S~5]{SGA3III} for an example, where $G_{k((t))}$ is an \'etale form of $\on{PGL}_2$ but the fiber $G_0$ at $t=0$ is solvable with two connected components.
                        
                        However, if $\on{char}(k_0)=0$ and $G$ is reductive over $A=k_0[[t]]$, then $G$ is constant. To see this, note that by \cite[Chapitre~XXIV, Corollaire~1.18]{SGA3III}, letting $G_{\on{split}}$ denote the split form of $G$ over $A$, there exists an \'etale $\on{Aut}(G_{\on{split}})$-torsor $P$ such that twisting $G_{\on{split}}$ by $P$ yields $G$. By \cite[Chapitre~XXIV, Th\'eor\`eme~1.3]{SGA3III}, $\on{Aut}(G_{\on{split}})$ is smooth over $A$, hence by \cite[Chapitre~XXIV, Proposition~8.1]{SGA3I} the torsor $P$ is defined over $k_0$. Since $G_{\on{split}}$ is split, it is defined over $\Z$, hence over $k_0$. Thus $G$, being the twist of a group defined over $k_0$ by a torsor defined over $k_0$, is also defined over $k_0$.
\item In (ii), if we have an $A$-group embedding $S_{A}\subset G_{A}$, then we also have a $k_0$-group embedding $S_{k_0}\subset G_{k_0}$. However, the converse does not necessarily hold: the natural group homomorphism $G(A)\to G(k_0)$ is surjective (by Hensel's lemma, since $G$ is $A$-smooth) but not necessarily injective.
\end{enumerate}                          
\end{rmks}

        \section{Proof of Theorem \ref{thm.split-red}}
        
        Recall that a commutative ring with identity is said to be semilocal if it has finitely many maximal ideals.
        
        \begin{prop}\label{red-structure-split}
                Let $G$ be a split reductive group over $\Spec\Z$ of rank $r \geqslant 0$. Let $n$ be the order of the Weyl group of $G$. Then there exists a finite flat $\Z$-subgroup $S\subset G$ 
                with the following properties.
\begin{enumerate}[label=(\alph*)]
\item For every semilocal ring $R$ the natural map $H^1(R,S)\to H^1(R,G)$ is surjective.
\item If a prime number $p$ divides the degree of $S$, then $p$ divides $2^r n$.
\item $S$ is constant over $R = \Z[1/2^r, \zeta]$ and $\zeta$ is a primitive root of unity of degree $2^rn$. In other words, $S_R$ is the constant $R$-group scheme associated to an abstract finite group. 
\end{enumerate}
\end{prop}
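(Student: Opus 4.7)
The plan is to construct $S$ as an ``extended Weyl group'' sitting inside the normalizer of a split maximal torus of $G$, verify (b) and (c) directly from the construction, and then prove (a) by factoring the torsor-reduction through this normalizer.

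First, I fix a split maximal torus $T\subset G$ defined over $\Z$, and set $N := N_G(T)$. This gives the exact sequence $1\to T\to N\to W\to 1$, with $T\cong \G_m^r$ and $W$ the constant Weyl group of order $n$. For each simple reflection $s_\alpha\in W$, Tits's construction provides canonical lifts $n_{s_\alpha}\in N(\Z)$ with $n_{s_\alpha}^2=\alpha^\vee(-1)\in T[2](\Z)$. I then let $S\subset N$ be the $\Z$-flat closed subgroup generated by $T[2]\cong\mu_2^{\,r}$ and the $n_{s_\alpha}$. Tits's braid-type relations ensure that $S$ is finite and fits into
\[ 1\to T[2]\to S\to W\to 1, \]
so $|S|=2^r n$, giving (b). For (c), $T[2]\cong(\Z/2\Z)^r$ is already constant over $\Z$ and $W$ is constant because $G$ is split; over $R=\Z[1/2^r,\zeta_{2^r n}]$ the order $|S|$ is invertible, so $S_R$ is finite \'etale, and a standard descent argument shows that the $\pi_1(\Spec R)$-action classifying $S_R$ becomes trivial once the roots of unity $\zeta_{2^r n}$ are available, making $S_R$ constant.

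The substantive content is (a). I plan to factor the map as
\[ H^1(R,S)\to H^1(R,N)\to H^1(R,G). \]
Surjectivity of the right-hand arrow for semilocal $R$ and split reductive $G$ is the classical reduction-to-normalizer theorem (Grothendieck, later extended by Colliot-Th\'el\`ene--Sansuc and Chernousov--Gille--Reichstein). For the left-hand arrow, given $[\xi]\in H^1(R,N)$ with image $[\bar\xi]\in H^1(R,W)$, the fibre of $H^1(R,N)\to H^1(R,W)$ over $[\bar\xi]$ is controlled by $H^1(R,T^{\bar\xi})$; this group vanishes by Hilbert~90 for tori over a semilocal ring, so $[\xi]$ is determined by $[\bar\xi]$, and it suffices to lift $[\bar\xi]$ through $S\to W$. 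The multiplication-by-$2$ exact sequence $1\to T[2]\to T\to T\to 1$, after twisting by $\bar\xi$, yields
\[ H^1(R,T^{\bar\xi})\to H^2(R,T[2]^{\bar\xi})\to H^2(R,T^{\bar\xi}), \]
so the right-hand map is injective. The $S$-obstruction to lifting $[\bar\xi]$ maps to the $N$-obstruction, and the latter vanishes because $\xi$ already exists; hence the $S$-obstruction vanishes too, and $[\bar\xi]$ lifts to an element of $H^1(R,S)$, as required.

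The main obstacle will be verifying, in the semilocal setting, the two vanishing results I am invoking: the surjectivity of $H^1(R,N)\to H^1(R,G)$ for semilocal $R$ and split reductive $G$, and the vanishing of $H^1(R,T^{\bar\xi})$ for a twisted form of a split torus. Both are standard but nontrivial; the efficient strategy is to cite ready-made results from the reduction-of-structure literature for torsors over semilocal rings rather than reprove them here.
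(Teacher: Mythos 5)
Your overall architecture --- place $S$ inside the normalizer $N=N_G(T)$ of a split maximal torus, invoke reduction of structure from $G$ to $N$ over semilocal rings, then lift from $N$ to $S$ --- matches the paper's, which also builds $S$ inside $N$ and cites Chernousov--Gille--Reichstein for part (a). But your lifting step contains a genuine gap, and it is exactly the point that forces the paper to take a larger $S$ than yours. You assert that $H^1(R,T^{\bar\xi})$ ``vanishes by Hilbert 90 for tori over a semilocal ring.'' That form of Hilbert 90 holds for $\G_{\on{m}}$, hence for split and quasi-trivial tori, but $T^{\bar\xi}$ is a form of $\G_{\on{m}}^r$ twisted through the $W$-action on the cocharacter lattice, and such tori generally have nontrivial $H^1$ even over fields: already for $G=\PGL_2$ the twist is a norm-one torus, with $H^1(K,\cdot)=K^{\times}/N_{L/K}(L^{\times})$. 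Both of your uses of the vanishing then collapse: the fiber of $H^1(R,N)\to H^1(R,W)$ over $[\bar\xi]$ is not a single point, and the map $H^2(R,T[2]^{\bar\xi})\to H^2(R,T^{\bar\xi})$ is not injective (its kernel is $H^1(R,T^{\bar\xi})/2$). With your $S$, which satisfies $S\cap T=T[2]$, surjectivity of $H^1(R,S)\to H^1(R,N)$ onto the relevant classes is therefore not established, and is doubtful in general.

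The repair --- and the reason the paper's $S$ is generated by the scheme-theoretic closure $\Gamma$ of $N(\Z)$ together with $\phi_n^{-1}(\Gamma\cap T)\supseteq T[2n]$, rather than by $T[2]$ and Tits lifts --- is a torsion argument, which is the substance of the cited \cite[Proposition~3.1]{chernousov2008reduction}. The twisted torus $T^{\bar\xi}$ is split by the $W$-torsor $E\to\Spec(R)$ classified by $\bar\xi$; since $E$ is finite over the semilocal ring $R$, it is semilocal, so $H^1(E,\G_{\on{m}}^r)=0$, and restriction--corestriction shows that $H^1(R,T^{\bar\xi})$ is killed by $n=|W|$. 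The Kummer sequence $1\to T[n]^{\bar\xi}\to T^{\bar\xi}\xrightarrow{\,n\,} T^{\bar\xi}\to 1$ then shows that $H^1(R,T[n]^{\bar\xi})\to H^1(R,T^{\bar\xi})$ is surjective, which is precisely what one needs to move within the fiber over $[\bar\xi]$ and reach the given class $[\xi]$; this requires $T[n]\subset S$. Two secondary issues: your $S$, generated by $\mu_2^r$ and constant sections, is not obviously flat over $\Z$ at the prime $2$ (the paper obtains flatness of its $S$ by working with the reduced closure $\Gamma$ and with the flat subgroups $T[m]$); and $|S|=2^rn$ is not invertible in $\Z[1/2^r,\zeta]$ when $n$ has odd prime factors, so ``finite \'etale plus enough roots of unity'' does not by itself yield the constancy claimed in (c).
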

        
        \begin{proof}
                Let $T\subset G$ be a split maximal torus over $\Z$, let $N\subset G$ be the normalizer of $T$, and let $W:=N/T$ be the Weyl group scheme. Since $W$ is \'etale over $\Z$, it is a disjoint union of copies of $\Spec \Z$. Since $\on{Pic}(\Z)=0$ and $T$ is split, we have $H^1(\Z,T)=0$. It follows that we have a short exact sequence
                \[1\to T(\Z)\to N(\Z)\to W(\Z)\to 1.\]
                Since $T(\Z)= (\G_{\on{m}}(\Z))^r = \set{\pm 1}^r$, the finite group $N(\Z)$ has order $2^rn$. Let $\Gamma\subset G$ be the scheme-theoretic closure of $N(\Z)\subset N(\Q)$ inside $G$. 
                
                \begin{claim}\label{claim5.1.new}
                The $\Z$-subgroup $\Gamma$ is reduced and finite flat  of degree $2^rn$ over $\Z$.
                \end{claim}
                
                \begin{proof}[Proof of \Cref{claim5.1.new}] Since $\Gamma$ is defined as the scheme-theoretic closure of the constant subscheme $N(\Z)_{\Q}\subset G_{\Q}$, by \cite[\'Equation~1.2.6(2)]{bruhat1984groupes} we have $\Gamma_{\Q}=N(\Z)_{\Q}$. Hence $\Gamma_{\Q}$ is finite over $\Q$ 
                of degree $2^rn$.
         Note that $T\cap \Gamma\subset T$ is finite over $\Z$; otherwise $\Gamma$ would contain a $\Z$-subgroup isomorphic to $\G_{\on{m}}$ and hence $\Gamma(\Q)$ would be infinite, a contradiction. The quotient $T/(T\cap \Gamma)$ is also finite over $\Z$ because it is a closed subgroup of $W$. Since $\Gamma$ is an extension of $T/(T\cap \Gamma)$ by $T\cap \Gamma$, it is finite over $\Z$ as well. In particular, the degree of $\Gamma$ over $\Z$ is equal to the degree of 
         $\Gamma_{\Q}$ over $\Q$, which is $2^rn$. Since $N(\Z)_{\Q}$ is reduced, $\Gamma$ is also reduced, and hence $\Gamma$ is flat over $\Z$; 
         see~\cite[Lemma 4.3.9]{liu2002algebraic}. 
         \end{proof}
                
         \begin{claim}\label{claim5.1.1}
            Let $s \colon \Spec \Z\to N$ be a $\Z$-point of $N$. Then $s$ factors through $\Gamma$.  
         \end{claim}    

         \begin{proof}[Proof of \Cref{claim5.1.1}] 
         Clearly $s_{\Q}:\Spec \, \Q\to N_{\Q}$ factors through $\Gamma_{\Q}$. Moreover, since $\Gamma$ is finite over $\Z$, by the existence part of the valuative criterion for properness $s_{\Z_{(p)}}:\Spec\, \Z_{(p)}\to N_{\Z_{(p)}}$ factors through $\Gamma_{\Z_{(p)}}$ for every prime $p$. By the uniqueness part of the valuative criterion for properness, this factorization is unique for each $p$. Therefore, these local factorizations glue to  a global factorization of $s$ through $\Gamma$ over $\Z$. 
         \end{proof}
                
        \begin{claim}\label{claim5.1.2} 
        The group scheme $\Gamma$ is the union of the $2^rn$ irreducible components given by sections of $N\to\Spec \, \Z$. Moreover, the restriction of $\Gamma$ to $\Spec \,\Z[1/2^r]$ is constant.
        \end{claim}

        \begin{proof}[Proof of \Cref{claim5.1.2}] 
        Let $\Gamma'$ be the closed subscheme of $N$ given by the union of the images of the $2^rn$ sections of $N\to\Spec \, \Z$. (Note that $N$ is separated, hence every section of $N\to \Spec(\Z)$ is a closed immersion. Therefore $\Gamma'$ is closed.) We have $\Gamma'\subset \Gamma$ by \Cref{claim5.1.1}. By \Cref{claim5.1.new} there exist a $\Z$-algebra $R$, finite and free of rank $2^rn$, and an ideal $I\subset R$ such that $\Gamma=\Spec(R)$ and $\Gamma'=\Spec(R/I)$. Since $\Gamma'$ is the union of $2^rn$ copies of $\Spec(\Z)$ over $\Z$, the rank of $R/I$ is also $2^rn$. It follows that $I$ is torsion. Since $R$ is free, this implies that $I=0$, hence $\Gamma'=\Gamma$. 
        
        In order to prove the second part of \Cref{claim5.1.2}, we consider the commutative diagram of short exact sequences
                \[
                \begin{tikzcd}
                        0 \arrow[r] & T(\Z) \arrow[r] \arrow[d, hook] & N(\Z) \arrow[r] \arrow[d] & W(\Z) \arrow[r] \arrow[d,"\wr"] & 0 \\
                        0 \arrow[r] & T(\F_p) \arrow[r] & N(\F_p) \arrow[r]  & W(\F_p) \arrow[r] & 0
                \end{tikzcd}
                \]
                where $\F_p$ is the field of $p$ elements and $p$ is prime to $2^r$. The vertical map on the left is the inclusion $\set{\pm 1}^r\hookrightarrow (\F_p^{\times})^r$, and in particular is injective. On the other hand, the map $W(\Z)\to W(\F_p)$ is an isomorphism because $W$ is a disjoint union of copies of $\Spec \, \Z$. It follows that $N(\Z)\to N(\F_p)$ is injective. Consequently, $\Gamma_{\F_p}=\Gamma'_{\F_p}$ is a disjoint union of $2^rn$ copies of $\Spec \F_p$, which means that the irreducible components of $\Gamma$ do not intersect above $p$. Therefore the restrictions to $\Spec\,\Z[1/2^r]$ of the $2^rn$ irreducible components of $\Gamma$ are pairwise disjoint. This concludes the proof of~\Cref{claim5.1.2}.
                \end{proof}
        
                \smallskip
                For each positive integer $m\geqslant 1$, let $\phi_m \colon T\to T$ be the $m^\mathrm{th}$ power map given by $t\mapsto t^m$.  As usual, we will denote the kernel of $\phi_m$ by $T[m]$. For each $m \geqslant 1$, $T[m]$ is a flat finite normal subgroup of $N$ of degree $m^r$.
                
                \begin{claim}\label{claim5.1.3} We have $T\cap \Gamma =T[2]$. Moreover, $T[2]$ becomes constant over $\Spec\,\Z[1/2^r]$.
                \end{claim}
                
        \begin{proof}[Proof of \Cref{claim5.1.3}] 
        We have $T(\Z) = T[2](\Z)=\set{\pm 1}^r$ and $T[2]\simeq (\Spec\,\Z[t]/(t^2-1))^r$. It follows that the union of the $2^r$ sections of $T\to \Spec\,\Z$ is equal to $T[2]$ and that $T[2]$ becomes constant over $\Spec\,\Z[1/2^r]$.
        
        By \Cref{claim5.1.2}, $\Gamma\to\Spec\,\Z$ is the union of all sections of $N\to \Spec\,\Z$. Let $s:\Spec\,\Z\to N$ be a section. The composition of $s$ with the projection $N\to W$ is a section $w:\Spec\,\Z\to W$. If $w$ is trivial, then the image of $s$ is contained in $T$, and if $w$ is non-trivial, then the image of $s$ is disjoint from $T$. This shows that $T\cap\Gamma$ is the union of the sections of $N \to \Spec\,\Z$ which map to the identity section of $W$, {\it i.e.} the sections of $T \to \Spec\,\Z$. Therefore $T\cap\Gamma=T[2]$. This proves \Cref{claim5.1.3}.
        \end{proof}
        
\smallskip
We are now ready to finish the proof of~\Cref{red-structure-split}.
Let $S$ be the subgroup of $N$ generated by $\Gamma$ and $\phi_n^{-1}(\Gamma\cap T)$. By construction, $S$ is finite flat over $\Z$. 
\begin{enumerate}[label=(\alph*)]
\item The map $H^1(R,S)\to H^1(R,G)$ is surjective for every semilocal ring $R$; see~\cite[Proposition~3.1]{chernousov2008reduction}.
\item The subgroup of $N$ generated by $T[m]$ and $\Gamma$ is finite and flat. Its degree divides $\on{deg}(T[m])\cdot \on{deg}(\Gamma)=m^r \cdot 2^r n$. Over $\Spec \Z[1/2^r]$, $\Gamma\cap T=T[2]$ by \Cref{claim5.1.3}. Hence over $\Spec \Z[1/2^r]$, $\phi_n^{-1}(\Gamma\cap T) = \phi_n^{-1}(T[2]) = T[2n]$. Setting $m = 2 n$, we see that the degree of $S$ divides $|T[2^r n]| \cdot |\Gamma| = (2 n)^r \cdot (2^r n)$, and part (b) follows.
\item  Over $R = \Z[1/2^r, \zeta]$, both $T[2^r n]$ and $\Gamma$ become constant and hence, so does $S$.
\end{enumerate}
This concludes the proof of~\Cref{red-structure-split}.
\end{proof}

      \begin{cor} \label{cor.versal} Let $G$ be a split reductive group over $\Spec(\Z)$. Then there exists a $G$-torsor
      $\tau \colon X \to B$, where $B$ is a smooth irreducible $\Z$-scheme of finite type and for every field $k$ we have 
      $\ed_k(X_{k(B_k)}) = \ed_k(G_k)$. Here $X_{k(B_k)}$ is obtained by restricting the $G_k$-torsor $X_k \to B_k$ 
      to the generic point of $B_k$.
      \end{cor}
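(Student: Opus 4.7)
The plan is to build $\tau \colon X \to B$ from a faithful linear representation of $G$ over $\Z$. Since $G$ is split reductive, there is a closed $\Z$-subgroup embedding $G \hookrightarrow \GL_{n,\Z}$ for some $n$. Let $G$ act on $X := \GL_{n,\Z}$ by left translation and set $B := \GL_{n,\Z}/G$. The action is free, so once $B$ is known to exist as a scheme, $\tau \colon X \to B$ is automatically a $G$-torsor.

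For the essential dimension computation, fix a field $k$ and base change. The $G_k$-variety $X_k = \GL_{n,k}$ is the free locus of $G_k$ acting by left multiplication on the linear representation $V_k := \Mat_{n,k}$, and $B_k = \GL_{n,k}/G_k$ is irreducible (since $\GL_{n,k}$ is). Because $X_k$ is a $G_k$-invariant dense open in $V_k$, the two share the same function field as $G_k$-varieties, so $\ed_k(X_k;G_k) = \ed_k(V_k;G_k)$. By \Cref{lem.prel2}(a) the right-hand side equals $\ed_k(G_k)$, and by \eqref{e.torsor-variety} the left-hand side equals $\ed_k(\tau_{X_k}) = \ed_k(X_{k(B_k)})$. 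Chaining these identities yields the desired equality $\ed_k(X_{k(B_k)}) = \ed_k(G_k)$.

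The main obstacle is constructing the quotient $B = \GL_{n,\Z}/G$ as a smooth, irreducible $\Z$-scheme of finite type. Since $G$ is $\Z$-smooth and the left translation action on $\GL_{n,\Z}$ is free, fppf descent produces $B$ as an algebraic space of finite presentation over $\Z$ with $X \to B$ smooth and surjective; irreducibility descends from that of $\GL_{n,\Z}$, and smoothness of $B$ over $\Z$ descends along the fppf-cover $X \to B$. To upgrade $B$ from an algebraic space to a scheme I would invoke the standard representability result that $\GL_{n,S}/G$ is representable by an $S$-scheme of finite presentation whenever $G$ is a closed, $S$-flat subgroup scheme of $\GL_{n,S}$ (cf.\ \cite{SGA3I}), applied with $S = \Spec(\Z)$. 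With $B$ in hand, the verification of the essential dimension identity then reduces to the straightforward base-change computation described above.
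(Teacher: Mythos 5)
Your proposal is correct and follows essentially the same route as the paper: the same choice $X = \GL_{n,\Z}$, $B = \GL_{n,\Z}/G$, and the same identification of the generic fiber of $X_k \to B_k$ with the torsor attached to the linear $G_k$-action on $\Mat_{n,k}$, so that \Cref{lem.prel2}(a) together with \eqref{e.torsor-variety} yields $\ed_k(X_{k(B_k)}) = \ed_k(G_k)$. The only divergence is in justifying that $B$ is a smooth $\Z$-scheme: the paper uses reductivity of $G$ and $\GL_n$ to invoke \cite[Theorem 9.4.1]{alper2014adequate}, obtaining an affine quotient and then checking flatness plus smoothness of fibers, whereas you appeal to a blanket representability statement for $\GL_{n,S}/G$ with $G$ flat and closed over an arbitrary base $S$, which is not standard in that generality --- over the Dedekind base $\Spec(\Z)$ the statement you need is indeed a theorem (Anantharaman), so your argument goes through, but the citation should be made precise.
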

      
      \begin{proof} By \cite[Expos\'e~$\textrm{VI}_\textrm{B}$, Proposition~13.2]{SGA3I}
      there exists an embedding $G \hookrightarrow \GL_{n,\Z}$ over $\Z$. Since $\on{GL}_{n,\Z}$ and $G$ are reductive, by \cite[Theorem 9.4.1]{alper2014adequate} the sheaf-theoretic quotient $\GL_{n,\Z}/G$ is represented by an affine $\Z$-scheme. It is clear that $\on{GL}_{n,\Z}/G$ is of finite type over $\Z$ and irreducible. It is flat over $\Z$ by \cite[Tag 02JZ]{stacks-project} and it has smooth fibers, hence it is smooth over $\Z$. We can now take $X = \GL_{n,\Z}$, $B = \GL_{n,\Z}/G$ and $\tau$ the quotient map. 
      For any field $k$, the $G_k$-torsor $X_k \to B_k$ will then be $\GL_{n, k} \to \GL_{n, k}/G_k$. Here $G_k$
      acts on $X_k = \GL_{n, k}$ by right translations. Since this action is birationally isomorphic to the linear action of $G_k$
      on the affine space of $n \times n$ matrices over $k$, we conclude that $\ed_k(X_{k(B_k)}) = \ed_k(G_k)$; see \Cref{lem.prel2}(a).
      \end{proof}
        
        \begin{proof}[Proof of Theorem \ref{thm.split-red}] 
        By~\Cref{lem.prel2}(c),
         $\ed_k(G) \geqslant \ed_{\overline{k}}(G)$ where $\overline{k}$ is the algebraic closure of $k$. Hence, we may, without loss 
        of generality, replace $k$ by $\overline{k}$ and thus assume that $k$ is algebraically closed. 
        By \Cref{lem.prel2}(d), $\ed_k(G) = \ed_{\overline{\mathbb Q}}(G)$
        for any algebraically closed field $k$ of characteristic $0$, where $\cl{\mathbb Q}$ 
        is the field of algebraic numbers.
        Thus we may assume without loss of generality that 
        \begin{equation} \label{e.Qbar}
        k = \overline{\mathbb Q}.
        \end{equation}
        By \Cref{lem.prel2}(e) there exists a number field $L \subset \overline{\mathbb{Q}}$ 
        such that 
    \begin{equation} \label{e.E}
    \ed_{L}(G)= \ed_{\overline{\mathbb Q}}(G).
    \end{equation}
    We may assume that $L$ contains $\zeta$, where $\zeta$ is a primitive root of unity of degree $2^r n$. Otherwise we can 
    simply replace $L$ by $L(\zeta)$. Indeed, by \Cref{lem.prel2},
    \[ \ed_{L}(G) \geqslant \ed_{L(\zeta)}(G) \geqslant \ed_{\overline{\Q}}(G), \]
    and \eqref{e.E} forces $\ed_{L(\zeta)}(G) = \ed_{\overline{\Q}}(G)$.
    
    Let $\mathcal{O}_L$ be the ring of algebraic integers in $L$ and $\frp \subset \mathcal{O}_L$ be a prime ideal lying over $p$. 
    Note that $\zeta$, being an algebraic integer, lies in $\mathcal{O}_L$. 
    Let $A$ be the completion of the localization of $\mathcal{O}_L$ at $\frp$. 
    Then $A$ is a complete local ring whose fraction field $F$ contains $L$ 
       and whose residue field $f$ is a finite field of characteristic $p$.  Recall that  
       $\zeta \in \mathcal{O}_L \subset A$.
       
       Now let $X \to B$ be the $G$-torsor constructed in \Cref{cor.versal}.  Consider the Cartesian diagram
        \[ \xymatrix{   & X_A \ar@{->}[d]  \ar@{->}[r] & X \ar@{->}[d]           \\          
                 \Spec(F) \ar@{->}[rd]^{\alpha}             & B_A \ar@{->}[r] \ar@{->}[d] &   B  \ar@{->}[d]   \\
                          \Spec(f) \ar@{->}[r]           & \Spec(A)    \ar@{->}[r] &  \Spec(\Z)   }\]
        where $\Spec(F)$ is the generic point of $\Spec(A)$ and $\Spec(f)$ is the closed point.
By \Cref{puiseux}(ii), we have
\begin{equation} \label{e.mixed1}
\ed_F(X_{F}) \geqslant \ed_f(X_{f}).
\end{equation}
Note that condition (ii) of \Cref{puiseux} is satisfied here: a finite subgroup $S \subset G_A$ such that the morphism
$H^1(L, S) \to H^1(L, G)$ is surjective for every field $L$ containing $F$  
is constructed in~\Cref{red-structure-split}. Our assumption that $p$ does not divide $2^r n$ allows us to factor the inclusion $\Z \hookrightarrow A$ 
through $\Z[1/2^r, \zeta]$. Thus $S_A$ is constant by~\Cref{red-structure-split}(c), as required.

By \Cref{cor.versal}, we infer
\begin{equation} \label{e.mixed2}
\ed_F(X_{F}) = \ed_F(G_F) \quad \text{and} \quad \ed_{f}(X_{f}) = \ed_{f}(G_{f}).
\end{equation}
We finally get
\[ \ed_k(G_k) \stackrel{(a)}{=} \ed_{\overline{\Q}}(G_{\overline{\Q}}) \stackrel{(b)}{=}
\ed_L(G_L) \stackrel{(c)}{\geqslant} \ed_F(G_F) \stackrel{(d)}{\geqslant} \ed_{f}(G_{f}) \stackrel{(e)}{\geqslant} \ed_{k_0}(G_{k_0})  \]
as desired. Here (a) and (b) follow from our assumptions \eqref{e.Qbar} and \eqref{e.E} on $k$ and $L$, (c) from \Cref{lem.prel2}(c) using
the inclusion $L \subset F$, 
(d) from~\eqref{e.mixed1} and \eqref{e.mixed2}, and (e) again from \Cref{lem.prel2}(c) using the inclusions $f \hookrightarrow \overline{\F_p} \hookrightarrow k_0$.
This completes the proof of \Cref{thm.split-red}.
        \end{proof}

        \section{Proof of Theorem \ref{puiseux}(iii)}
        \label{sect.puiseux(iii)}
        
        We have not been able to adapt the Galois-theoretic approach of \Cref{sect.puiseux(i)-(ii)} to prove \Cref{puiseux} in case (iii). 
        The proof below is based on valuation-theoretic methods in the spirit of~\cite{brosnan2018essential}; see 
        also~\cite[Lemma~6.2]{fakhruddin2021finite}.

        \begin{prop} \label{prop1b} Let $L$ be a field and $v \colon L^{\times} \to \mathbb Z$ be a discrete valuation. 
        Assume that a finite group $G$ acts faithfully on $L$ and that $v$ is invariant under this action.
        Assume further that     $k$ is a subfield of $K = L^G$ such that $v(k^{\times}) = \Z$. 
        Denote the residue fields of $L$, $K$, and $k$ by $L_0$, $K_0$ and $k_0$, respectively.
        Finally, assume that $G$ is weakly tame at $p = \Char(k_0) \geqslant 0$. Then
\begin{enumerate}[label=(\alph*)]
\item $G$ acts faithfully on $L_0$, and
\item $\ed_{k}(L/K) \geqslant \ed_{k_0}(L_0/K_0)$.
\end{enumerate}        
        \end{prop}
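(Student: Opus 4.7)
My plan for part (a) is to analyze the inertia subgroup $I\subseteq G$, defined as the kernel of the $G$-action on $L_0$. Since $I$ is normal in $G$, and the standard filtration of the inertia produces a normal subgroup $I_1\subseteq I$ which is a $p$-group (the wild inertia; trivial if $p=0$), I will argue that $I_1$ is the unique Sylow $p$-subgroup of $I$, hence characteristic in $I$, hence normal in $G$. The weak tameness hypothesis then forces $I_1=1$. The tame quotient $I/I_1$ embeds into $L_0^\times$ via the character $\sigma\mapsto \sigma(\pi)/\pi \pmod{\mathfrak{m}_L}$ for any uniformizer $\pi$ of $L$. Using $v(k^\times)=\Z$, I can choose such a $\pi$ inside $k\subseteq K=L^G$; then $\sigma(\pi)=\pi$ for every $\sigma\in G$, so the character is trivial, giving $I=I_1=1$.

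For part (b), my strategy is to produce a descent of $L_0/K_0$ over $k_0$ with small transcendence degree by reducing a descent of $L/K$. Pick $L'/K'$ realizing $\ed_k(L/K)=\trdeg_k K'$; after replacing $K'$ by $K'\cdot k$ and $L'$ by $L'\cdot k$ (which preserves the transcendence degree and the descent), I may assume $k\subseteq K'$. The restriction $v|_{L'}$ is then a discrete valuation whose value group is all of $\Z$, and $L'/K'$ equipped with this valuation satisfies all the hypotheses of the proposition; applying part (a) to it yields that $G$ acts faithfully on the residue field $L'_0$, so $L'_0/K'_0$ is Galois with group $G$.

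Next, I will observe that reduction modulo the maximal ideals of the valuation rings $\mathcal{O}_{K'}=K'\cap\mathcal{O}_K$ and $\mathcal{O}_{L'}=L'\cap\mathcal{O}_L$ induces $G$-equivariant embeddings $K'_0\hookrightarrow K_0$ and $L'_0\hookrightarrow L_0$. Any $\sigma\in G$ fixing $L'_0\cdot K_0$ must fix $L'_0$ pointwise, hence is trivial by part (a) applied to $L'/K'$; consequently $L_0=L'_0\cdot K_0$, so $L_0/K_0$ descends to $L'_0/K'_0$ over $k_0$. Finally, since the value group of $v|_{K'}$ equals that of $v|_k$, the Abhyankar inequality yields $\trdeg_{k_0}K'_0\leqslant \trdeg_k K'$, and chaining gives
\[ \ed_{k_0}(L_0/K_0)\;\leqslant\;\trdeg_{k_0}K'_0\;\leqslant\;\trdeg_k K'\;=\;\ed_k(L/K). \]

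The delicate point is the compatibility $L_0=L'_0\cdot K_0$: it relies on (a) twice---applied to $L/K$ to force $[L_0:K_0]=|G|$, and to $L'/K'$ to force faithful residue action---and it is precisely this step that the weak tameness hypothesis secures. Without it, a nontrivial inertia could cause the residue extension to degenerate, breaking both the construction of the descent and the degree count.
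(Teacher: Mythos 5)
Your proposal is correct and follows essentially the same route as the paper: part (a) is the statement that the inertia is a (necessarily trivial, by weak tameness) $p$-group when the value groups agree — which the paper obtains by citing \cite[Proposition 2.3]{brosnan2018essential} rather than by your direct wild-inertia/character argument — and part (b) reduces a minimal descent $L'/K'$ of $L/K$ modulo the valuation and invokes the Abhyankar inequality (the paper's \cite[Lemma 2.1]{brosnan2018essential}) exactly as you do. The one step to make fully explicit is that $[L'_0:K'_0]=|G|$ (via $[L'_0:K'_0]\leqslant [L':K']=|G|$ together with faithfulness from (a)), which gives both $(L'_0)^G=K'_0$ and the linear disjointness needed for $L_0\simeq L'_0\otimes_{K'_0}K_0$; you gesture at this degree count, and it matches the paper's use of it.
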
      
        
        \begin{proof} (a) Let $\Delta$ be the kernel of the $G$-action on $L_0$. By our assumption, $v(k^{\times}) = \Z$
                and thus $v(K^{\times}) = \Z$. If $\Char(k) = 0$, then \cite[Proposition 2.3]{brosnan2018essential}
                tells us that $\Delta = 1$ and we are done. If $\Char(k) = p > 0$, then \cite[Proposition 2.3]{brosnan2018essential}
                tells us that $\Delta$ is a $p$-group. Since we are assuming that $G$ is weakly tame at $p$, this implies that
                $\Delta = 1$ in this case as well. 
        
                (b) Suppose $\ed_k(L/K) = d$. This means that there exists an intermediate field $k \subset K' \subset K$ and a
                $G$-Galois extension $L'/K'$ such that $L = L' \otimes_{K'} K$ and $\trdeg_k(K') = d$. 
                Let $K'_0$ and $L'_0$ be the residue fields for the restriction of $v$ to $K'$ and $L'$, respectively.
                
                By part (a), the $G$-action on $L'_0$ is faithful. We claim that $(L_0')^G = K_0'$.
                Clearly $K'_0 \subset (L'_0)^G \subset L'_0$ and, since $G$ acts faithfully on $L'_0$, we have $[L'_0 : (L'_0)^G] = |G|$. 
                On the other hand, by \cite[Corollary XII.6.3]{lang2002algebra} the degree $[L'_0 : K'_0]$ divides $[L_0:K_0] = |G|$, hence $K'_0 = (L'_0)^G$. This proves the claim.
                
                We thus obtain the following diagram of field extensions:
                \[ \xymatrix{   L' \ar@{-}[d]_{\text{$G$-Galois}}  \ar@{-}[r] & L \ar@{-}[d]^{\text{$G$-Galois}}  & & & & L'_0 \ar@{-}[d]_{\text{$G$-Galois}}  \ar@{-}[r] & L_0 \ar@{-}[d]^{\text{$G$-Galois}}  \\          
                        K' \ar@{-}[r] \ar@{-}[d] &                K  & &\text{and} &  & K'_0 \ar@{-}[d]  \ar@{-}[r] & K_0  \\    
                        k & & & & & k_0 & }\]
                where the right side are obtained from the left side by passing to residue fields. In other words,
                the $G$-Galois extension $L_0/K_0$ descends to $K_0'$. By \cite[Lemma 2.1]{brosnan2018essential},
                $\trdeg_{k_0}(K'_0) \leqslant \trdeg_k(K')$. 
                We conclude that
                \[ \ed_{k_0}(L_0/K_0)  \leqslant \trdeg_{k_0}(K'_0) \leqslant \trdeg_k(K') = d = \ed_k(L/K), \]
                as desired.
        \end{proof}     
        
                We are now ready to complete the proof of \Cref{puiseux} assuming (iii).          
                The class $\alpha \in H^1(R, G)$ is represented by a finite \'etale $R$-algebra $E$ 
                such that $G$ acts faithfully on $E$ over $R$ and transitively permutes the connected components of $\Spec (E)$.
                Recall that $K$ is the field of fractions of $R$.
                Since $R$ is complete, it is Henselian, hence every connected component of $\Spec E$ is local, {\it i.e.} is of the form
                $\Spec(E_1)$, where $E_1$ is a discrete valuation ring with valuation $\nu \colon E_1 \setminus \{ 0 \} \to \Z$ 
                extending the valuation on $R$. Denote the fraction field of $E_1$ by $L$ and
                the residue field of $E_1$ by $L_0$. Let us now consider two cases.
                
\smallskip
                {\bf Case 1.} $E$ is integral, {\it i.e.} $E = E_1$. In this case the class $\alpha_K \in H^1(K, G)$ is
                represented by the field extension $L/K$, where $L$ is the fraction field of $E$.
                By~\Cref{prop1b}(a) the class $\alpha_{K_0}$ is represented by 
                the field extension $L_0/K_0$.  The inequality  
                \begin{equation} \label{e.puiseux(iii)} \ed_{k_0}(\alpha_{K_0})\leqslant \ed_{k}(\alpha_{K})
                \end{equation}
                of \Cref{puiseux} now translates to the inequality 
                \[ \ed_{k_0}(L_0/K_0) \leqslant \ed_k(L/K)  \]
                of \Cref{prop1b}(b).

        \smallskip
        {\bf Case 2.} $E_1 \neq E$. To handle this case we will need the following.
        
        \begin{lemma} \label{lem2.1} Let $G$ be a finite discrete group, $H$ be a subgroup of $G$, $K/k$ be a field extension and $\mathcal{T} \to \Spec(K)$ be an $H$-torsor.
                Let $\mathcal{T}_G = \mathcal{T} \times^H G$ be the $G$-torsor induced by $\mathcal{T}$. Then $\ed_k(\mathcal{T}; H) = \ed_k(\mathcal{T} \times^H G; G)$.
        \end{lemma}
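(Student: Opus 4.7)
The plan is to prove the two inequalities $\ed_k(\mathcal{T}\times^H G;G)\leqslant \ed_k(\mathcal{T};H)$ and $\ed_k(\mathcal{T};H)\leqslant \ed_k(\mathcal{T}\times^H G;G)$ separately. The first direction is routine: any descent of $\mathcal{T}$ to an $H$-torsor $\mathcal{T}_0$ over some intermediate field $k\subset K_0\subset K$ yields a descent $\mathcal{T}_0\times^H G$ of the induced $G$-torsor to $K_0$, because the associated bundle construction commutes with base change.

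For the reverse inequality, the key input is the standard bijection between $H$-reductions of a $G$-torsor $\mathcal{S}$ over a field $F$ and $F$-points of the finite \'etale quotient $\mathcal{S}/H$ (equivalently, $G$-equivariant morphisms $\mathcal{S}\to G/H$). Choose a descent $\mathcal{S}_0\to \Spec(K_0)$ of $\mathcal{T}_G:=\mathcal{T}\times^H G$ with $\trdeg_k(K_0)=\ed_k(\mathcal{T}_G;G)$. The particular $H$-reduction $\mathcal{T}$ of $(\mathcal{S}_0)_K=\mathcal{T}_G$ corresponds to a $K$-point of $(\mathcal{S}_0/H)_K$, i.e.\ a $K_0$-algebra homomorphism $A_0\to K$, where $A_0:=\Gamma(\mathcal{S}_0/H,\mathcal{O})$ is a finite \'etale $K_0$-algebra. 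Writing $A_0\cong\prod_i F_i$ as a product of finite separable extensions of $K_0$, this homomorphism factors through the projection onto some factor $F_j$ and a $K_0$-embedding $F_j\hookrightarrow K$.

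The tautological $F_j$-point of $\mathcal{S}_0/H$ coming from the projection $A_0\twoheadrightarrow F_j$ then determines an $H$-reduction $\mathcal{T}_0$ of $(\mathcal{S}_0)_{F_j}$, which is an $H$-torsor over $F_j$. By the naturality of the reduction/section bijection, base-changing $\mathcal{T}_0$ along $F_j\hookrightarrow K$ recovers the original $H$-torsor $\mathcal{T}$. Since $F_j/K_0$ is algebraic we have $\trdeg_k(F_j)=\trdeg_k(K_0)$, which gives $\ed_k(\mathcal{T};H)\leqslant \trdeg_k(F_j)=\ed_k(\mathcal{T}_G;G)$, as desired.

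The main subtle point to verify carefully is the last naturality claim---that the $H$-torsor constructed over $F_j$ really base-changes to $\mathcal{T}$ and not to some other $H$-reduction of $\mathcal{T}_G$. This is immediate once one traces through the correspondence between $H$-reductions and sections of $\mathcal{S}/H$, since the $K$-point of $\mathcal{S}_0/H$ obtained by composing $\Spec(K)\to\Spec(F_j)$ with $\Spec(F_j)\to\mathcal{S}_0/H$ is by construction the $K$-point we started from.
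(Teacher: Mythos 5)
Your proof is correct. Note that the paper does not actually prove this lemma: it simply cites \cite[Proposition 2.17]{brosnan2007essential} and \cite[Lemma 2.4]{bresciani}, so there is no in-text argument to compare against. What you have written is essentially the standard argument that those references work out: the easy inequality $\ed_k(\mathcal{T}\times^H G;G)\leqslant \ed_k(\mathcal{T};H)$ comes from compatibility of induction with base change, and the reverse inequality uses the bijection between $H$-reductions of a $G$-torsor $\mathcal{S}$ over a field $F$ and $F$-points of the finite \'etale scheme $\mathcal{S}/H$ (the fiber of $\mathcal{S}\to\mathcal{S}/H$ over such a point being the corresponding $H$-torsor), together with the observation that a $K$-point of a finite \'etale $K_0$-scheme is defined over a finite, hence transcendence-degree-preserving, extension $F_j$ of $K_0$. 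The one step you flag as subtle --- that the $H$-reduction over $F_j$ base-changes back to $\mathcal{T}$ itself --- is handled correctly: the correspondence between reductions and sections is functorial in the base, and the $K$-point factors through the $F_j$-point by construction. All the implicit existence statements ($\mathcal{S}_0/H$ is a scheme, $\mathcal{S}_0\to\mathcal{S}_0/H$ is an $H$-torsor) are unproblematic here because $G$ is finite and constant, so everything in sight is finite \'etale over a field.
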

        
        \begin{proof} A proof is implicit in \cite[Proposition 2.17]{brosnan2007essential} and is worked out in detail in
        \cite[Lemma 2.4]{bresciani}.
        \end{proof}
        
        We can now complete the proof of \Cref{puiseux} (iii) in Case 2. Denote the stabilizer of $E_1$ in $G$ by $H$. Then
        $\Spec(E) = \Spec(E_1) \times^H G$. In other words, the $G$-torsor $\Spec(E) \to \Spec(R)$ is induced by the $H$-torsor 
        $\Spec(E_1) \to \Spec(R)$ is an $H$-torsor. Replacing $E$ by $E_1$ and $G$ by $H$, and using~\Cref{lem2.1}, we reduce Case 2 to Case 1. This completes the proof of
        \Cref{puiseux}.
        \qed

\smallskip      
We conclude this section with a variant of \Cref{puiseux}, where $A$ and $R$ are not required to be complete.

                        \begin{thm} \label{cor.not-complete}
                Let $A$ be a discrete valuation ring with maximal ideal $\mathfrak{m}$, fraction field $k$, residue field $k_0$ and $\mathfrak{m}$-adic completion $\hat{A}$.
                Set $p := \on{char}(k_0) \geqslant 0$ and let $G$ be a smooth affine group scheme over $A$, satisfying 
                one of the conditions (i), (ii) or (iii) below. Let $R\supset A$ be a discrete valuation ring with fraction field $K\supset k$ and residue field $K_0\supset k_0$, and assume that $\mathfrak{m}$ is contained in the maximal ideal of $R$. Then for every $\alpha\in H^1(R,G)$ we have \[\ed_{k_0}(\alpha_{K_0})\leqslant \ed_{k}(\alpha_{K}).\]
                Furthermore, if $\hat{A}=k_0[[t]]$ and $G_{\hat{A}}$ is defined over $k_0$, then the above inequality is an equality.
                
                        \begin{enumerate}[label=(\roman*)]
                        \item $p = 0$, and there exist a section $\sigma:k_0\to \hat{A}$ of the projection $\hat{A}\to k_0$, and a $k_0$-group $H$ such that $G\simeq \sigma^*H$.
                        \item $G^{\circ}$ is reductive, $G/G^{\circ}$ is $A$-finite, and there exists a finite subgroup $S\subset G(\hat{A})$ such $S$ is tame at $p$ and
                        for every field $L$ containing $k$ the natural map $H^1(L,S)\to H^1(L,G)$ is surjective.
                        \item $G_{\hat{A}} = S_{\hat{A}}$, where $S$ is an abstract finite group which is weakly tame at $p$.
                \end{enumerate}
                \end{thm}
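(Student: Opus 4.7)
The plan is to reduce \Cref{cor.not-complete} to \Cref{puiseux} by $\mathfrak{m}$-adically completing both discrete valuation rings. The hypothesis $\mathfrak{m}\subset \mathfrak{m}_R$ implies that for a uniformizer $t$ of $A$ one has $t=u\pi^e$ with $u\in R^{\times}$ and $e\geqslant 1$, so $\mathfrak{m}^n\subset \mathfrak{m}_R^{ne}$ and the $\mathfrak{m}$-adic topology on $A$ coincides with the restriction of the $\mathfrak{m}_R$-adic topology on $R$. Consequently the induced map $\hat A\to\hat R$ is injective and $\mathfrak{m}_{\hat A}\subset\mathfrak{m}_{\hat R}$; thus $\hat A,\hat R$ are complete discrete valuation rings with residue fields $k_0,K_0$ and fraction fields $\hat k\supset k$, $\hat K\supset K$.

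The three alternative hypotheses on $G$ translate to the corresponding hypotheses on $G_{\hat A}$ required by \Cref{puiseux}: cases (i) and (iii) are already phrased in terms of $\hat A$, and in case (ii) the reductiveness of $G^{\circ}$ and the $A$-finiteness of $G/G^{\circ}$ are preserved by the flat base change $A\to\hat A$. Let $\hat\alpha\in H^1(\hat R,G_{\hat A})$ denote the pullback of $\alpha$; since $\hat R$ has residue field $K_0$ we have $\hat\alpha_{K_0}=\alpha_{K_0}$. Applying \Cref{puiseux} to $(\hat A,\hat R,G_{\hat A},\hat\alpha)$ therefore yields $\ed_{k_0}(\alpha_{K_0})\leqslant\ed_{\hat k}(\hat\alpha_{\hat K})$.

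To replace $(\hat k,\hat K)$ by $(k,K)$ on the right-hand side I would invoke \Cref{lem.prel1}: part (a) applied to the extension $\hat K/K$ gives $\ed_k(\alpha_K)\geqslant\ed_k(\hat\alpha_{\hat K})$, and part (b) applied to the chain $k\subset\hat k\subset\hat K$ gives $\ed_k(\hat\alpha_{\hat K})\geqslant\ed_{\hat k}(\hat\alpha_{\hat K})$. Concatenating these with the previous inequality gives the main inequality $\ed_{k_0}(\alpha_{K_0})\leqslant\ed_k(\alpha_K)$.

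For the final assertion, assume $\hat A=k_0[[t]]$ and $G_{\hat A}$ is defined over $k_0$. The equality clause of \Cref{puiseux} gives $\ed_{\hat k}(\hat\alpha_{\hat K})=\ed_{k_0}(\alpha_{K_0})$, which together with the chain above recovers the main inequality a second time but does not by itself supply the reverse. The main obstacle is thus to show $\ed_k(\alpha_K)\leqslant\ed_{k_0}(\alpha_{K_0})$: in the complete case of \Cref{puiseux} this step used Cohen's structure theorem to realize $R=K_0[[\pi]]$ and hence embed $K_0\hookrightarrow K$, after which \Cref{grothendieck-serre}(a) identifies $\alpha_K$ with the base change of $\alpha_{K_0}$ and \Cref{lem.prel1}(a),(b) finish the job. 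When $R$ is not complete this embedding of $K_0$ into $K$ can fail, so I would instead lift a descent of $\alpha_{K_0}$ over a finitely generated subfield $K_0'\subset K_0$ to a class over $K_0'[[t]]$ via \Cref{grothendieck-serre}(a), obtain a descent of $\hat\alpha_{\hat K}$ to $K_0'((t))\subset\hat K$, and then apply a spreading-out argument to produce a descent of $\alpha_K$ inside $K$ of the same transcendence degree over $k$.
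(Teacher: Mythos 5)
Your first three paragraphs reproduce the paper's argument exactly: complete both rings, check that $\hat A\hookrightarrow\hat R$ is an inclusion of complete discrete valuation rings with $\mathfrak m_{\hat A}\subset\mathfrak m_{\hat R}$ and the same residue fields, apply \Cref{puiseux} to get $\ed_{k_0}(\alpha_{K_0})\leqslant\ed_{\hat k}(\alpha_{\hat K})$, and then use \Cref{lem.prel1}(b) for $k\subset\hat k\subset\hat K$ and \Cref{lem.prel1}(a) for $\hat K/K$ to arrive at $\ed_k(\alpha_K)$. This is precisely the chain $\ed_{k_0}(\alpha_{K_0})\leqslant\ed_{\hat k}(\alpha_{\hat K})\leqslant\ed_k(\alpha_{\hat K})\leqslant\ed_k(\alpha_K)$ displayed in the paper's proof, so the main inequality is in order.

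Concerning the ``Furthermore'' clause, you are right that the equality does not follow formally from this chain together with the equality case of \Cref{puiseux}: the latter gives $\ed_{k_0}(\alpha_{K_0})=\ed_{\hat k}(\alpha_{\hat K})$, which only makes the first link an equality, while the reverse inequality $\ed_k(\alpha_K)\leqslant\ed_{k_0}(\alpha_{K_0})$ requires turning a field of definition living in $\hat K$ (or in $K_0$, via $\hat R\simeq K_0[[\pi]]$) into one lying inside $K$ and containing $k$. It is worth noting that the paper's own proof consists solely of the displayed chain and says nothing further about this clause, so you have not missed a trick that the paper supplies. That said, the final step of your sketch --- ``apply a spreading-out argument to produce a descent of $\alpha_K$ inside $K$ of the same transcendence degree over $k$'' --- is exactly where the difficulty sits and is not justified as written: the descent you construct lives over a subfield of $K_0'((t))\subset\hat K$, and since $R$ need not contain a coefficient field and $K$ is not stable under the completions involved, there is no evident way to transport that subfield (or a conjugate of it) into $K$. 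As it stands, your write-up proves the inequality but leaves the equality clause open.
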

                
                \begin{proof} We have
                        \[\ed_{k_0}(\alpha_{K_0}) \stackrel{(a)}{\leqslant} \ed_{\hat{k}}(\alpha_{\hat{K}}) \stackrel{(b)}{\leqslant} \ed_{k}(\alpha_{\hat{K}}) \stackrel{(c)}{\leqslant}       \ed_k(\alpha_K)\]
                        where $\hat{R}$ is the completion of $R$, and $\hat{k}$, $\hat{K}$ 
                        are the fraction fields of $\hat{A}$, $\hat{R}$, respectively. 
                        Here (a) follows from~\Cref{puiseux}, (b) from \Cref{lem.prel1}(b), and (c) from \Cref{lem.prel1}(a).
       \end{proof}

        \section{Proof of Theorem~\ref{thm.map}}
        \label{sect.proof}
        
        Throughout this section, 
\begin{equation}        \label{e.assumptions}
        \text{$G$ will denote a linear algebraic group and
        $X$ a 
        $G$-variety},
        \end{equation}
        both defined over a field $k$. We begin with a brief review of the concepts that will be used in the proof.
        
        \subsection*{Twisting}
        For any field $K/k$ and $G$-torsor $\tau \colon \mathcal{T} \to \Spec(K)$, we can define the algebraic space ${\, }^{\tau} X$ over $K$
        as the quotient of $\mathcal{T} \times_k X_K$ by the diagonal action of $G$. Note that in this situation $G$ acts freely on $\mathcal{T} \times_K X_K$, 
        and ${\, }^{\tau} X$ can be defined by descent so that the quotient morphism $\mathcal{T} \times_K X_K \to {\, }^{\tau} X$ is a $G$-torsor.
        If $X$ is quasi-projective, then ${\, }^{\tau} X$ is a $K$-variety and not just an algebraic space. 
        The algebraic space ${\, }^{\tau} X$ is separated and of finite type over $K$. This may be checked after splitting $\tau$;
        when $\tau$ is split, we have ${ \, }^{\tau} X \simeq X_K$, where $\simeq$ denotes isomorphism over $K$.
        In general, ${\, }^{\tau} X$ does not inherit a 
    $G$-action from $X$. There is a natural action of the twisted group ${\, }^{\tau} G$ on ${\, }^{\tau} X$, 
        but we will not need it in the sequel. 
        For a more detailed discussion of the twisting construction and further references, see~\cite[Section 3]{duncan2015versality}.
        
        \subsection*{Large fields} A field $K$ is called {\em large} if the following property holds for every irreducible
        $K$-variety $Z$: if $Z$ has a smooth $K$-point, then $K$-points are dense in $Z$. This notion is due to F.~Pop. 
        Note that if $Z$ is irreducible and has a smooth $K$-point, then it is absolutely irreducible.
        A variant of this definition assumes that $Z$ is an irreducible $K$-curve, rather than an irreducible $K$-variety of
        arbitrary dimension. This a priori weaker property of $K$ turns out to be equivalent. The most important examples of large fields 
        for this paper are fields of Laurent series $K((t))$, where $K$ is an arbitrary field (see~\cite[Section 1A(2)]{pop-large}) and $p$-closed fields (see~\cite[p.~360]{colliot-annals} or~\cite[Theorem 1.3]{pop-large}). See~\Cref{sect.ed-at-p} for the definition of a $p$-closed field and~\cite{pop-large} for a detailed discussion of large fields, including further examples.

        \begin{lemma}\label{large-algebraic-space}
Let $K$ be a large field and $Z$ be an irreducible separated algebraic space of finite type over $K$. If $Z$ has a smooth $K$-point, then $K$-points are dense in $Z$.
\end{lemma}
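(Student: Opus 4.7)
The plan is to reduce the statement from algebraic spaces to schemes by passing to an étale neighborhood of $z$ that is a scheme, in which $z$ lifts to a $K$-point, and then apply the hypothesis that $K$ is large to that scheme.

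First I would shrink $Z$ to its smooth locus. Smoothness is an étale-local property, so the smooth locus $Z^{\mathrm{sm}} \subset Z$ is an open subspace; it is non-empty (it contains the given smooth $K$-point $z$) and, since $Z$ is irreducible, it is dense in $Z$. Density of $K$-points in $Z^{\mathrm{sm}}$ will then imply density in $Z$, so I may assume without loss of generality that $Z$ is smooth over $K$.

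Next, I would invoke the local structure of algebraic spaces to produce an étale morphism $U \to Z$ with $U$ an affine scheme and a $K$-point $u \in U(K)$ lifting $z$; after shrinking to an affine open neighborhood of $u$, I may also take $U$ to be irreducible. Since $U \to Z$ is étale and $z$ is a smooth $K$-point of $Z$, the lift $u$ is a smooth $K$-point of $U$. Thus $U$ is an irreducible $K$-variety with a smooth $K$-point, and the largeness hypothesis on $K$ gives that $U(K)$ is Zariski dense in $U$.

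Finally, the étale morphism $U \to Z$ is open, so its image $U'$ is a non-empty open subspace of $Z$ and is dense in $Z$ by irreducibility. The induced surjection $U \twoheadrightarrow U'$ is étale and therefore open, and under any open surjection a dense subset pushes forward to a dense subset. Consequently $K$-points are dense in $U'$, and since $U'$ is dense in $Z$, they are dense in $Z$, as desired.

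The main obstacle is the second step, namely the production of an étale neighborhood of $z$ by a \emph{scheme} together with a $K$-lift of $z$. This is a structural fact about (separated, finite type) algebraic spaces rather than a computation; some care is needed because an arbitrary étale presentation $V \to Z$ need not admit a $K$-rational section over $z$ (its fiber over $z$ is only an étale $K$-scheme, whose residue fields could be nontrivial extensions of $K$). The content of the lemma is that one can always refine the presentation to achieve residue field $K$ at a lift of $z$, which is presumably what the authors' acknowledgement to Vistoli refers to.
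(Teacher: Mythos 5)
Your reduction to schemes hinges entirely on the step you yourself flag as ``the main obstacle'': producing an \'etale morphism $U \to Z$ from a scheme together with a \emph{$K$-rational} point $u$ lying over $z$. You do not prove this, and it is not a standard citable fact: for an arbitrary \'etale presentation $V \to Z$ the fiber over $z$ is a nonempty \'etale $K$-scheme, i.e.\ a finite disjoint union of spectra of finite separable extensions of $K$, and there is no general mechanism for refining the presentation so that one of these extensions becomes trivial (the rational point $z$ is not a geometric point of the \'etale topos when $K$ is not separably closed, so \'etale covers need not split over it). Asserting that ``the content of the lemma is that one can always refine the presentation'' and that this is ``presumably what the acknowledgement to Vistoli refers to'' is deferring the entire difficulty to an unproven claim; everything else in your argument (shrinking to the smooth locus, openness of \'etale maps, pushing density forward along a surjection) is routine. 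So as written this is a genuine gap, not a proof.

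It is worth noting that the paper's argument is engineered precisely to sidestep this issue. Instead of seeking a scheme neighborhood of $z$ itself (which, being a point of possibly high codimension, need not admit one), the authors blow up $Z$ at $z$. The exceptional divisor is $\P(T_{Z,z})$, a projective space over $K$, so it has dense $K$-points (all of them smooth, by smoothness of $z$), and its \emph{generic} point has codimension $1$ in the blow-up $Z'$. By \cite[Tag 0ADD]{stacks-project}, a codimension-$\leqslant 1$ point of a decent locally Noetherian algebraic space admits a Zariski-open \emph{scheme} neighborhood $U$; then $U$ meets the exceptional divisor in a dense open subset, hence contains smooth $K$-points, largeness applies to the irreducible scheme $U$, and density is pushed down along the surjective blow-up map. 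If you want to rescue your approach, you would need to either prove the existence of an elementary \'etale neighborhood of $z$ with residue field $K$ (at least at smooth points), or replace that step by a device, such as the blow-up, that manufactures rational points at which scheme neighborhoods are actually known to exist.
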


\begin{proof}
    Let $z\in Z(K)$ be a smooth point, and let $Z':=\on{Proj}_Z(\oplus_{i\geqslant 0} I^i)\to Z$ be the blow-up of $Z$ at $z$, where  $I$ is the ideal sheaf of $z$; see \cite[Tag 085Q]{stacks-project}. It follows that the exceptional divisor $E$ of $Z'\to Z$ is given by $\on{Proj}_{z}(\oplus_{i\geqslant 0} I^i/I^{i+1})=\P(T_{Z,z})$, where $T_{Z,z}$ denotes the tangent space at $z$. Since $K$ is large, it is infinite, hence $K$-points are dense in $E$. Since $z$ is smooth, any $K$-point in $E$ is smooth. By \cite[Tag 0ADD]{stacks-project}, there exists an open embedding $U\hookrightarrow Z'$ such that $U$ is a scheme and $U\cap E\neq \emptyset$. Thus $U$ contains smooth $K$-points and so, since $K$ is large and $U$ is irreducible, $K$-points are dense in $U$. Since the blow-up map $Z'\to Z$ is surjective, we conclude that $K$-points are dense in $Z$, as desired.
\end{proof}

        \subsection*{$\tau$-versality}
        Suppose $K/k$ is a field extension and $\tau \colon \mathcal{T} \to \Spec(K)$ is a $G$-torsor. We say that $X$ is $\tau$-weakly versal 
        if there exists a $G$-equivariant morphism $\mathcal{T} \to X_K$ defined over $K$. 
        We say that $X$ is $\tau$-versal if every dense $G$-invariant open subvariety of $X$ is $\tau$-weakly versal.
        Note that if $X$ is primitive, then every non-empty $G$-invariant open subvariety is automatically dense.

\smallskip
With these preliminaries out of the way, we can get started on the proof of \Cref{thm.map}.

        \begin{lemma} \label{lem.versality1} Let $G$ and $X$ be as in~\eqref{e.assumptions}, 
                $K/k$ be a field extension, and $\tau \colon \mathcal{T} \to \Spec(K)$ be a $G$-torsor. 
                
                \smallskip
                (a) $X$ is $\tau$-weakly versal if and only if the twisted variety ${\, }^\tau \! X$ has a $K$-point.
                
                \smallskip
                (b) Assume further that $X$ is smooth, and $K$ is a large field. 
                Then $X$ is weakly $\tau$-versal if and only if $X$ is $\tau$-versal.
        \end{lemma}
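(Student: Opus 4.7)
For part (a), the plan is to exhibit a natural bijection between $G$-equivariant $K$-morphisms $\mathcal{T}\to X_K$ and elements of $({}^\tau\!X)(K)$ via the $G$-torsor $q\colon\mathcal{T}\times_K X_K\to{}^\tau\!X$. Given a $G$-equivariant morphism $f$, the graph map $t\mapsto(t,f(t))$ is a $G$-equivariant section of the first projection $\mathcal{T}\times_K X_K\to\mathcal{T}$, so its composition with $q$ is $G$-invariant and therefore descends to a $K$-point of ${}^\tau\!X$. Conversely, pulling back $q$ along a $K$-point $x\in({}^\tau\!X)(K)$ yields a $G$-torsor $P\to\Spec K$ equipped with $G$-equivariant morphisms to $\mathcal{T}$ and to $X_K$; the former is a morphism of $G$-torsors over $K$, hence an isomorphism, and composing its inverse with $P\to X_K$ produces the desired morphism $\mathcal{T}\to X_K$. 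One checks directly that these constructions are mutually inverse.

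For part (b), one direction is immediate from the definition. For the converse, suppose $X$ is $\tau$-weakly versal, so by part (a) we have $x\in({}^\tau\!X)(K)$, and let $U\subset X$ be any dense $G$-invariant open subvariety; I must produce a $K$-point of ${}^\tau\!U$. First I claim that ${}^\tau\!U$ is a dense open subspace of ${}^\tau\!X$: étale-locally, $\tau$ is split, which identifies the inclusion ${}^\tau\!U\hookrightarrow{}^\tau\!X$ with $U_K\hookrightarrow X_K$ after an étale cover, and both openness and density descend. Smoothness of $X$ descends similarly to smoothness of ${}^\tau\!X$ over $K$, so $x$ is a smooth $K$-point. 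Let $Z$ be the irreducible component of ${}^\tau\!X$ containing $x$; by \Cref{large-algebraic-space}, $K$-points are dense in $Z$. Since a dense open subspace meets every irreducible component, $Z\cap{}^\tau\!U$ is a nonempty open subset of $Z$, hence dense in $Z$, and therefore contains a $K$-point. This gives the required $K$-point of ${}^\tau\!U$, proving that $U$ is $\tau$-weakly versal.

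The main subtlety lies in part (b): because ${}^\tau\!X$ is only an algebraic space in general, the standard scheme-theoretic formulation of the large-field property does not directly apply, and one must invoke the algebraic-space version \Cref{large-algebraic-space}. Once that is in hand, the remainder reduces formally to étale-local triviality of $\tau$ and the descent of openness, density, smoothness, and irreducible components.
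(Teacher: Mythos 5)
Your proof is correct and follows essentially the same route as the paper: part (b) uses the same ingredients in the same way (the twisted open subspace ${}^\tau U$ is dense open in ${}^\tau X$, the $K$-point is smooth and so lies on a unique irreducible component, and Lemma~\ref{large-algebraic-space} gives density of $K$-points there), merely phrased directly rather than by contradiction. For part (a) the paper simply cites Duncan--Reichstein, and your graph/pullback bijection is the standard argument given there.
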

        
        \begin{proof} Part (a) is proved at the beginning of Section 4 in~\cite{duncan2015versality}. Note that the proof relies on
                \cite[Proposition 3.2]{duncan2015versality}, where $X$ is not assumed to be irreducible.
                
                (b) If $X$ is $\tau$-versal, then $X$ is obviously weakly $\tau$-versal. Conversely, suppose $X$ is weakly $\tau$-versal. We want to show that
                $X$ is $\tau$-versal. Assume the contrary: there exists a Zariski dense open $G$-invariant subvariety $U$ of $X$ which 
                is not weakly $\tau$-versal. Then ${\, }^\tau \! U$ is a dense open algebraic subspace of ${\, }^\tau \! X$ 
                (see \cite[Corollary 3.4]{duncan2015versality}). By part (a), ${\, }^\tau \! U$ has no $K$-points but
                ${\, }^\tau \! X$ has a $K$-point. Denote this point by $x$. 
                Since $X$ is smooth, so is ${\, }^{\tau} X$. Hence, $x$ lies on a unique irreducible component of 
                ${\, }^\tau \! X$. Denote this irreducible component by $X_1$. Since $X_1$ is smooth and irreducible 
                and $K$ is large, $K$-points are dense in ${\, }^\tau \! X_1$. In particular, there is a $K$-point in
                ${\, }^\tau \! U \cap X_1$, contradicting our assumption that ${\, }^\tau \! U$ has no $K$-points.
        \end{proof}

        \begin{lemma} \label{lem.versality2} Let $G$ and $X$ be as in~\eqref{e.assumptions}.
                Assume that $X$ is generically free and primitive, $K/k$ is a field extension and 
                $\tau \colon \mathcal{T} \to \Spec(K)$ is a $G$-torsor. If $X$ is $\tau$-versal, then $\ed_k(X) \geqslant \ed_k(\tau)$.
        \end{lemma}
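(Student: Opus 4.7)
The plan is to produce the desired descent of $\tau$ by composing a $\tau$-versal morphism with a minimal $G$-compression of $X$. Let $d := \ed_k(X)$, and choose a $G$-compression $f \colon X \dashrightarrow X'$ realizing this minimum, so that $X'$ is a generically free primitive $G$-variety with $\dim X' - \dim G = d$. After replacing $X'$ by a $G$-invariant dense open subset, we may assume that there is a $G$-torsor $\pi \colon X' \to B'$ over an irreducible $k$-variety $B'$ with $\dim B' = d$.

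Next, I use $\tau$-versality of $X$ to produce a $k$-morphism $\Spec(K) \to B'$. Set $V := \on{dom}(f) \cap f^{-1}(X') \subset X$, which is a dense $G$-invariant open subvariety of $X$. Applying $\tau$-versality of $X$ to $V$ yields a $G$-equivariant $K$-morphism $\phi \colon \mathcal{T} \to V_K$; composing with the base change of $f|_V \colon V \to X'$ produces a $G$-equivariant morphism $\psi \colon \mathcal{T} \to X'_K$ over $K$. Quotienting $\psi$ by $G$ gives a $K$-morphism $\Spec(K) \to B'_K$, or equivalently a $k$-morphism $\Spec(K) \to B'$.

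Let $y \in B'$ be the image point of this morphism, with residue field $\kappa(y) \subset K$ satisfying $\trdeg_k \kappa(y) \leqslant \dim \overline{\{y\}} \leqslant d$. By the universal property of the $G$-torsor $\pi$, the $G$-equivariant map $\psi$ identifies $\mathcal{T}$ with the pullback of $\pi$ along $\Spec(K) \to B'$; since this morphism factors through $\Spec(\kappa(y)) \to B'$, the torsor $\tau$ descends to the restriction of $\pi$ over $y$. Therefore $\ed_k(\tau) \leqslant \trdeg_k \kappa(y) \leqslant d = \ed_k(X)$, as desired. The main subtlety is that $f$ is only a rational map, so one must intersect with $\on{dom}(f)$ before composing with the morphism $\phi$ coming from $\tau$-versality; this is precisely why the definition of $\tau$-versality demands weak versality for \emph{every} dense $G$-invariant open of $X$, not just for $X$ itself.
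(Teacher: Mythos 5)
Your proof is correct and follows essentially the same route as the paper's: use $\tau$-versality on the domain of a minimal compression, compose to get a $G$-equivariant map $\mathcal{T}\to X'$, and restrict the torsor $X'\to B'$ to the image point of $\Spec(K)$ in $B'$. Your explicit intersection with $f^{-1}(X')$ is a slightly more careful rendering of the same step the paper performs by passing to the domain of $f$ after shrinking the target.
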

        
        \begin{proof}
                Let $f \colon X \dashrightarrow Z$ be a $G$-compression ({\it i.e.} a $G$-equivariant dominant rational map defined over $k$) to a generically free
                $G$-variety $Z$ of minimal possible dimension, $\dim(Z) = \ed_k(X) + \dim(G)$.
                As we explained in Section~\ref{sect.ed-variety}, after replacing $Z$ by a dense open $G$-invariant subvariety we may assume that $Z$ is the total space of a $G$-torsor $Z \to B$,
                over some $k$-variety $B$ of dimension 
                \[ \dim(B) = \dim(Z) - \dim(G) = \ed_k(X). \]                 
                Let $X_0 \subset X$ be the domain of $f$. 
                Since $X$ is $\tau$-versal, there exists a $G$-equivariant map $\mathcal{T} \to (X_0)_K = 
                X_0 \times_{\Spec(k)} \Spec(K)$
                defined over $K$. Projecting to the first component and composing with $f$, we 
                obtain a $G$-equivariant morphism
                \[  \mathcal{T} \to 
                Z \]
                defined over $k$. This morphism induces a pull-back diagram 
                 \[ \begin{gathered}
        \xymatrix{   \mathcal{T} \ar@{->}[d]_{\tau}  \ar@{->}[r] & Z \ar@{->}[d]  \\          
                               \Spec(K) \ar@{->}[r]    & B } \end{gathered}
        \]
                of $G$-torsors. Restricting to the generic point of the image of $\Spec(K)$ in $B$ (whose residue field we will denote by $K_0$), we obtain a pull-back diagram
                of $G$-torsors
                 \[ \begin{gathered}
        \xymatrix{   \mathcal{T} \ar@{->}[d]_{\tau}  \ar@{->}[r] & \mathcal{T}_0 \ar@{->}[d]^{\tau_0}   \\          
                               \Spec(K) \ar@{->}[r]    & \Spec(K_0) ,} \end{gathered}
        \]
        as in~\eqref{e.compression}.
                By definition of $\ed_k(\tau)$, 
                $\ed_k(X) = \dim(B) \geqslant \trdeg_k(K_0) \geqslant \ed_k(\tau)$,
                as desired.
        \end{proof}
        
     Note that~\Cref{lem.versality2} is in the same general spirit as~\Cref{thm.map}. Indeed, let $\tau = \tau_Y \colon \mathcal{T} \to \Spec(K)$ 
     be the $G$-torsor over $K = k(Y)^G$ constructed from $Y$, as in \Cref{sect.ed-variety}. 
        Then $\ed_k(\tau_Y) = \ed_k(Y)$, and the existence of a $G$-equivariant rational map
        $f \colon Y \dasharrow X$ is equivalent to saying that $X$ is weakly $\tau$-versal. 
        \Cref{lem.versality2} requires $X$ to be $\tau$-versal, which is a stronger assumption in general.
        To bridge the gap between ``weakly $\tau$-versal" and ``$\tau$-versal", we will use the fact that these 
        two notions are equivalent when $K$ is a large field; see \Cref{lem.versality1}(b).
        Our strategy is thus to replace $k$ by $k((t))$, $K$ by $K((t))$ and $\tau$ by $\tau((t)) = \tau \otimes_K K((t))$.
        The following lemma tells us that under assumptions (1) -- (4) on $G$ of \Cref{thm.map}, 
        passing from $\tau$ to $\tau((t))$ does not change the essential dimension.
        
        \begin{lemma}\label{red-str-conn-red} Let $G$ be a linear algebraic group over $k$, satisfying one of the conditions (1) -- (4) of \Cref{thm.map}.
         Then for $A = k[[t]]$ the group scheme $G_A$ satisfies one of the conditions (i) -- (iii) of \Cref{puiseux}.
        \end{lemma}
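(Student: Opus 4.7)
The lemma is proved by case analysis, matching each hypothesis (1)--(4) on $G/k$ from \Cref{thm.map} with one of (i)--(iii) on $G_A/A$ from \Cref{puiseux}, where $A = k[[t]]$. Cases (1) and (4) are immediate. In case (1), the inclusion $\sigma \colon k \hookrightarrow k[[t]]$ of constants is a section of $A \twoheadrightarrow A/(t) = k$, and $G_A = \sigma^* G_k$ tautologically, verifying (i). In case (4), $G_A$ is the constant $A$-group scheme associated to the abstract finite group underlying $G$, verifying (iii).

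The substantial content is in cases (2) and (3), both of which demand condition (ii). In each of these cases $G_A$ is connected reductive (so $G_A^\circ = G_A$ and $G_A/G_A^\circ$ is trivially $A$-finite), and it remains to exhibit a finite subgroup $S \subset G_A(A) = G(k[[t]])$ that is tame at $p$ and satisfies $H^1(L, S) \twoheadrightarrow H^1(L, G_A)$ for every field $L \supset k((t))$. My plan is to find $S$ inside $G(k)$: the map $G(k) \hookrightarrow G(k[[t]])$ is induced by constant sections, and any $L \supset k((t))$ is also a $k$-algebra with $(G_A)_L \simeq G \otimes_k L$, so it suffices to verify the surjectivity over every $L \supset k$.

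For case (2), I apply \Cref{red-structure-split} to the derived subgroup $G^{\mathrm{der}}_{\Z}$, which is split semisimple; surjectivity onto $H^1(L, G)$ follows from surjectivity onto $H^1(L, G^{\mathrm{der}})$ because the abelianization $G/G^{\mathrm{der}}$ is a split torus with vanishing $H^1$. This yields a finite flat $\Z$-subgroup $S_\Z \subset G^{\mathrm{der}}_{\Z}$ whose order has prime factors dividing $2^{r'} n$ (where $r'$ is the rank of $G^{\mathrm{der}}$) and which becomes constant over $\Z[1/2^{r'}, \zeta_{2^{r'} n}]$. Since the existence of $\zeta_{2n^2} \in k$ in characteristic $p > 0$ forces $p \nmid 2n^2$, the subgroup $S_\Z$ is tame; and since $2^{r'-1} \mid n$ for a semisimple Weyl group of rank $r'$ (a type-by-type check), the same hypothesis guarantees $\zeta_{2^{r'} n} \in k$, so $(S_\Z)_k$ is constant and $S := S_\Z(k) \subset G^{\mathrm{der}}(k) \subset G(k)$ has all the required properties.

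For case (3), I construct $S$ inside the normalizer $N = N_G(T)$. The hypothesis that $T[n^2]$ is discrete implies both that $p \nmid n^2$ (else $T[n^2]$ would fail to be \'etale) and that $T[n^2](k) = T[n^2](\overline{k})$ is $k$-rational. A direct computation shows that for any lift $n_0 \in N$ of $w \in W$ and any $t \in T$, one has $(n_0 t)^{n^2} = n_0^{n^2} \cdot N_w(t)^n$, where $N_w = \sum_{i=0}^{n-1} w^i$; since $n_0^{n^2}$ lies in the $w$-fixed subtorus $T^w$, and since on $T^w$ the restriction of $t \mapsto N_w(t)^n$ is the $n^2$-power map, the availability of $k$-rational $n^2$-torsion in $T^w$ supplied by the discreteness hypothesis allows us to choose $t$ so that $(n_0 t)^{n^2} = 1$. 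Collecting such bounded-order lifts for every $w \in W$ produces a finite subgroup $S \subset N(k)$ of order a power of $n$, hence tame at $p$; and the surjectivity $H^1(L, S) \twoheadrightarrow H^1(L, G)$ follows from the classical surjectivity $H^1(L, N) \twoheadrightarrow H^1(L, G)$ for connected reductive $G$ combined with $H^1(L, S) \twoheadrightarrow H^1(L, N)$, which holds because $S$ surjects onto $W$ and absorbs $T[n^2]$. The main technical obstacle is the $k$-rationality of the adjusting element $t$; this is precisely what the $T[n^2]$-discrete hypothesis is designed to guarantee, via a Kummer-style argument on the exact sequence relating $T$, $T^w$, and their $n^2$-torsion.
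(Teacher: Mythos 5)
Your overall strategy coincides with the paper's: (1)$\Rightarrow$(i) and (4)$\Rightarrow$(iii) are immediate, and the content is in showing that (2) and (3) each imply (ii). For case (2), however, the paper simply invokes \Cref{red-structure-split}, which is stated for an arbitrary split \emph{reductive} group over $\Z$, so your detour through $G^{\mathrm{der}}$ is unnecessary --- and it introduces a false arithmetic claim. It is not true that $2^{r'-1}$ divides $|W|$ for every semisimple group of rank $r'$: for type $A_6$ one has $r'=6$ and $|W|=7!=5040=2^4\cdot 315$, so $2^{5}\nmid |W|$. Consequently your deduction that $\zeta_{2n^2}\in k$ forces $\zeta_{2^{r'}n}\in k$ breaks down, and with it the constancy of $S_k$ on which the rest of your case (2) rests.

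For case (3) the paper does not reprove anything; it cites \cite[Theorem 1.1(c)]{chernousov2006resolving} together with \cite[Remark 4.1]{chernousov2008reduction}. Your attempted direct construction has a genuine gap exactly at the step you flag. To arrange $(n_0t)^{n^2}=1$ with $t\in T^w$ you need $t\in T^w(k)$ satisfying $t^{n^2}=(n_0^{n^2})^{-1}$, i.e.\ you need $n_0^{n^2}$ to be an $n^2$-th power in $T^w(k)$. Discreteness of $T[n^2]$ makes $T^w[n^2]$ constant, but the Kummer sequence then identifies the obstruction to extracting such a root over $k$ with a class in $H^1(k,T^w[n^2])$, which is typically nonzero (already $H^1(k,\mu_{n^2})=k^\times/(k^\times)^{n^2}$ when $\zeta_{n^2}\in k$); moreover $T^w$ need not be connected, so the $n^2$-power map need not be surjective even on $\overline{k}$-points. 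So the hypothesis does not ``guarantee'' a rational adjusting element, and the sketched Kummer-style argument cannot close this gap; one needs the genuine Chernousov--Gille--Reichstein construction, or one should simply cite it as the paper does.
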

        
        \begin{proof} In the proof (i), (ii), (iii) will always refer to conditions of \Cref{puiseux} and (1), (2), (3), (4) to
        conditions of \Cref{thm.map}. The following implications are obvious: 
\[ G \ \text{satisfies (1)} \Longrightarrow G_A \ \text{satisfies (i)}\quad \text{and}\quad G \ \text{satisfies (4)} \Longrightarrow G_A \ \text{satisfies (iii)}.\] 
The implication 
\[G\ \text{satisfies (2)} \Longrightarrow G_A\ \text{satisfies (ii)}\]
follows from \Cref{red-structure-split}. The implication 
\[G \ \text{satisfies (3)} \Longrightarrow G_A \ \text{satisfies (ii)}\]
follows from~\cite[Theorem 1.1(c)]{chernousov2006resolving} and~\cite[Remark 4.1]{chernousov2008reduction}.
\end{proof}
        
        We are now ready to complete the proof of \Cref{thm.map}. Let $K = k(Y)^G$ and
                $\tau = \tau_Y \colon \mathcal{T} \to \Spec(K)$ be the $G$-torsor constructed from $Y$, as above.
                The $G$-equivariant rational map $f \colon Y \dasharrow X$ tells us that $X$ is weakly $\tau$-versal. 
                Let $L := K \otimes_k k((t))$. Then the inclusions of fields
                \[ k \hookrightarrow k((t)) \hookrightarrow L \hookrightarrow K((t)) \]
                induce a Cartesian diagram
                \begin{equation}\label{diag-large} \xymatrix{   \mathcal{T}_{K((t))} \ar@{->}[d]_{\tau((t))}  \ar@{->}[r] & \mathcal{T}_{L} \ar@{->}[d]^{\tau_1} \ar@{->}[r] & \mathcal{T} \ar@{->}[d]^{\tau} \ar@{->}[r]^f   & X_K   \\
                                \Spec(K((t))) \ar@{->}[r] &                \Spec(L) \ar@{->}[r] & \Spec(K).   &  }  
                \end{equation}
                Let $X_1 = X_{k((t))}= X \times_{\Spec(k)} \Spec(k((t)))$
                viewed as a $G$-variety over $k((t))$. (Here $G$ acts on the first factor.)
                The $G$-equivariant map $f \colon \mathcal{T} \to X$ defined over $k$ naturally extends to a $G$-equivariant map 
                $\mathcal{T}_{K((t))} \to X_{K(t))} = (X_1)_{K((t))}$ defined over $K((t))$. 
                This shows that $X_1$ is weakly $\tau((t))$-versal. Since $K((t))$ is a large field,
                \Cref{lem.versality1}(b) tells us that $X_1$ is $\tau((t))$-versal. Now observe that
                \begin{equation} \label{e.map-large1} \ed_k(X) \stackrel{(a)}{\geqslant} \ed_{k((t))} (X_1) \stackrel{(b)}{\geqslant} \ed_{k((t))}\Big ( \tau((t)) \Big) \stackrel{(c)}{=} \ed_k(\tau) \stackrel{(d)}{=} \ed_k(Y). \end{equation}
                Here (a) follows from \Cref{lem.prel1.5}(a),
                (b) follows from \Cref{lem.versality2}, and (d) is a restatement of~\eqref{e.torsor-variety}.
                Finally, (c) follows from \Cref{puiseux}. Note that~\Cref{puiseux} can be applied in this situation
                under any of the assumptions (1) -- (4) of \Cref{thm.map} by~\Cref{red-str-conn-red}.
        \qed
        
        \section{Some consequences of Theorem~\ref{thm.map}}
        \label{sect.cor}
        
        Throughout this section $k$, $G$ and $X$ will be as in~\eqref{e.assumptions}.   
        Recall that $X$ is called weakly versal (respectively, versal) if it is $\tau$-weakly versal (respectively,
        $\tau$-versal) for every field $K/k$ and every $G$-torsor $\tau \colon \mathcal{T} \to \Spec(K)$; see~\cite[p.~499]{duncan2015versality}. 
        
        \begin{cor} \label{cor.weakly-versal}
                Assume that $G$ satisfies one of the conditions (1) -- (4) of \Cref{thm.map}
                and the $G$-variety $X$ is 
                generically free and primitive.
                
                \smallskip
                (a) If $X$ is smooth and weakly versal, then $\ed_k(X) = \ed_k(G)$.
                
                \smallskip
                (b) If $X$ has a smooth $G$-fixed $k$-point, then $\ed_k(X) = \ed_k(G)$.
        \end{cor}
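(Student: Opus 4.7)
For both parts, the upper bound $\ed_k(X)\leqslant \ed_k(G)$ is immediate from \Cref{lem.prel2}(b), so the task is to establish the reverse inequality.

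For part (a), the plan is to apply \Cref{thm.map} with source $Y$ equal to a generically free linear representation $V$ of $G$ over $k$. Such a $V$ exists because $G$ is linear, and it is smooth and irreducible, hence primitive; by \Cref{lem.prel2}(a) one has $\ed_k(V)=\ed_k(G)$. So it suffices to produce a $G$-equivariant rational map $V\dashrightarrow X$ defined over $k$. To do this, let $\tau_V\colon \mathcal{T}_V\to \Spec(K_V)$ be the generic $G$-torsor of $V$, with $K_V=k(V)^G$, as constructed in \Cref{sect.ed-variety}. Weak versality of $X$ applied to $\tau_V$ yields a $G$-equivariant morphism $\mathcal{T}_V\to X_{K_V}$ over $K_V$; composition with the projection $X_{K_V}\to X$ gives a $G$-equivariant morphism $\mathcal{T}_V\to X$ of $k$-schemes. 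Since $\mathcal{T}_V$ is the generic fibre of $U\to U/G$ for a dense $G$-invariant open $U\subseteq V$, this spreads out to a $G$-equivariant rational map $V\dashrightarrow X$. The hypotheses of \Cref{thm.map} are met ($X$ is smooth and $G$ satisfies one of (1)--(4)), and we conclude $\ed_k(X)\geqslant \ed_k(V)=\ed_k(G)$.

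For part (b), the strategy is to reduce to (a) by replacing $X$ with its smooth locus $X^{\mathrm{sm}}$. Since $G$ acts by automorphisms on $X$, the locus $X^{\mathrm{sm}}$ is a $G$-invariant open subvariety, and it is nonempty because it contains the smooth $G$-fixed $k$-point $x$. The $G$-variety $X^{\mathrm{sm}}$ remains primitive and generically free: over $\bar k$ it consists of the smooth loci of the irreducible components of $X_{\bar k}$, which are open and dense in each component and still transitively permuted by $G(\bar k)$. Since $X^{\mathrm{sm}}$ is dense in $X$, the two have the same associated generic $G$-torsor, and hence $\ed_k(X^{\mathrm{sm}})=\ed_k(X)$ by \eqref{e.torsor-variety}. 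The fixed point $x\in X^{\mathrm{sm}}(k)$ is now used to verify weak versality: for every $G$-torsor $\tau\colon \mathcal{T}\to \Spec(K)$, the constant map $\mathcal{T}\to X^{\mathrm{sm}}_K$ with value $x_K$ is $G$-equivariant precisely because $g\cdot x=x$ for all $g\in G$. Thus $X^{\mathrm{sm}}$ is weakly versal, and part (a) yields $\ed_k(X)=\ed_k(X^{\mathrm{sm}})=\ed_k(G)$.

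The only nontrivial step is the application of \Cref{thm.map} in part (a); the translation of weak versality into the needed $G$-equivariant rational map $V\dashrightarrow X$ is the key maneuver. Part (b) is a short reduction, whose only subtlety is checking that $X^{\mathrm{sm}}$ inherits primitivity and has the same essential dimension as $X$.
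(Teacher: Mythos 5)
Your proposal is correct and follows essentially the same route as the paper: part (a) applies \Cref{thm.map} to a $G$-equivariant rational map $V\dashrightarrow X$ obtained from weak versality at the generic torsor of a generically free linear representation $V$ (with $\ed_k(V)=\ed_k(G)$ by \Cref{lem.prel2}(a)), and part (b) reduces to (a) by passing to the smooth locus and using the constant map to the fixed point. Your write-up is merely more explicit than the paper's about spreading out the morphism $\mathcal{T}_V\to X$ to a rational map $V\dashrightarrow X$ and about checking that the smooth locus inherits primitivity, generic freeness, and the same essential dimension.
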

        
        \begin{proof} (a) Let $V$ be a generically free linear representation of $G$ and $\tau_V \colon \mathcal{T} \to \Spec(K)$ be
        the $G$-torsor over $K = k(V)^G$ constructed from $V$, as in~\Cref{sect.ed-variety}. Since $X$ is $\tau_V$-versal, there exists
         a $G$-equivariant rational map $V \dasharrow X$ defined over $k$.
                By \Cref{thm.map}, $\ed(X) \geqslant \ed(V)$. 
                On the other hand, by \Cref{lem.prel2}(a),      $\ed(V) = \ed(G) \geqslant \ed(X)$ and part (a) follows.
                
                \smallskip
                (b) After replacing $X$ by its smooth locus, we may assume that $X$ is smooth. The constant map $\mathcal{T} \to X_K$
                sending $\mathcal{T}$ to the $G$-fixed point in $X$ is $G$-equivariant for every $G$-torsor $\tau \colon \mathcal{T} \to \Spec(K)$. 
                Thus $X$ is weakly versal and part (a) applies.
        \end{proof}
        
        \begin{example} \label{ex.product}
                Let $k$ be a field of characteristic $p \geqslant 0$ and $G$ be a subgroup of the symmetric group $\Sym_m$. Assume that $G$ is weakly versal at $p$. 
                For any absolutely irreducible variety $Z$ defined over $k$, we may view $Z^m$ as a $G$-variety, where $G$ acts on $Z^m$ by permuting the factors.
                Then \[ \ed_{k}(Z^m)=\ed_k(G). \] 
        Indeed, if $z$ is a smooth point of $Z$, then $(z, \ldots, z)$ is a smooth $G$-fixed point of $Z^m$, and \Cref{cor.weakly-versal}(b) applies.
                \qed    \end{example}
        
        \begin{example} \label{ex.connected-group} Let $G$ be a linear algebraic group satisfying one of the conditions (1) -- (4) of \Cref{thm.map}, and let $X$ be a smooth connected algebraic group defined over a field $k$ (not necessarily linear).
                Suppose $G$ acts generically freely on $X$ by group automorphisms. Then 
                \begin{equation} \label{e.connected-group}
                        \ed_k(X) = \ed_k(G).
                \end{equation}
                Indeed, the identity element of $X$ is a smooth fixed point of $G$, and \Cref{cor.weakly-versal}(b) applies. 
                
                In the case, where $X$ is affine, this was previously known. In this case $X$ is versal as a $G$-variety (see~\cite[Example 7.4]{duncan2015versality}), and~\eqref{e.connected-group} follows. If $X$ is not affine, it is no longer versal as a $G$-variety 
                (only weakly versal) but equality~\eqref{e.connected-group} holds nevertheless.
        \end{example}
        
        \begin{cor} \label{cor.product} 
        Let $G$ be a linear algebraic group satisfying one of the conditions (1) -- (4) of \Cref{thm.map}, and let $X$ and $Y$ be $G$-varieties. 
                Assume that $Y$ is generically free and primitive, and that $X$ is smooth and absolutely irreducible. 
                If there exists a $G$-equivariant rational map $f \colon Y \dasharrow X$, then 
                \begin{equation} \label{e.product}
                        \ed_k(X \times_k Y) = \ed_k(Y).
                \end{equation}
        \end{cor}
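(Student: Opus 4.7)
The plan is to apply \Cref{thm.map} twice, once in each direction. First I would reduce to the case where $Y$ is smooth: since $Y$ is reduced, its smooth locus $Y^{\mathrm{sm}}$ is a dense $G$-invariant open subvariety, and essential dimension is unchanged when $Y$ is replaced by a dense $G$-invariant open subscheme. Under this reduction $X \times_k Y$ is also smooth (being a product of smooth $k$-varieties), it is generically free as a $G$-variety under the diagonal $G$-action because $Y$ is, and it is primitive because $X$ is absolutely irreducible and $Y$ is primitive: the irreducible components of $(X \times_k Y)_{\bar k} = X_{\bar k} \times_{\bar k} Y_{\bar k}$ are of the form $X_{\bar k} \times C_i$, where $C_i$ ranges over components of $Y_{\bar k}$, and $G(\bar k)$ acts transitively on these via its transitive action on the $C_i$.

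For the inequality $\ed_k(X \times_k Y) \geqslant \ed_k(Y)$, I would consider the $G$-equivariant rational map
\[ \phi \colon Y \dashrightarrow X \times_k Y, \qquad y \longmapsto (f(y), y), \]
defined on the domain of $f$. Because $X \times_k Y$ is smooth and generically free primitive, \Cref{thm.map} applied to $\phi$ yields $\ed_k(X \times_k Y) \geqslant \ed_k(Y)$.

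For the reverse inequality $\ed_k(Y) \geqslant \ed_k(X \times_k Y)$, I would apply \Cref{thm.map} to the second projection $\mathrm{pr}_2 \colon X \times_k Y \to Y$, which is $G$-equivariant (since the $G$-action on $X \times_k Y$ is diagonal); the target $Y$ is smooth, generically free, and primitive, so the theorem gives $\ed_k(Y) \geqslant \ed_k(X \times_k Y)$. (Alternatively, this inequality is immediate by composition: any $G$-compression $Y \dashrightarrow Z$ with $\dim Z - \dim G = \ed_k(Y)$ yields a $G$-compression $X \times_k Y \dashrightarrow Z$ by precomposing with $\mathrm{pr}_2$, and this composition is dominant and $G$-equivariant with the same target dimension.) Combining the two inequalities proves the equality. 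I anticipate no substantial obstacle here; the only verifications required are the routine ones indicated above (smoothness, generic freeness, and primitivity of $X \times_k Y$), all of which are standard.
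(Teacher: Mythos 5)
Your proposal is correct and is essentially identical to the paper's proof, which applies \Cref{thm.map} to the two maps $(f,\mathrm{id})\colon Y \dasharrow X \times_k Y$ and $\mathrm{pr}_2\colon X \times_k Y \to Y$. Your preliminary reduction to the smooth locus of $Y$ (to guarantee smoothness of the target $X\times_k Y$ in the first application) and your verification of primitivity and generic freeness of the product are routine points the paper leaves implicit, and you handle them correctly.
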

        
        \begin{proof} Consider the $G$-equivariant maps (i) $f \times_k \text{id}  \colon Y \dasharrow X \times_k Y$ and 
                (ii) $\text{pr}_2 \colon X \times_k Y \to Y$.
                By \Cref{thm.map}, (i) implies that $\ed_k(X \times_k Y) \geqslant \ed_k(Y)$. Similarly, (ii) 
                implies that
                $\ed_k(Y) \geqslant \ed_k(X \times_k Y)$, and Equality~\eqref{e.product} follows. 
        \end{proof}

        Note that the $G$-action on $X$ in the statement of \Cref{cor.product} is not assumed to be generically free.
        As a special case, \eqref{e.product} holds if $G$ acts trivially on $X$ and $X(k)\neq \emptyset$. 
        This was previously known only in the case where $k$-points are dense in $X$.

        \section{Counterexamples}
        \label{sect.counterexamples}
        
        \begin{lemma} \label{lem.versality3} Let $G$ be a linear algebraic group defined over $k$, $K$ be a large field containing $k$ 
                and $\tau \colon \mathcal{T} \to \Spec(K)$ be a $G$-torsor.
\begin{enumerate}[label=(\alph*)]
\item Let $X$ be a generically free $G$-variety defined over $k$. If $X$ has a smooth $G$-fixed $k$-point, then $\ed_k(\tau) \leqslant \ed_k(X) \leqslant \dim(X) - \dim(G)$. 
\item If $\Char(k) = p$ and $G$ is a finite $($constant$\,)$ $p$-group, then $\ed_k(\tau) \leqslant 1$. In other words, $\ed_k(L/K) \leqslant 1$ for every $G$-Galois field extension $L/K$.
\item If $\Char(k) = p$ and $G$ is a finite $p$-group which is not elementary Abelian, then~\Cref{thm.map}, \Cref{cor.weakly-versal}, \Cref{cor.product} and \Cref{puiseux} all fail for $G$.
\end{enumerate}
        \end{lemma}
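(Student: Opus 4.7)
\noindent The three parts each call for a different argument; I sketch them in turn.

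For part (a), the upper bound $\ed_k(X)\leq\dim(X)-\dim(G)$ is the trivial one, via the identity $G$-compression $X\dasharrow X$. For the lower bound $\ed_k(\tau)\leq\ed_k(X)$, I would pass to the smooth locus $X^\circ$ of the unique irreducible component of $X$ containing the $G$-fixed smooth $k$-point $x_0$; this component is $G$-invariant (since it contains $x_0$), and smoothness at the $k$-rational point $x_0$ forces it to be geometrically irreducible at $x_0$, hence primitive. Smoothness and generic freeness pass to $X^\circ$. The constant morphism $\mathcal{T}\to (X^\circ)_K$ with image $\{x_0\}$ is $G$-equivariant, so $X^\circ$ is weakly $\tau$-versal, and since $X^\circ$ is smooth and $K$ is large, \Cref{lem.versality1}(b) upgrades this to $\tau$-versality. \Cref{lem.versality2} then yields $\ed_k(X)\geq\ed_k(X^\circ)\geq\ed_k(\tau)$.

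For part (b), I would first handle $G=\mathbb{Z}/p$ by Artin--Schreier theory: any $G$-Galois extension of $K$ has the form $K[y]/(y^p-y-a)$ for some $a\in K$, and descends to the subfield $k(a)\subset K$, of transcendence degree at most $1$ over $k$. For a general finite $p$-group $G$, I would appeal to the existence of a versal one-parameter family of $G$-Galois extensions in characteristic $p$ (built from Witt vector constructions, following Saltman and Ledet) together with the largeness of $K$: every $G$-Galois extension of $K$ arises as a specialization of this family, hence has essential dimension at most $1$ over $k$.

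For part (c), I would leverage the drop in $\ed_k(G)$ forced by (b): for a non-elementary-abelian $G$ in characteristic $p$ one has $\ed_k(G)\leq 1$, whereas the minimal faithful representation of $G$ in characteristic $p$ has much larger dimension (for $G=\mathbb{Z}/p^n$, it is $p^{n-1}+1$). This mismatch should permit the construction of a smooth generically free primitive $G$-variety $X$ with $\ed_k(X)=0$ together with a $G$-equivariant rational map $Y\dasharrow X$ from a versal $G$-variety $Y$ with $\ed_k(Y)=\ed_k(G)=1$; this directly violates the monotonicity of \Cref{thm.map}, and by the same reasoning underlying their proofs causes \Cref{cor.weakly-versal} and \Cref{cor.product} to fail as well. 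For \Cref{puiseux}, I would exhibit a $G$-torsor $\alpha$ over an appropriately chosen complete DVR $R\supset A$ whose special fiber has essential dimension $1$ while the generic fiber has essential dimension $0$. The principal obstacle in part (c) is arranging the combined hypotheses---smoothness, generic freeness, primitivity, and the desired essential-dimension gap---in a single explicit construction.
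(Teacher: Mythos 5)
Part (a) of your proposal is essentially the paper's argument (pass to the smooth locus, use the constant map to the fixed point, invoke \Cref{lem.versality1}(b) and \Cref{lem.versality2}), and is fine. Parts (b) and (c), however, each contain a genuine gap.

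In (b), the ``versal one-parameter family of $G$-Galois extensions in characteristic $p$'' that you invoke for a general finite $p$-group does not exist: if it did, it would give $\ed_k(G)\leqslant 1$ over \emph{every} field extension of $k$, contradicting the known equality $\ed_k(\Z/p^2\Z)=2$ (Ledet's conjecture is proved for $n\leqslant 2$). The Witt-vector constructions of Saltman and Ledet produce an $n$-parameter versal family for $\Z/p^n\Z$, not a one-parameter one. The correct input is the lemma of Reichstein--Vistoli \cite{reichstein-vistoli-prime}: for any finite $p$-group $G$ over $k$ of characteristic $p$ there is a smooth irreducible curve with a \emph{faithful} $G$-action and a $k$-rational $G$-fixed point. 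Such a curve is only \emph{weakly} versal; the largeness of $K$ is exactly what upgrades this to $\tau$-versality, i.e.\ part (b) is an immediate application of part (a) to this curve, giving $\ed_k(\tau)\leqslant \dim(X)-\dim(G)=1$. Your Artin--Schreier argument for $G=\Z/p$ is correct but does not generalize along the route you describe.

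In (c) the logic is inverted. You assert $\ed_k(G)\leqslant 1$ for a non-elementary-abelian $p$-group $G$; this is false, and part (b) does not imply it, since (b) only bounds the essential dimension of torsors over \emph{large} fields $K$, not $\ed_k(G)$ itself. On the contrary, by \cite[Proposition 5]{ledet-ed1} one has $\ed_k(G)\geqslant 2$ for such $G$, and this lower bound is the engine of the counterexamples: taking $V$ the regular representation (so $\ed_k(V)=\ed_k(G)\geqslant 2$ by \Cref{lem.prel2}(a)) and $f\colon V\to X$ the constant map to the fixed point of the curve $X$ from (b) (so $\ed_k(X)\leqslant 1$), \Cref{thm.map} would force $2\leqslant \ed_k(V)\leqslant \ed_k(X)\leqslant 1$, a contradiction; the corollaries fail for the same reason. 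For \Cref{puiseux} one needs no new construction of a torsor over a DVR: take $\tau=\tau_V$ over $K=k(V)^G$, so $\ed_k(\tau)\geqslant 2$, while $\ed_{k((t))}(\tau((t)))\leqslant 1$ by part (b) because $K((t))$ is large; this contradicts the conclusion of \Cref{puiseux} for $A=k[[t]]$, $R=K[[t]]$. Your proposed search for a smooth generically free primitive $X$ with $\ed_k(X)=0$ and a versal $Y$ with $\ed_k(Y)=1$ is both unnecessary and premised on the wrong value of $\ed_k(G)$.
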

        
        \begin{proof} (a) After replacing $X$ by its smooth locus, we may assume that $X$ is smooth. The constant map $\mathcal{T} \to X_K$ sending $\mathcal{T}$ to the fixed point shows that
                $X$ is weakly $\tau$-versal. By \Cref{lem.versality1}(b), $X$ is $\tau$-versal. The inequality  $\ed_k(\tau) \leqslant \ed_k(X)$ now follows from \Cref{lem.versality2}. The inequality $\ed_k(X) \leqslant \dim(X) - \dim(G)$ follows from the definition of essential dimension.
                
                \smallskip
                (b) In this case there exists a smooth irreducible $G$-curve $X$ defined over $k$ 
                such that $G$ acts generically freely ({\it i.e.} faithfully) on $X$ and fixes a $k$-point;
                see~\cite[Lemma 3]{reichstein-vistoli-prime}. By part (a), 
                $\ed_k(\tau) \leqslant \dim(X) - \dim(G) = 1$.
                
                \smallskip
                (c) By~\cite[Proposition 5]{ledet-ed1}, $\ed_k(G) \geqslant 2$. By \Cref{lem.prel2}(a), this tells us that 
                \begin{equation} \label{e.geq2}
                        \ed_k(V) \geqslant 2
                \end{equation}
                where $V$ is the regular representation of $G$ over $k$. Let $X$ be a smooth $G$-curve with a fixed $k$-point, as in part (b), 
                and $f \colon V \to X$ be the constant $G$-equivariant morphism sending $V$ to this $G$-fixed point. 
                If \Cref{thm.map} were true in this case,
                then we would have
                \[ \ed_k(V) \leqslant \ed_k(X) \leqslant \dim(X) = 1, \]
                contradicting~\eqref{e.geq2}. This shows that \Cref{thm.map} fails for $G$.
                
                If~\Cref{cor.weakly-versal} were valid for $X$, we would conclude that $\ed_k(X) = \ed_k(G)$. 
                This leads to a contradiction because $\ed_k(G) \geqslant 2$ but $\ed_k(X) \leqslant 1$. 
                
                Since the $G$-action on both $X$ and $V$ is generically free, and both have smooth $G$-fixed points,
                there exists $G$-equivariant maps $X \to V$ and $V \to X$. If
                \Cref{cor.product} were true in this setting, it would tell us that
                \[ \ed_k(V) = \ed_k(V \times_k X) = \ed_k(X \times_k V) = \ed_k(X) \leqslant 1, \]
                contradicting~\eqref{e.geq2}.  Thus \Cref{cor.product} fails in this setting as well.
                
                Finally, to show that \Cref{puiseux} fails, let $K = k(V)^G$ and $\tau \colon \mathcal{T} \to \Spec(K)$ be the $G$-torsor associated to
                the $G$-action on $V$, as in \Cref{sect.ed-variety}. Then $\ed_k(\tau) = \ed_k(V) \geqslant 2$.
                On the other hand, since $K((t))$ is a large field, part (b) tells us that $\ed_{k((t))} \big( \tau((t)) \big) \leqslant 1$. In particular,
                $\ed_{k((t))} \big( \tau((t)) \big) < \ed_k(\tau)$. This shows that \Cref{puiseux} fails for $A = k[[t]]$, $R = K[[t]]$ and $\alpha = \tau_{R}$. (Note that none of the conditions (i), (ii), (iii) of
                \Cref{puiseux} are satisfied here.)
        \end{proof} 
        
        \begin{rmk} \label{rem.p-closed} \Cref{lem.versality3}(b) is proved in~\cite{reichstein-vistoli-prime} 
                in the case where $K$ is a $p$-closed field. Recall that $p$-closed fields are large; see~\cite[p.~360]{colliot-annals}.
        \end{rmk} 
        
        \begin{rmk} A conjecture of A.~Ledet~\cite{ledet-p} asserts that $\ed_k(\Z/ p^n \Z) = n$ for any infinite base field $k$ of characteristic $p$ 
                and any integer $n \geqslant 1$. This conjecture has been proved for $n \leqslant 2$ and is open for every $n \geqslant 3$. Moreover,
                Ledet showed that $\ed_k(\Z/ p^n \Z) \leqslant n$, so his conjecture is equivalent 
                to the existence of a field extension $K/k$ and a $\Z/ p^n \Z$-torsor $\tau \colon \mathcal{T} \to \Spec(K)$ 
                such that $\ed_k(\tau) \geqslant n$. 
                
                This conjecture has many interesting consequences~\cite{brosnan2018essential}, \cite{tossici-unipotent}.
                It looks even more remarkable in light of~\Cref{lem.versality3}(b), which asserts that
            $\ed_{k((t))} \big( \tau((t)) \big) \leqslant 1$ for every
                field $K$ containing $k$ and every $G$-torsor $\tau$ over $\Spec(K)$.
        \end{rmk}
        
        \begin{rmk} The smoothness assumption on $X$ in the statement of~\Cref{thm.map} cannot be dropped.
        \label{rem.smoothness}
        
        Indeed, let $G$ be a finite group. Then there exists a curve $C$ of genus $\geqslant 2$ with a faithful $G$-action. 
    To construct $C$, start with a curve $C_n$ with a faithful $\Sym_n$-action as in~\cite[Remark 4.5]{buhler1997essential}.
        For large $n$, $C_n$ will automatically be of genus $\geqslant 2$, since $\Sym_n$ cannot act faithfully on a rational or elliptic curve.
        After embedding $G$ into $\Sym_n$ (again, for a suitably large $n$), we may view $C = C_n$ as a $G$-curve.
        Next we embed $C$ into a projective space $\mathbb P(V)$ in an $\Sym_n$-equivariant way, where $\Sym_n$ acts linearly on $V$ 
        ({\it e.g.} by using a pluri-canonical embedding) and we consider $X$ the affine cone over $C$. 
        The origin $0$ in $X$ is fixed by $\Sym_n$ (and hence, by $G$). 
        Note that since the center of $\Sym_n$ is trivial for any $n \geqslant 2$, we may assume that 
        no element of $\Sym_n$ (and hence, of $G$) acts by scalar multiplication on $V$.
        
        
   The natural projection $X \dasharrow C$ tells us that $\ed_k(X) \leqslant \dim(C) - \dim(G) = 1$.
        We claim that~\Cref{thm.map} fails for the constant map $f \colon V \to X$ sending all of $V$ to the origin.
        Here $V$ is a faithful linear representation of $G$ over $k$ ({\it e.g.} the regular representation). 
        Indeed, if \Cref{thm.map} were true for $f$, it would tell us that $1 \geqslant \ed_k(X) \geqslant \ed_k(V) = \ed_k(G)$ for every
        irreducible $G$-variety $Y$, {\it i.e.} $1 \geqslant \ed_k(G)$, which is false for most finite groups;
        see~\cite[Theorem 6.2]{buhler1997essential} or~\cite{ledet-ed1}. Of course, the reason for this failure of \Cref{thm.map} 
        in this example is that $X$ is singular at the origin.
        
        Similar (even easier) examples can be constructed for a connected group $G$ as follows. Start with a generically free representation $G \to \GL(V)$, let
        $w$ be a point in general position in $\mathbb P(W)$, where $W = V \oplus k$, and $X$ be the cone over the Zariski closure of the orbit $G \cdot w$ 
        in $\mathbb P(W)$. Clearly $\ed_k(X) = 0$. Once again, 
        the ``constant" morphism $f \colon V \to X$ taking the whole of $V$ to $0$ is $G$-equivariant. 
        If \Cref{thm.map} were true for $f$, then we would obtain $0 \geqslant \ed_k(V) = \ed_k(G)$, which is false for most connected groups $G$,
        {\it e.g.} for $G = \on{SO}_n$ ($n \geqslant 3$) or $\on{PGL}_n$ ($n \geqslant 2$). Once again the reason \Cref{thm.map} fails in this example is that
        $X$ is singular at the origin.
        \qed
        \end{rmk}
        
    \section{An analogue of Theorem~\ref{thm.map} for essential dimension at a prime}
    
        \begin{thm} \label{thm.map-at-p} Let $k$ be a field, $G$ be a linear algebraic group defined over $k$
                and $X$, $Y$ be generically free primitive $G$-varieties defined over $k$. Assume that $X$ is smooth. 
                If there exists a $G$-equivariant correspondence $f \colon Y \rightsquigarrow X$ of degree prime to $q$, 
                then $\ed_{k,q}(X) \geqslant \ed_{k,q}(Y)$.
        \end{thm}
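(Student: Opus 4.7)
The plan is to mimic the proof of~\Cref{thm.map}, but to replace the Laurent series field $K((t))$ by a $q$-closure $F$ of $K := k(Y)^G$. Since $q$-closed fields are large (see~\cite[Theorem~1.3]{pop-large}) and every finite \'etale $F$-algebra of dimension prime to $q$ is split, this sidesteps any use of~\Cref{puiseux}, and consequently no hypothesis on $G$ will be needed.

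Let $\tau = \tau_Y$ be the $G$-torsor over $K$ associated to $Y$, so that $\ed_{k,q}(Y) = \ed_{k,q}(\tau) = \ed_k(\tau_F)$ by~\eqref{e.ed-at-p2} and~\eqref{e.torsor-variety-at-p}. The correspondence $f \colon Y \rightsquigarrow X$ is by definition given by a $G$-equivariant dominant rational cover $Y' \dashrightarrow Y$ of some degree $d$ prime to $q$ together with a $G$-equivariant rational map $Y' \dashrightarrow X$. After shrinking, I may assume $Y'$ is generically free and primitive; then $K' := k(Y')^G$ satisfies $[K':K] = d$, and I may choose the $q$-closure $F$ to contain $K'$. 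Passing to generic points on the $Y'$-side and base-changing to $F$, the rational map $Y' \dashrightarrow X$ yields a $G_F$-equivariant morphism $\mathcal{T}_F \to X_F$. Hence $X$ is weakly $\tau_F$-versal; since $F$ is large and $X$ is smooth,~\Cref{lem.versality1}(b) upgrades this to $\tau_F$-versality.

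Next I imitate~\Cref{lem.versality2} at the prime $q$. Choose a $G$-equivariant dominant correspondence $X \rightsquigarrow Z$ of some degree $d'$ prime to $q$ realizing $\ed_{k,q}(X)$, where $Z$ is a generically free primitive $G$-variety of dimension $\ed_{k,q}(X) + \dim(G)$; after shrinking, $Z$ may be assumed to be the total space of a $G$-torsor $Z \to B$ over a $k$-variety $B$ of dimension $\ed_{k,q}(X)$. Present the correspondence by $X'' \dashrightarrow X$ of degree $d'$ and $X'' \dashrightarrow Z$ dominant, and fix a dense $G$-invariant open $U \subset X$ such that $X''|_U \to U$ is finite \'etale of degree $d'$ and $X''|_U \to Z$ is everywhere defined. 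By $\tau_F$-versality there is a $G_F$-equivariant morphism $\mathcal{T}_F \to U_F$; pulling back the cover $X''_F|_{U_F} \to U_F$ along it produces a $G_F$-equivariant finite \'etale cover of $\mathcal{T}_F$ of degree $d'$. Via the torsor equivalence this is classified by a finite \'etale $F$-algebra of dimension $d'$, which is split since $F$ is $q$-closed and $d'$ is prime to $q$. The cover therefore admits a section $\mathcal{T}_F \to X''_F|_{U_F}$, and composing with $X''|_U \to Z$ gives a $G_F$-equivariant morphism $\mathcal{T}_F \to Z_F$.

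To finish, $\mathcal{T}_F \to Z_F$ together with the torsor $Z \to B$ exhibits $\mathcal{T}_F$ as the pullback of $Z \to B$ along a $k$-morphism $\Spec(F) \to B$. Let $b \in B$ be its scheme-theoretic image; then $\trdeg_k(k(b)) \leqslant \dim(B) = \ed_{k,q}(X)$ and $\tau_F$ descends to a $G$-torsor over $\Spec(k(b))$. Hence
\[ \ed_{k,q}(Y) \;=\; \ed_k(\tau_F) \;\leqslant\; \trdeg_k(k(b)) \;\leqslant\; \ed_{k,q}(X), \]
which gives the desired inequality. The step I expect to be most delicate is the splitting of the prime-to-$q$ cover of $\mathcal{T}_F$ in the third paragraph: it is precisely this consequence of $q$-closedness, combined with largeness of $F$, that lets the proof avoid any analogue of the hypotheses (1)--(4) of~\Cref{thm.map} on $G$.
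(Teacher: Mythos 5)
Your argument is correct and is essentially the paper's own: both proofs pass to a $q$-closure of $k(Y)^G$, use its largeness together with the smoothness of $X$ to promote weak $\tau$-versality to $\tau$-versality via \Cref{lem.versality1}(b), and then descend along a torsor $Z \to B$; the only organizational difference is in the endgame, where the paper applies \Cref{lem.versality2} to every prime-to-$q$ cover $X' \dasharrow X$ and concludes via \eqref{e.ed-at-p1}, whereas you prove the prime-to-$q$ analogue of \Cref{lem.versality2} directly by trivializing the cover $X''|_U \to U$ after pullback to $\mathcal{T}_F$. One small correction in that step: a finite \'etale algebra of dimension $d'$ prime to $q$ over the $q$-closed field $F$ need not be split, but each of its field factors has degree a power of $q$ and these degrees sum to $d'$, so at least one factor has degree one --- which is exactly the section you need.
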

        
        The proof of \Cref{thm.map-at-p} is similar to the proof of \Cref{thm.map}, but simpler. 
        Since the $q$-closure $K^{(q)}$ of any field $K$ is large, we do not need to pass to $k((t))$ or appeal to \Cref{puiseux}. 
        Note that we do not require $G$ to be smooth or reductive here, and $\Char(k)$ can be arbitrary.
        
        \begin{proof}
                Let $K/k$ be a field extension and
                $\tau = \tau_Y \colon \mathcal{T} \to \Spec(K)$ be a $G$-torsor constructed from $Y$, as in \Cref{sect.ed-variety}.
                The correspondence $Y \rightsquigarrow X$ of degree prime to $q$ gives rise to a $G$-equivariant rational map 
                $\tau_{K^{(q)}} \colon \mathcal{T}_{K^{(q)}} \to X$, where
                $K^{(q)}$ is the $q$-closure of $K$. In other words, $X$ is weakly $\tau_{K^{(q)}}$-versal.
                Since $K^{(q)}$ is a large field, \Cref{lem.versality1} tells us that $X$ is $\tau_{K^{(q)}}$-versal.
                By \Cref{lem.versality2},
                \begin{equation} \label{e.at-p} \ed_k(X) \geqslant \ed_k(\tau_{K^{(q)}}) = \ed_{k,q}(\tau) = \ed_{k, q}(Y), \end{equation}
                where the first equality is~\eqref{e.ed-at-p2} and the second equality is~\eqref{e.torsor-variety-at-p}.
                Moreover, suppose $\pi \colon X' \dasharrow X$ is a $G$-equivariant dominant rational map of $G$-varieties of degree prime to $q$.
                Viewing the inverse of $\pi$ as a correspondence $\pi^{-1} \colon X \rightsquigarrow X'$ and composing it with $f$, we obtain 
                a correspondence $\pi^{-1} \circ f \colon Y \rightsquigarrow X'$ of degree prime to $q$. By~\eqref{e.at-p}, we get 
                \[ \ed_k(X') \geqslant \ed_{k, q}(Y). \]
                Taking the minimum over all $G$-equivariant rational covers $X' \dasharrow X$ of degree prime to $q$ and using~\eqref{e.ed-at-p1},
                we arrive at the desired inequality $\ed_{k, q}(X) \geqslant \ed_{k, q}(Y)$.
        \end{proof}

     Using \Cref{thm.map-at-p}, we can deduce prime-to-$q$ analogues of all of the corollaries and examples in \Cref{sect.cor}.
        Note however, that \Cref{cor.weakly-versal} is less novel in this context. In particular, by~\cite[Theorem~8.3]{duncan2015versality} 
        a weakly $q$-versal smooth $G$-variety $X$ is $q$-versal. If $X$ is generically free and primitive, this implies $\ed_{k, q}(X) = \ed_{k, q}(G)$. 
        The case where $X$ has a fixed point is considered in \cite[Corollary 8.6]{duncan2015versality}.
        The equality $\ed_k(X) = \ed_k(G)$ of \Cref{cor.weakly-versal} is more intricate, because $X$ may not be versal. 
        
        On the other hand, to the best of our knowledge, the following prime-to-$q$ analogue of \Cref{cor.product} is new. 
        It is proved by the same argument as \Cref{cor.product}, with \Cref{thm.map-at-p} used in place of \Cref{thm.map}.
        
        \begin{cor} \label{cor.product-at-p} Let $X$ be $Y$ be $G$-varieties. 
                Assume that $Y$ is generically free and primitive, and $X$ is smooth and absolutely irreducible. 
                If there exists a $G$-equivariant correspondence $f \colon Y \rightsquigarrow X$ of degree prime to $q$, then
                $\ed_{k, q}(X \times_k Y) = \ed_{k, q}(Y)$.
                \qed
        \end{cor}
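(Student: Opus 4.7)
The plan is to mirror the proof of \Cref{cor.product}, with \Cref{thm.map-at-p} invoked in place of \Cref{thm.map}. The key observation is that the product $X \times_k Y$ connects to $Y$ via two $G$-equivariant correspondences of degree prime to $q$: the correspondence $f \times \id$ going from $Y$ to $X \times_k Y$, and the second projection going the other way.

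As a preliminary step, I would replace $Y$ with its dense $G$-invariant smooth locus. This is harmless because essential dimension at $q$ of a generically free primitive $G$-variety is a birational invariant, so neither $\ed_{k,q}(Y)$ nor $\ed_{k,q}(X \times_k Y)$ changes under this replacement (here one also uses that $X$ is smooth, so $X \times_k Y_{\on{sm}}$ is open and dense in $X \times_k Y$, and both are smooth). Primitivity and generic freeness of $X \times_k Y$ follow from the corresponding properties of $Y$ together with absolute irreducibility of $X$.

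Next I would apply \Cref{thm.map-at-p} to the two correspondences described above. For the first, the $G$-equivariant correspondence $f \times \id \colon Y \rightsquigarrow X \times_k Y$ has degree equal to that of $f$, hence prime to $q$, and its target $X \times_k Y$ is smooth; so \Cref{thm.map-at-p} yields $\ed_{k,q}(X \times_k Y) \geqslant \ed_{k,q}(Y)$. For the second, the projection $\text{pr}_2 \colon X \times_k Y \to Y$ is a dominant morphism, hence a correspondence of degree $1$, and its target $Y$ is now smooth; \Cref{thm.map-at-p} then yields the reverse inequality $\ed_{k,q}(Y) \geqslant \ed_{k,q}(X \times_k Y)$. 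Combining the two completes the proof.

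The main obstacle would be ensuring the smoothness hypothesis of \Cref{thm.map-at-p} is met in both applications, but this is addressed by the preliminary reduction to smooth $Y$. Once this reduction is in place, the argument is essentially a direct double application of \Cref{thm.map-at-p}, and no further ideas are required beyond those already developed earlier in the paper.
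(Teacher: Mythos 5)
Your proposal follows exactly the paper's route: the paper proves this corollary by the same two-correspondence argument (apply \Cref{thm.map-at-p} to $f\times\mathrm{id}\colon Y\rightsquigarrow X\times_k Y$ and to $\mathrm{pr}_2\colon X\times_k Y\to Y$), verbatim as in the proof of \Cref{cor.product}. Your preliminary pass to the smooth locus of $Y$ is a sensible precaution about the smoothness of the target $X\times_k Y$ (a point the paper glosses over), but note two small caveats: density of the smooth locus requires $Y$ to be geometrically reduced (automatic over a perfect field, not for an arbitrary reduced $k$-variety), and for the dominant correspondence $\mathrm{pr}_2$ the inequality $\ed_{k,q}(Y)\geqslant\ed_{k,q}(X\times_k Y)$ already follows from the definition of $\ed_{k,q}$ by composing dominant correspondences, so no smoothness of $Y$ is needed there.
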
       

\appendix

\section*{Appendix A. Essential dimension at a prime}
\addcontentsline{toc}{section}{Appendix A. Essential dimension at a prime}
\refstepcounter{section}        
        
        In this appendix we will prove a variant of Theorem~\ref{puiseux}, where essential dimension is replaced by essential dimension at a fixed prime $q$.
        Note that conditions (i), (ii) and (iii) of \Cref{puiseux} simplify in this setting; they  are replaced by conditions (i') and (ii') below.

        \begin{thm}\label{puiseux'}
                Let $A$ be a complete discrete valuation ring with maximal ideal $\mathfrak{m}$, fraction field $k$ and residue field $k_0$. Set $p := \on{char}(k_0) \geqslant 0$, $q\neq p$ be a prime number, and let $G$ be a smooth affine group scheme over $A$, satisfying one of the conditions (i') or (ii') below.
                
                Let $R\supset A$ be a complete discrete valuation ring with fraction field $K\supset k$ and residue field $K_0\supset k_0$, and assume that $\mathfrak{m}R$ is the maximal ideal of $R$. Then for every $\alpha\in H^1(R,G)$ we have \[\ed_{k_0,q}(\alpha_{K_0};G_{K_0})\leqslant \ed_{k,q}(\alpha_{K};G_{K}).\]
                Furthermore, if $A=k_0[[t]]$ and $G_{A}$ is defined over $k_0$, then the above inequality is an equality.
                
        \begin{enumerate}[label=(\roman*')]
                        \item $\on{char}(k_0)=0$, and there exist a section $\sigma:k_0\to A$ of the projection $A\to k_0$ and a $k_0$-group $H$ such that $G\simeq \sigma^*H$;
                        \item $G^{\circ}$ is reductive, there exists a finite subgroup $S\subset G(A)$ of order invertible in $k_0$ such that for every $q$-closed field $L$ containing $k$ the natural map $H^1(L,S)\to H^1(L,G)$ is surjective.
                \end{enumerate} 
        \end{thm}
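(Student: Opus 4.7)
The plan is to adapt the proof of \Cref{puiseux}(i)--(ii) from \Cref{sect.puiseux(i)-(ii)} by systematically passing to $q$-closures. First, applying \Cref{e=1}, I would reduce to the case $\mathfrak{m}R=\mathfrak{m}_R$. By~\eqref{e.ed-at-p2} one has $\ed_{k,q}(\alpha_K)=\ed_k(\alpha_{K^{(q)}})$, so there exists an intermediate field $k\subset F\subset K^{(q)}$ with $\trdeg_k(F)=\ed_{k,q}(\alpha_K)$ to which $\alpha_{K^{(q)}}$ descends. Writing $K^{(q)}$ as a directed union of finite prime-to-$q$ extensions of $K$ and using that $\ed_k$ takes integer values, I can find such a finite prime-to-$q$ extension $L/K$ containing $F$ with $\alpha_L$ descending to $F$ and $\ed_k(\alpha_L)=\ed_k(\alpha_{K^{(q)}})$. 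Let $R_L$ denote the integral closure of $R$ in $L$; this is a complete discrete valuation ring whose residue field $L_0$ is a finite prime-to-$q$ extension of $K_0$, so $L_0\subset K_0^{(q)}$. Restricting the valuation on $L$ to $F$ yields a valuation ring $O\subset F$ with residue field $F_0\subset L_0$, and $\trdeg_{k_0}(F_0)\leqslant\trdeg_k(F)$ by \cite[Lemma 7.1]{reichstein2020essential}. It then suffices to show that $\alpha_{L_0}$ descends to $F_0$, since this gives descent of $\alpha_{K_0^{(q)}}$ to $F_0$ via $L_0\hookrightarrow K_0^{(q)}$ and yields the chain
\[\ed_{k_0,q}(\alpha_{K_0})\leqslant \trdeg_{k_0}(F_0)\leqslant \trdeg_k(F)=\ed_{k,q}(\alpha_K).\]

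Under hypothesis (i'), I would observe that $G_{R_L}$ satisfies hypothesis (i) of \Cref{puiseux} directly: since $\on{char}(k_0)=0$, Cohen's structure theorem extends the section $\sigma\colon k_0\to A$ to a compatible section $L_0\to R_L$, through which $G_{R_L}$ is identified with the base change to $L_0$ of the $k_0$-group $H$ provided by (i'). Applying \Cref{puiseux}(i) to $R_L$ then yields the desired descent of $\alpha_{L_0}$ to $F_0$.

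Under hypothesis (ii'), one cannot appeal to \Cref{puiseux}(ii) directly, because (ii') only guarantees surjectivity of $H^1(L',S)\to H^1(L',G)$ for $q$-closed fields $L'$ containing $k$. The plan is to replay the argument of \Cref{sect.puiseux(i)-(ii)} with every occurrence of the field $F(\pi^{1/\infty})$ replaced by its $q$-closure $F(\pi_L^{1/\infty})^{(q)}$, where $\pi_L$ is a uniformizer of $R_L$. Since $L(\pi_L^{1/\infty})^{(q)}$ is $q$-closed and contains $k$, hypothesis (ii') supplies the crucial surjectivity $H^1\bigl(L(\pi_L^{1/\infty})^{(q)},S\bigr)\twoheadrightarrow H^1\bigl(L(\pi_L^{1/\infty})^{(q)},G\bigr)$. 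What remains is a prime-to-$q$ variant of \Cref{grothendieck-serre}(d)(ii), namely the bijectivity of the natural map $H^1(R_L,G)\to H^1\bigl(L(\pi_L^{1/\infty})^{(q)},G\bigr)$. The essential Galois-theoretic input is that by \Cref{big-diag} one has $\on{Gal}(L(\pi_L^{1/\infty}))\simeq P\rtimes\Gamma$ with $P$ pro-$p$ and $\Gamma\simeq\on{Gal}(L_0)$; since $p\neq q$ the pro-$q$ Sylow subgroup of $\on{Gal}(L(\pi_L^{1/\infty}))$ meets $P$ trivially, so the identification $\on{Gal}\bigl(L(\pi_L^{1/\infty})^{(q)}\bigr)\simeq \on{Gal}(L_0^{(q)})$ holds, and Claims~\ref{claim1}--\ref{claim3} of \Cref{sect.laurent} can be adapted accordingly.

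Once the prime-to-$q$ Grothendieck--Serre bijection is established, the diagram chase of \Cref{sect.puiseux(i)-(ii)}---carried out on the analogous diagram built from $H^1(\hat{O},G)$, $H^1(R_L,G)$ and the $q$-closed fields on the right---produces the descent of $\alpha_{L_0}$ to $F_0$, which completes the inequality. The equality case, when $A=k_0[[t]]$ and $G$ is defined over $k_0$, follows from \Cref{lem.prel1}(a) applied at the level of $q$-closures, exactly as at the end of \Cref{sect.puiseux(i)-(ii)}. The main obstacle is the prime-to-$q$ variant of \Cref{grothendieck-serre}(d)(ii): adapting the cocycle-level arguments of Claims~\ref{claim1}--\ref{claim3} to the $q$-closure, so that hypothesis (ii')---which is genuinely weaker than (ii)---still suffices to conclude bijectivity of $H^1(R_L,G)\to H^1\bigl(L(\pi_L^{1/\infty})^{(q)},G\bigr)$.
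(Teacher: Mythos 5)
Your skeleton follows the paper's: reduce to the totally ramified case via \Cref{e=1}, pass to a finite prime-to-$q$ extension $L/K$ realizing $\ed_{k,q}(\alpha_K)$, restrict the valuation to the field of definition $F$, bound $\trdeg_{k_0}F_0$ via \cite[Lemma 7.1]{reichstein2020essential}, and transfer the descent to residue fields through a Grothendieck--Serre type statement. Case (i') is fine; indeed, since condition (i) of \Cref{puiseux} only concerns $G$ over $A$, one can apply \Cref{puiseux}(i) directly to $A\subset R_L$ and then use that $L_0/K_0$ has degree prime to $q$.

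The gap is in case (ii'): the lemma you propose as the "remaining" step --- bijectivity of $H^1(R_L,G)\to H^1\bigl(L(\pi_L^{1/\infty})^{(q)},G\bigr)$ --- is false, and already injectivity fails. By \Cref{grothendieck-serre}(a) the source is $H^1(L_0,G)$, while your own identification $\on{Gal}\bigl(L(\pi_L^{1/\infty})^{(q)}\bigr)\simeq\on{Gal}\bigl(L_0^{(q)}\bigr)$ shows the target only sees a $q$-Sylow subgroup of $\on{Gal}(L_0)$. Taking $G=S=\Z/\ell$ with $\ell\neq p,q$ (which satisfies (ii')), the target is $\on{Hom}$ from a pro-$q$ group to $\Z/\ell$, hence zero, while the source $\on{Hom}(\on{Gal}(L_0),\Z/\ell)$ is generally nonzero; surjectivity likewise fails, since a homomorphism defined on a $q$-Sylow of $\on{Gal}(L_0)$ need not extend. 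Without injectivity the diagram chase cannot pull $\alpha$ back to $R_L$ and push it to $L_0$. The repair --- which is what the paper's Appendix A actually does --- is to keep the ramified tower $L(\pi_L^{1/n})$ totally ramified, so that \Cref{grothendieck-serre}(b),(c) give injectivity with unchanged residue field, and to invoke (ii') over the $q$-closure only to produce a reduction of structure to $S$, which then descends to a finite prime-to-$q$ extension $M$ of some $L(\pi_L^{1/n})$; one must then track the residue field of the DVR in $M$, a further finite prime-to-$q$ extension of $L_0$, and conclude only that $\alpha$ descends to a field of transcendence degree at most $\trdeg_k F$ after a prime-to-$q$ residue extension, which still suffices for $\ed_{k_0,q}$. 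This valuation-theoretic bookkeeping (the fields $K'$, $K''_n$ and the valuation with value group $\frac{1}{d}\Z$) is exactly the complication the paper flags, and it is the piece your proposal replaces with a false statement.
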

        
        Note that if $S$ is the finite group in (ii') and $S_q$ is the Sylow $q$-subgroup of $S$, then $H^1(L, S_q) \to H^1(L, S)$ is an isomorphism
        for every $q$-closed field $L$. Hence, we may replace $S$ by $S_q$ in (ii'). In other words, if the finite subgroup group $S$ of $G(A)$
        in (ii') exists, we may assume that $S$ is a $q$-group.
        
                Our proof of \Cref{puiseux'} below is analogous to that of \Cref{puiseux} in~\Cref{sect.puiseux(i)-(ii)}. However, since the definition of $\ed_{k,q}(\alpha_{K};G_{K})$ allows replacing $K$ by a finite extension $L/K$ of degree prime to $q$ (and similarly for $\ed_{k_0,q}(\alpha_{K_0};G_{K_0})$)  
                there are some complications involving extensions of valuations from $K$ to $L$.

        \begin{proof}[Proof of \Cref{puiseux'}] 
     Let $\pi\in A$ be a uniformizer. Using~\Cref{e=1}, as we did at the beginning of the proof of \Cref{puiseux} in Section~\ref{sect.puiseux(i)-(ii)}, we may assume that $\mathfrak{m}R$ is the maximal ideal of $R$, that is, that $\pi$ is also a uniformizer in $A$. Let $K'/K$ be a finite extension of degree prime to $q$ and $k\subset F\subset K'$ be a field of definition for $\alpha_{K'}$ such that
                \begin{equation}\label{e.ed-alphat'}
                        \ed_{k,q}\alpha_K=\trdeg_{k}F.
                \end{equation}
                Let $\displaystyle v \colon (K')^{\times}\to \dfrac{1}{d}\Z$
                be the unique surjective valuation extending the given valuation on $K$. Let $R'\subset K'$ be the local ring and $K'_0$ the residue field of $v$. Since $q$ does not divide $[K':K]$, it does not divide $d$. 
                Moreover, since $F$ contains $k$ and $\pi\in A\subset k$, we have
                \[\Z\subset v({F^{\times}})\subset \frac{1}{d}\Z.\] 
                Let $O\subset F$ be the valuation ring and $F_0\subset K_0'$ the residue field of $v|_{F^{\times}}$. The inclusion $A\subset O$ of valuation rings induces an inclusion $k_0\subset F_0$ of residue fields. By \cite[Lemma 7.1]{reichstein2020essential}, we have
                \begin{equation} \label{e.residue-field'} \trdeg_{k}F\geqslant \trdeg_{k_0}F_0.
                \end{equation}
                Let $\hat{R}$ be the $\pi$-adic completion of $R$, and let $\hat{F}$ be the fraction field of $\hat{R}$, that is, the completion of $F$ as a valued field. Since $K'$ is complete, the universal property of the completion gives a unique field embedding $\hat{F}\hookrightarrow K'$ extending the inclusion $F\subset K'$. We view $\hat{F}$ as a subfield of $K'$ via this embedding, so that $F\subset \hat{F}\subset K'$, $O\subset \hat{O}\subset R'$, and the residue field of $\hat{F}$ is $F_0$.

                If $L$ is a field containing $k$ and $n\geqslant1$ is an integer invertible in $k$, we write $L_n$ for the \'etale $L$-algebra $L[x]/(x^n-\pi)$. Then $L_n$ factors as a product of finite-dimensional separable field extensions of $L$, each of which contains an $n^\mathrm{th}$ root of $\pi$. Moreover, if $L$ is the fraction field of a complete discrete valuation ring with uniformizer $\pi$, then by Eisenstein's Criterion $L_n$ is a field. Thus $k_n$, $\hat{F}_n$, and $K_n$ are fields, while $K_n'$ is not necessarily a field. Since $k_n$ is a field, the inclusion $k_n\subset K_n'$ factors through a unique field factor $K_n''\subset K_n'$. Thus $k_n$, $\hat{F}_n$ and $K_n$ are all contained in $K_n''$.
                
                We have a commutative diagram 
                \[
                \begin{tikzcd}
                        F_0 \arrow[d,hook] & \arrow[l]  \hat{O} \arrow[d,hook]\arrow[r,hook] &   \arrow[r,hook] \hat{F} \arrow[d,hook] & \hat{F}_n \arrow[d, hook]  \\
                        K_0' & \arrow[l] R' \arrow[r,hook] & K' \arrow[r,hook] & K''_n \\
                        K_0 \arrow[u, hook]  & \arrow[l]  R \arrow[u, hook] \arrow[r,hook]  & K \arrow[u, hook] \arrow[r, hook]  & K_n. \arrow[u, hook]
                \end{tikzcd}
                \]
                Passing to Galois cohomology, we obtain the following commutative diagram:
                \begin{equation}\label{key-diag'}
                        \begin{tikzcd}
                                H^1(F_0,G) \arrow[d] & \arrow[l,swap, "\sim"]  H^1(\hat{O},G) \arrow[d]\arrow[r,hook] & H^1(\hat{F}_n,G) \arrow[d]  \\
                                H^1(K'_0,G)  & \arrow[l,swap, "\sim"] H^1(R',G)  \arrow[r,hook] & H^1(K''_n,G)\\
                                H^1(K_0,G) \arrow[u] & \arrow[l,swap,"\sim"] H^1(R,G) \arrow[r,hook] \arrow[u]  & H^1(K_n,G),\arrow[u] 
                        \end{tikzcd}
                \end{equation}
                where the horizontal maps on the left are isomorphisms by \Cref{grothendieck-serre}(a). 
                
                For the injectivity of the horizontal maps on the right side of (\ref{key-diag'}), we argue as follows. Let $L$ be a field containing $k$, $n\geqslant1$ be an integer invertible in $k$, $\cl{L}$ be an algebraic closure of $L$, and $L'_n$ be a field factor of $L_n$. Let $\pi^{1/n}\in \cl{L}$ be such that $(\pi^{1/n})^n=\pi$, and consider the homomorphism $\phi_n \colon L_n\to \cl{L}$ induced by $x\mapsto \pi^{1/n}$. The restriction of $\phi_n$ to $L'_n$ is a field embedding $L'_n\hookrightarrow\cl{L}$ which factors through $L(\pi^{1/n})$. By \Cref{system}(b), we can extend $\pi^{1/n}$ to a system $\set{\pi^{1/m}}_{m\geqslant 1}$ of roots of $\pi$ in $\cl{L}$ such that $(\pi^{1/mm'})^m=\pi^{1/m'}$ for every $m, m'\geqslant 1$.  Letting \[L(\pi^{1/\infty}):=\bigcup_{m\geqslant 1} \, L(\pi^{1/m})\subset \cl{L},\]
                we have the inclusions \[L\subset L'_n\subset L(\pi^{1/\infty}).\]
                Letting $L$ be $\hat{F}$, $K_n'$ or $K_n$, we deduce from \Cref{grothendieck-serre}(c) that the horizontal maps on the right are injective. 
                
                Since $\alpha_{K'}$ is defined over $F$, it is defined over $\hat{F}$. We let $\beta \in H^1(\hat{F},G)$ be such that $\beta_{K'}=\alpha_{K'}$. We now observe that $\beta_{\hat{F}(\pi^{1/\infty})}$ is defined over $\hat{O}$. Indeed, if (i') holds then this follows from \cite[Proposition 5.4]{florence2006points}. If (ii') holds then, letting $S$ be a finite subgroup of $G(A)$ as in (ii'), $\beta_{\hat{F}(\pi^{1/\infty})}$ admits reduction of structure to some $\gamma \in H^1(\hat{F}(\pi^{1/\infty}),S)$. Now \Cref{claim3} shows that $\gamma$ is defined over $\hat{O}$, hence so is $\beta_{\hat{F}(\pi^{1/\infty})}$. The field $\hat{F}(\pi^{1/\infty})$ is the increasing union of the $\hat{F}_n$, hence there exists some $n\geqslant 1$ such that $\beta_{\hat{F}_n}$ is defined over $\hat{O}$. Since $\beta_{K''}=\alpha_{K''}$, we have $\beta_{K''_n}=\alpha_{K''_n}$, hence $\alpha_{K''_n}$ is also defined over $\hat{O}$.
                From the commutativity of the right-hand side of (\ref{key-diag'}) and the injectivity of the map $H^1(R',G)\to H^1(K_n'',G)$, we deduce that $\alpha_{R'}$ is defined over $\hat{O}$. The commutativity of the left-hand side of (\ref{key-diag'}) now implies that $\alpha_{K'_0}$ is defined over $F_0$. Since $[K'_0:K_0]$ divides $d$, it is not divisible by $q$. Thus \[\trdeg_{k_0}F_0\geqslant \ed_{k_0,q}\alpha_{K_0}.\]
                Combining this with \eqref{e.ed-alphat'} and \eqref{e.residue-field'}, we conclude that 
                \[\ed_{k,q}\alpha_K= \trdeg_{k}F\geqslant \trdeg_{k_0}F_0 \geqslant\ed_{k_0,q}\alpha_{K_0}.\]
                
                Suppose now that $A=k_0[[t]]$ and that $G_{A}$ is defined over $k_0$. Let $K^{(q)}$ be a $q$-closure of $K$, and let $K_0^{(q)}$ be a $q$-closure of $K_0$ contained in $K^{(q)}$. By (\ref{e.ed-at-p2}), we have $\ed_{k,q}\alpha_K=\ed_{k}\alpha_{K^{(q)}}$ and $\ed_{k_0,q}\alpha_{K_0}=\ed_{k_0}\alpha_{K_0^{(q)}}$. Now \Cref{lem.prel1}(a) implies that $\ed_{k, q}\alpha_K\leqslant\ed_{k_0, q}\alpha_{K_0}$. Therefore $\ed_{k, q}\alpha_K=\ed_{k_0, q}\alpha_{K_0}$, completing the proof of \Cref{puiseux'}.
                \end{proof}
        
        \begin{rmk}
        Recall that in \Cref{cor.not-complete} we proved a version of \Cref{puiseux}, where we did not assume that $A$ and $R$ were complete. One may also prove a variant of \Cref{puiseux'} without assuming that $A$ and $R$ are complete. If $\hat{A}$ denotes the $\mathfrak{m}$-adic completion of $A$, the correct formulation of this variant is obtained by replacing $A$ by $\hat{A}$ in the sentence beginning with ``Furthermore,'' and in assumptions  (i') and (ii'). The proof is identical to the argument used to deduce~\Cref{cor.not-complete} from \Cref{puiseux}, and is left to the reader.
        \end{rmk}

        \begin{rmk}
    Let $G$ be a linear algebraic group over a field $k_0$ of positive characteristic. Then $G_{k_0[[t]]}$ does not satisfy Assumption (ii') of \Cref{puiseux'} in general. Thus~\Cref{puiseux'} is not strong enough to recover~\Cref{thm.map-at-p} 
    in positive characteristic. We will use \Cref{puiseux'} in~\cite{gabber3} to prove prime-to-$q$ analogues of the main results there.
        \end{rmk}

\section*{Appendix B. Cohomological invariants}
\addcontentsline{toc}{section}{Appendix B. Cohomological invariants}
\refstepcounter{section}

Let $k$ be a field of arbitrary characteristic, $G$ be a linear algebraic $k$-group, $X$ be a generically free primitive $G$-variety, and $n\geqslant 0$ be an integer. Consider the following properties of $X$:
    \begin{enumerate}[label=(\roman*)]
        \item $\ed_k(X;G)\geqslant n$.
        \item There exists a cohomological invariant of degree $\geqslant n$ for $G$ which does not vanish on the class of $X$ in $H^1(k(X)^G,G)$.
    \end{enumerate}
It is well known that (ii) implies (i). While (i) and (ii) are not equivalent, they are analogous to each other. 

The purpose of this section is to state and prove an analogue of \Cref{thm.map} in the context of cohomological invariants, 
\Cref{thm.map-cohinv} below. 
Our proof of~\Cref{thm.map-cohinv} is easier than the argument in Section~\ref{sect.proof} and goes through in arbitrary characteristic.

Let $\on{Fields}_k$ be the category of fields containing $k$, $\on{Ab}$ be the category of Abelian groups,
$H:\on{Fields}_k\to \on{Ab}$ be a covariant functor. Following A.~S.~Merkurjev, we will require $H$ to satisfy
the following condition:
\begin{equation}\label{condition-star}
    \text{The homomorphism $H(L)\to H(L((t)))$ is injective $\forall$ field extension $L/k$.}
\end{equation}
This is condition $(*)$ of~\cite[p.~108]{garibaldi2003cohomological}. In particular, it is satisfied by the Galois cohomology functor
$K \mapsto H^{d+1}(K, \mathbb Q/ \mathbb Z(d))$ for every $d \geqslant 0$.
Let $G$ be a linear algebraic group over $k$, and let $\on{Inv}(G,H)$ be the set of invariants of $G$ with values in $H$, 
that is, natural transformations $H^1(-,G)\to \on{Forget}\circ H$, where $\on{Forget}:\on{Ab}\to \on{Set}$ is the forgetful functor. 
The set $\on{Inv}(G,H)$ has the structure of an Abelian group.

If $Z$ is a primitive generically free $G$-variety, let $K = k(Z)^G$ and $\tau_Z \colon \mathcal{T} \to \Spec(K)$ be the $G$-torsor constructed in
\Cref{sect.ed-variety}. Let $[Z]\in H^1(K,G)$ denote the class of $\tau_Z$.

\begin{prop} \label{thm.map-cohinv} Let $k$ be a field, let $G$ be a smooth linear algebraic group over $k$, and let $X$, $Y$ be primitive generically free $G$-varieties defined over $k$. Assume that $X$ is smooth, and that there exists a $G$-equivariant rational map $f \colon Y \dasharrow X$. Let $H$ be a functor satisfying \eqref{condition-star}, and let $\alpha,\beta\in \on{Inv}(G,H)$. If $\alpha([X])=\beta([X])$, then $\alpha([Y])=\beta([Y])$.
\end{prop}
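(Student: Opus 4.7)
The plan is to prove an invariant-theoretic analogue of \Cref{lem.versality2}, following the same pattern as the proof of \Cref{thm.map}. Let $K := k(Y)^G$ and let $\tau := \tau_Y \colon \mathcal{T} \to \Spec(K)$ be the generic $G$-torsor of $Y$ constructed in~\Cref{sect.ed-variety}, so that $[Y] = [\tau]$; we wish to show $\alpha([\tau]) = \beta([\tau])$ in $H(K)$. Setting $L := K((t))$, condition $(*)$ tells us that $H(K) \to H(L)$ is injective, so it is enough to prove $\alpha([\tau_L]) = \beta([\tau_L])$, where $\tau_L := \tau \otimes_K L$. The rational map $f \colon Y \dasharrow X$ yields a $G$-equivariant $K$-morphism $\mathcal{T} \to X_K$; base-changing to $L$ shows that $X_L$ is weakly $\tau_L$-versal. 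Because $X_L$ is smooth and $L$ is a large field, \Cref{lem.versality1}(b) promotes this to full $\tau_L$-versality of $X_L$.

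Now, following the proof of \Cref{lem.versality2}, shrink $X$ to a $G$-invariant dense open $U \subseteq X$ on which $G$ acts freely and which admits a geometric quotient $\pi \colon U \to B$ with $B$ a smooth $k$-variety; the generic fiber of $\pi$ represents $[X] \in H^1(k(B), G)$. Applying $\tau_L$-versality to $U_L \subset X_L$ produces a $G$-equivariant $L$-morphism $\mathcal{T}_L \to U_L$ that descends to an $L$-point $\Spec L \to B_L$. Let $b$ be the image of this point in $B_{k((t))}$; its residue field $k(b)$ embeds in $L$, and a standard pull-back chase identifies $[\tau_L]$ with the base-change from $k(b)$ to $L$ of the class $[\pi_{k((t))}]_b \in H^1(k(b), G)$ obtained by restricting the $G$-torsor $\pi_{k((t))}$ to $\Spec k(b)$.

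It therefore remains to show $\alpha([\pi_{k((t))}]_b) = \beta([\pi_{k((t))}]_b)$; by functoriality of invariants along $k(b) \hookrightarrow L$ this will imply the equality on $[\tau_L]$. From the hypothesis $\alpha([X]) = \beta([X])$ and functoriality along $k(X)^G \hookrightarrow k(B_{k((t))})$ we obtain $\alpha([X_{k((t))}]) = \beta([X_{k((t))}])$, that is, the equality at the generic point $\eta$ of $B_{k((t))}$. Since the $G$-torsor $\pi_{k((t))}$ extends across all of the smooth variety $B_{k((t))}$, a standard iterated specialization argument --- decomposing the specialization $\eta \leadsto b$ through codimension-one smooth subvarieties, and invoking at each step the equicharacteristic Cohen structure theorem together with the injection $H(F_0) \hookrightarrow H(F_0((\pi)))$ provided by $(*)$ (which is valid because $G$ is smooth, so the Henselian Grothendieck--Serre identification $H^1(F_0[[\pi]], G) \simeq H^1(F_0, G)$ holds) --- transports the equality at $\eta$ down to $b$. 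The main technical obstacle is precisely this last specialization step, which is where the smoothness of $X$ (and hence of $B$) is indispensable; for $H$ of the form $H^{d+1}(-,\Q/\Z(d))$ it is the classical unramified-cohomology formalism, and for general $H$ satisfying $(*)$ the same argument goes through, cf.~\cite[Part~1]{garibaldi2003cohomological}.
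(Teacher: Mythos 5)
Your proposal is correct and follows essentially the same route as the paper: reduce to the torsor $\tau_Y$ over $K((t))$ via condition $(*)$, use largeness of $K((t))$ and smoothness of $X$ to upgrade weak versality to versality, land on a point $b$ of the smooth base $B$, and transport the vanishing of $\alpha-\beta$ from the generic point of $B$ down to $b$ along a chain of codimension-one regular specializations. The only difference is that the paper packages your final specialization step (Cohen structure theorem, Henselian lifting for smooth $G$, and injectivity of $H(F_0)\to H(F_0((\pi)))$) as an iterated application of \cite[Part 2, Lemma 3.2]{garibaldi2003cohomological}, which is exactly the argument you sketch.
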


\begin{proof}
    Replacing $\alpha$ and $\beta$ by $\alpha-\beta$ and $0$, respectively, we may assume that $\beta=0$.
    
Let $K = k(Y)^G$ and $\tau_Y \colon T \to \Spec(K)$ be the $G$-torsor constructed in \Cref{sect.ed-variety}.
    Set $X_1 := X_{k((t))}$, as in Section~\ref{sect.proof}. 
    Since $\alpha([X]) = 0$, clearly $\alpha([X_1]) = 0$.
    Set $\tau_Y((t)):=\tau_Y \times_{\Spec(K)}\Spec \big( K((t)) \big)$. The same argument as in Section~\ref{sect.proof} tells us that $X_1$ is $\tau_Y((t))$-versal.
    
Let $U \subset X_1$ be a dense open subscheme which is the total space of a $G$-torsor $\pi \colon U \to B$. Recall that $\tau_{X_1}$ 
is the generic fiber of $\pi$.
Since  $X_1$ is $\tau_Y$-versal, there is a morphism of $G$-torsors $\mathcal{T}_Y \to U$ defined over $k$. 
Let $x\in B$ be the image of the induced morphism $\Spec K((t))\to B$. (The point $x$ is not necessarily closed.) 
Since $U$ and $G$ are smooth, so is $B$. It follows that the stalk $\mc{O}_{B,x}$ is a regular local ring. Choose
a local system of parameters $u_1,\ldots,u_n$ in the maximal ideal of $\mc{O}_{B,x}$. For $0\leqslant i\leqslant n$, 
let $x_i$ be the generic point of the variety $B_i \subset B$ defined by $u_1=\cdots=u_i=0$ (we only consider 
the irreducible component containing $x$). In particular, $B_0 = B$.
Then for every $i = 1, \ldots, n$ the point $x_{i+1}$ is regular of codimension $1$ in the closure of $x_i$. 
Moreover, $x_0$ is the generic point of $B$, and $x_n=x$. By assumption, $\alpha([\pi^{-1}(x_0)])=0$. 
Applying~\cite[Part 2, Lemma 3.2]{garibaldi2003cohomological} iteratively, we 
conclude that $\alpha([\pi^{-1}(x_i)])=0$ for all $i$. In particular, $\alpha([\pi^{-1}(x)])=0$. 
Since $\tau_Y((t))$ is a pullback of $\pi^{-1}(x)$, we deduce that $\alpha([Y_{K((t))}])=0$. 
Since $H$ satisfies (\ref{condition-star}), we conclude that $\alpha([Y])=0$, as desired. 
\end{proof}



\begin{thebibliography}{CGR06+++}

\bibitem[Alp14]{alper2014adequate}
J. Alper,
\emph{Adequate moduli spaces and geometrically reductive group schemes},
Algebr. Geom. {\bf 1} (2014), no.~4, 489--531.

\bibitem[BF03]{berhuy2003essential}
G. Berhuy and G. Favi,
\emph{Essential dimension: a functorial point of view (after {A}.
  {M}erkurjev)},
Doc. Math. {\bf 8} (2003), 279--330.

\bibitem[Bre21]{bresciani}
G. Bresciani, 
\emph{Essential dimension and pro-finite group schemes}, 
Ann. Sc. Norm. Super. Pisa Cl. Sci. (5) {\bf 22} (2021), no.~4, 1899--1936.

\bibitem[BV22]{bresciani-vistoli}
G. Bresciani and A. Vistoli,
\newblock The genericity theorem for the essential dimension of tame stacks,
\newblock {\em Pure Appl. Math. Q.
Journal~}, 18, No. 4, 1365-1377, 2022.

\bibitem[BRV07]{brosnan2007essential}
P. Brosnan, Z. Reichstein, and A. Vistoli,
\emph{Essential dimension and algebraic stacks}, preprint
\arXiv{0701903} (2007).

\bibitem[BRV18]{brosnan2018essential}
\bysame,
\emph{Essential dimension in mixed characteristic},
Doc. Math. {\bf 23} (2018), 1587--1600.

\bibitem[BT84]{bruhat1984groupes}
F. Bruhat and J. Tits,
\emph{Groupes r\'eductifs sur un corps local. II. Sch\'emas en groupes. Existence d'une donn\'e radicielle valu\'e},
Inst. Hautes \'Etudes Sci. Publ. Math. \textbf{60} (1984), 197--376. 

\bibitem[BR97]{buhler1997essential}
J. Buhler and Z. Reichstein,
\emph{On the essential dimension of a finite group},
Compositio Math. {\bf 106} (1997), no.~2, 159--179.

\bibitem[CGR06]{chernousov2006resolving}
V. Chernousov, P. Gille, and Z. Reichstein,
\emph{Resolving $G$-torsors by abelian base extensions}, 
J. Algebra {\bf 296} (2006), no.~2, 561--581.


\bibitem[CGR08]{chernousov2008reduction}
\bysame,
\emph{Reduction of structure for torsors over semilocal rings},
Manuscripta Math. {\bf 126} (2008), no.~4, 465--480.

\bibitem[CT00]{colliot-annals}
J.-L. Colliot-Th\'{e}l\`ene,
\emph{Rational connectedness and {G}alois covers of the projective line},
Ann. of Math. (2) {\bf 151} (2000), no.~1, 359--373.

\bibitem[DR15]{duncan2015versality}
A. Duncan and Z. Reichstein,
\emph{Versality of algebraic group actions and rational points on twisted
  varieties} (with an appendix containing a letter from J.-P.~Serre),
J. Algebraic Geom. {\bf 24} (2015), no.~3, 499--530.

\bibitem[FS21]{fakhruddin2021finite}
N. Fakhruddin and R. Saini,
\emph{Finite groups scheme actions and incompressibility of Galois covers:
  beyond the ordinary case},
Doc. Math. 27, 151-182 (2022). 

\bibitem[Flo06]{florence2006points}
M.~Florence,
\emph{Points rationnels sur les espaces homog\`enes et leurs
  compactifications},
Transform. Groups {\bf 11} (2006), no.~2, 161--176.

\bibitem[GMS03]{garibaldi2003cohomological}
S. Garibaldi, A. Merkurjev, and J-P. Serre,
\emph{Cohomological invariants in {G}alois cohomology},
University Lecture Series, vol. 28,
American Mathematical Society, Providence, RI, 2003.





\bibitem[GP11a]{SGA3I}
P. Gille and P. Polo (editors),
\emph{Sch\'{e}mas en groupes ({SGA} 3). {T}ome {I}.
  {P}ropri\'{e}t\'{e}s g\'{e}n\'{e}rales des sch\'{e}mas en groupes},
  S\'{e}minaire de G\'{e}om\'{e}trie Alg\'{e}brique du Bois Marie
  1962--64. A seminar
  directed by M. Demazure and A. Grothendieck with the collaboration of M.
  Artin, J.-E. Bertin, P. Gabriel, M. Raynaud and J-P. Serre (revised and
  annotated edition of the 1970 French original).
  Documents Math\'{e}matiques (Paris), vol.~7,
Soci\'{e}t\'{e} Math\'{e}matique de France, Paris, 2011.


\bibitem[GP11b]{SGA3III}
\bysame,
\emph{Sch\'{e}mas en groupes ({SGA} 3). {T}ome {III}. {S}tructure des
  sch\'{e}mas en groupes r\'{e}ductifs}, 
S\'{e}minaire de G\'{e}om\'{e}trie Alg\'{e}brique du Bois Marie
  1962--64. A seminar
  directed by M. Demazure and A. Grothendieck with the collaboration of M.
  Artin, J.-E. Bertin, P. Gabriel, M. Raynaud and J-P. Serre (revised and
  annotated edition of the 1970 French original).
  Documents
  Math\'{e}matiques (Paris), vol.~8,
Soci\'{e}t\'{e} Math\'{e}matique de France, Paris, 2011.

\bibitem[Lan02]{lang2002algebra}
S. Lang,
\emph{Algebra}, Graduate Texts in Mathematics,
vol. 211, Springer-Verlag, New York, 2002.

\bibitem[Led04]{ledet-p}
A. Ledet,
\emph{On the essential dimension of {$p$}-groups}.
In: \emph{Galois theory and modular forms}, 
Dev. Math., vol. 11, pp. 159--172, Kluwer Acad. Publ., Boston, MA, 2004.

\bibitem[Led07]{ledet-ed1}
\bysame,
\emph{Finite groups of essential dimension one},
J. Algebra {\bf 311} (2007), no.~1, 31--37.

\bibitem[Liu02]{liu2002algebraic}
Q. Liu,
\emph{Algebraic geometry and arithmetic curves} (translated from the French by R.~Ern\'{e}), Oxford Science Publications, Oxford Graduate Texts in Mathematics, vol. 6, Oxford University Press, Oxford, 2002.


\bibitem[Mer13]{merkurjev2013essential}
A.~S. Merkurjev,
\emph{Essential dimension: a survey},
Transform. Groups {\bf 18} (2013), no.~2, 415--481.

\bibitem[Nis84]{nisnevich1984espaces}
Y.~A. Nisnevich,
\emph{Espaces homog\`enes principaux rationnellement triviaux et
  arithm\'{e}tique des sch\'{e}mas en groupes r\'{e}ductifs sur les anneaux de
  {D}edekind},
C. R. Acad. Sci. Paris S\'{e}r. I Math. {\bf 299} (1984), no.~1, 5--8.

\bibitem[Pop14]{pop-large}
F. Pop,
\emph{Little survey on large fields---old \& new}, in: \emph{Valuation theory in interaction}, pp. 432--463, EMS Ser. Congr. Rep., Eur. Math. Soc., Z\"{u}rich, 2014.

\bibitem[Rei10]{reichstein2010essential}
Z. Reichstein,
\emph{Essential dimension}, in: \emph{Proceedings of the International Congress of Mathematicians}, Volume II, 162--188, Hindustan Book Agency, New Delhi, 2010. 

\bibitem[RS21a]{gabber3}
Z. Reichstein and F. Scavia,
\emph{The behavior of essential dimension under specialization. II},
preprint (2021).

\bibitem[RS21b]{reichstein2020essential}
\bysame,
\emph{Essential dimension of extensions of finite groups by tori},
Algebr. Geom. {\bf 8} (2021), no.~6, 749--769.

\bibitem[RV18]{reichstein-vistoli-prime}
Z. Reichstein and A. Vistoli,
\emph{Essential dimension of finite groups in prime characteristic},
C. R. Math. Acad. Sci. Paris {\bf 356} (2018), no.~5, 463--467.

\bibitem[Ser97]{serre1997galois}
J-P. Serre,
\emph{Galois cohomology} (translated from the French by Patrick Ion and revised by the author), Springer-Verlag, Berlin, 1997.

\bibitem[Ser03]{serre-ci}
\bysame,
\emph{Cohomological invariants, {W}itt invariants, and trace forms} (notes by S.~Garibaldi), in: \emph{Cohomological invariants in {G}alois cohomology}, Univ. Lecture Ser., vol.~28, pp. 1--100, Amer. Math. Soc., Providence, RI, 2003.

\bibitem[Stacks]{stacks-project}
The {Stacks Project Authors},
\emph{The Stacks Project},
\url{http://stacks.math.columbia.edu}

\bibitem[Tos17]{tossici2017essential}
D. Tossici,
\emph{Essential dimension of group schemes over a local scheme},
J. Algebra {\bf 492} (2017), 1--27.

\bibitem[Tos19]{tossici-unipotent}
\bysame,
\emph{Essential dimension of inifinitesimal commutative unipotent group schemes}, Boll. Unione Mat. Ital. {\bf 12} (2019), no.~4, 575--581.

\end{thebibliography}
\end{document}